\definecolor{darkblue}{rgb}{0.0, 0.0, 0.45}
\newtheorem{theorem}{\bf Theorem}
\newtheorem{corollary}{\bf Corollary}
\newtheorem{remark}{\bf Remark}
\newtheorem{lemma}{\bf Lemma}
\def\QED{~\rule[-1pt]{5pt}{5pt}\par\medskip}
\newenvironment{proof}{{\bf Proof: \ }}{ \hfill \QED}
\newcommand{\bx}{{\bf x}}
\newcommand{\by}{{\bf y}}
\newcommand{\bz}{{\bf z}}
\newcommand{\bu}{{\bf u}}
\newcommand{\bv}{{\bf v}}
\newcommand{\bw}{{\bf w}}
\newcommand{\bg}{{\bf g}}
\newcommand{\ba}{{\bf a}}
\newcommand{\bpsi}{\boldsymbol\psi}
\newcommand{\PP}{\mathbb{P}}
\newcommand{\GG}{\mathbb{G}}
\newcommand{\QQ}{\mathbb{Q}}
\newcommand{\LL}{\mathbb{L}}
\newcommand{\diff}{\mathrm{d}}
\definecolor{darkblue}{rgb}{0, 0, 0.8}
\newcommand{\upd}{}
\newcommand*{\LONGVERSION}{}
\newcommand*{\DOUBLECOLUMN}{}
\title{
LQG Control with Minimum Directed Information: \\
Semidefinite Programming Approach 
}
\author{Takashi Tanaka$^{1}$ \and \hspace{3ex} Peyman Mohajerin Esfahani$^{2}$ \and \hspace{3ex} Sanjoy K. Mitter$^{3}$
\thanks{$^{1}$TT is with the Department of Aerospace Engineering and Engineering Mechanics at the University of Texas at Austin, USA.
        {\tt\small ttanaka@utexas.edu;}$^{2}$PME is with the Delft Center for Systems and Control at the Delft University of Technology, Netherlands.
        {\tt\small P.MohajerinEsfahani@tudelft.nl;}$^{3}$ SM is with the Laboratory for Information and Decision Systems, Massachusetts Institute of Technology, USA.
        {\tt\small mitter@mit.edu.}}%
}
\begin{document}

\maketitle

\begin{abstract}
We consider a discrete-time Linear-Quadratic-Gaussian (LQG) control problem in which Massey's directed information from the observed output of the plant to the control input is minimized while required control performance is attainable. This problem arises in several different contexts, including joint encoder and controller design for data-rate minimization in networked control systems. 
We show that the optimal control law is a Linear-Gaussian randomized policy. We also identify the state space realization of the optimal policy, which can be synthesized by an efficient algorithm based on semidefinite programming.
Our structural result indicates that the filter-controller separation principle from the LQG control theory, and the sensor-filter separation principle from the zero-delay rate-distortion theory for Gauss-Markov sources hold simultaneously in the considered problem.
A connection to the data-rate theorem for mean-square stability by Nair \& Evans is also established.
\end{abstract}


\begin{IEEEkeywords}
Control over communications; Kalman filtering; LMIs; Stochastic optimal control; Communication Networks
\end{IEEEkeywords}

\section{Introduction}
There is a fundamental trade-off between the best achievable control performance and the data-rate at which plant information is fed back to the controller.
Studies of such a trade-off hinge upon analytical tools developed at the interface between traditional feedback control theory and Shannon's information theory.
Although the interface field has been significantly expanded by the surged research activities on \emph{networked control systems (NCS)} over the last two decades \cite{nair2007, hespanha2007survey, baillieul2007, yuksel2013stochastic,you2015analysis}, many important questions concerning the rate-performance trade-off studies are yet to be answered.

A central research topic in the NCS literature has been the stabilizability of a linear dynamical system using a rate-constrained feedback \cite{TatikondaThesis,baillieul2002feedback,hespanha2002towards, nair2004stabilizability}. The critical data-rate below which stability cannot be attained by any feedback law has been extensively studied in various NCS setups. 
As pointed out by \cite{nair2004topological}, many results including \cite{TatikondaThesis,baillieul2002feedback,hespanha2002towards, nair2004stabilizability} share the same conclusion that this critical data-rate is characterized by an intrinsic property of the open-loop system known as topological entropy, which is determined by the unstable open-loop poles. This result holds irrespective of different definitions of the  ``data-rate''  considered in these papers. For instance, in \cite{nair2004stabilizability} the data-rate is defined as the log-cardinality of channel alphabet, while in \cite{hespanha2002towards}, it is the frequency of the use of noiseless binary channel.

As a natural next step, the rate-performance trade-offs are of great interest from both theoretical and practical perspectives.
The trade-off between Linear-Quadratic-Gaussian (LQG) performance and the required data-rate has attracted attention in the literature
\cite{borkar1997lqg,tatikonda2004,matveev2004problem,charalambous2008,fu2009linear,
you2011linear,freudenberg2011,bao2011iterative,yuksel2014jointly,huang2005finite,lemmon2006performance,
silva2011framework,silva2011achievable,silva2013characterization}.
Generalized interpretations of the classical Bode's integral also provide fundamental performance limitations of closed-loop systems in the information-theoretic terms \cite{iglesias2001analogue,zang2003nonlinear,elia2004bode,martins2008}.
However, the rate-performance trade-off analysis introduces additional challenges that were not present through the lens of the stability analysis.
First, it is largely unknown whether different definitions of the data-rate  considered in the literature listed above lead to different conclusions.  
This issue is less visible in the stability analysis, since the critical data-rate for stability turns out to be invariant across several different definitions of the data-rate \cite{TatikondaThesis,baillieul2002feedback,hespanha2002towards, nair2004stabilizability}.
Second, for many operationally meaningful definitions of the data-rate considered in the literature, computation of the rate-performance trade-off function involves intractable optimization problems (e.g., dynamic programming \cite{lemmon2006performance} and iterative algorithm \cite{bao2011iterative}), and trade-off achieving controller/encoder policies are difficult to obtain. This is not only inconvenient in practice, but also makes theoretical analyses difficult.

In this paper, we study the information-theoretic requirements for LQG control using the notion of \emph{directed information} \cite{marko1973bidirectional, massey1990causality, kramer2003capacity}.
In particular, we define the rate-performance trade-off function as the minimal directed information from the observed output of the plant to the control input, optimized over the space of causal decision policies that achieve the desired level of LQG control performance.
Among many possible definitions of the ``data-rate'' 
 as mentioned earlier, we focus on  directed information for the following reasons. 

{\upd
First, directed information (or related quantity known as \emph{transfer entropy}) is a widely used \emph{causality measure} in science and engineering  \cite{gourieroux1987kullback,amblard2011directed,jiao2015justification}.
Applications include communication theory (e.g., the analysis of channels with feedback), portfolio theory, neuroscience, social science, macroeconomics, statistical mechanics, and potentially more.
Since it is natural to measure
the ``data-rate''  in networked control systems by a causality measure from the observation to action, directed information is a natural option.}

Second, it is recently reported by Silva et al. \cite{silva2011framework,silva2011achievable,silva2013characterization} that directed information has an important operational meaning in a practical NCS setup.
Starting from an LQG control problem over a noiseless binary channel with prefix-free codewords, they show that the directed information obtained by solving the aforementioned optimization problem provides a tight lower bound for the minimum data-rate (defined operationally) required to achieve the desired level of control performance.

\subsection{Contributions of this paper}

The central question in this paper is the characterization of the most ``data-frugal'' LQG controller that minimizes directed information of interest among all decision policies achieving a given LQG control performance.
In this paper, we make the following contributions. 
\begin{enumerate}[label=(\roman*), itemsep = 0mm, topsep = 0mm] 
\item \label{cnt:structure}  In a general setting including MIMO, time-varying, and partially observable plants, we identify the structure of an optimal decision policy in a state space model.
\item \label{cnt:computation} Based on the above structural result, we further develop a tractable optimization-based framework to synthesize the optimal decision policy.  
\item \label{cnt:stationary} In the stationary setting with MIMO plants, we show how our proposed computational framework, as a special case, recovers the existing data-rate theorem for mean-square stability.
\end{enumerate}

Concerning \ref{cnt:structure}, we start with general time-varying, MIMO, and fully observable plants.
We emphasize that the optimal decision policy in this context involves two important tasks: (1) the  \emph{sensing} task, indicating which state information of the plant should be dynamically measured with what precision, and  (2) the \emph{control} task, synthesizing an appropriate control action given available sensing information. To this end, we first show that the optimal policy that minimizes directed information from the state to the control sequences under the LQG control performance constraint is linear. 
In this vein, we illustrate that the optimal policy can be realized by a three-stage architecture comprising linear sensor with additive Gaussian noise, Kalman filter, and certainty equivalence controller (Theorem~\ref{maintheorem}). We then show how this result can be extended to partially observed plants (Theorem~\ref{theorempartially}).

Regarding \ref{cnt:computation}, 
we provide a semidefinite programming (SDP) framework characterizing the optimal policy proposed in step \ref{cnt:structure} (Sections~\ref{secmainresult} and \ref{secpo}). As a result, 
we obtain a computationally accessible form  of the considered rate-performance trade-off functions. 

Finally, as highlighted in \ref{cnt:stationary}, we analyze the horizontal asymptote of the considered rate-performance trade-off function for MIMO time-invariant plants (Theorem~\ref{theostationary}), which coincides with the critical data-rate identified by  Nair and Evans \cite{nair2004stabilizability} (Corollary~\ref{cordatarate}).

\subsection{Organization of this paper}
The rest of this paper is organized as follows.
After some notational remarks, the problem considered in this paper is formally introduced in Section~\ref{secformulation}, and its operational interpretation is provided in Section~\ref{secmotivation}. Main results are summarized in Section~\ref{secmainresult}, where connections to the existing results are also explained in detail.  Section~\ref{secexample} contains a simple numerical example, and the derivation of the main results is presented in  Section~\ref{secderivation}. 
The results are extended to partially observable plants in Section~\ref{secpo}.
We conclude in Section~\ref{secconclusion}.

\subsection{Notational remarks}
Throughout this paper, random variables are  denoted by lower case bold symbols such as $\bx$.  Calligraphic symbols such as $\mathcal{X}$ are used to denote sets, and $x\in\mathcal{X}$ is an element.  We denote by $x^t$ a sequence $x_1,x_2,...,x_t$, and $\bx^t$ and $\mathcal{X}^t$ are understood similarly. All random variables in this paper are Euclidean valued, and is measurable with respect to the usual topology. A probability distribution of $\bx$ is demoted by $\PP_\bx$. A Gaussian distribution with mean $\mu$ and covariance $\Sigma$ is denoted by $\mathcal{N}(\mu,\Sigma)$.
The relative entropy of $\QQ$ from $\PP$ is a non-negative quantity defined by
\[
D(\PP \| \QQ )\triangleq\begin{cases}
\int \log_2 \frac{\diff \PP(x)}{\diff \QQ(x)}\diff \PP(x) &\text{if } \PP \ll \QQ \\
+\infty & \text{otherwise}
\end{cases}
\]
where $\PP\ll \QQ$ means that $\PP$ is absolutely continuous with respect to $\QQ$, and $\frac{\diff \PP(x)}{\diff \QQ(x)}$ denotes the Radon-Nikodym derivative.  The mutual information between $\bx$ and $\by$ is defined by
$I(\bx;\by)\triangleq D(\PP_{\bx,\by}\|\PP_\bx\otimes \PP_\by)$, where $\PP_{\bx,\by}$ and $\PP_\bx\otimes \PP_\by$ are joint and product probability measures respectively. 
The entropy of a discrete random variable $\bx$ with probability mass function $\PP(x_i)$ is defined by $H(\bx)\triangleq -\sum_i \PP(x_i)\log_2 \PP(x_i)$.

\section{Problem Formulation}
\label{secformulation}

\ifdefined\DOUBLECOLUMN
		\begin{figure}[t]
		\centering
		\includegraphics[width=0.65\columnwidth]{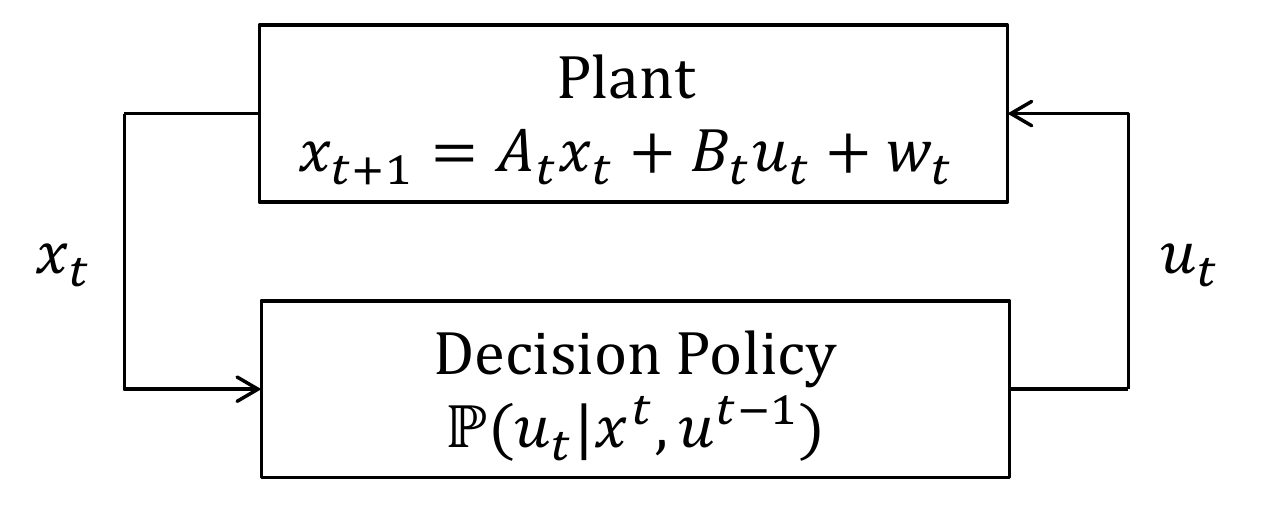}
		\caption{LQG control of fully observable plant with minimum directed information.}
		\label{fig:mainprob}
		\vspace{-3ex}
	\end{figure}
\fi
\ifdefined\SINGLECOLUMN
		\begin{figure}[t]
		\centering
		\includegraphics[width=0.5\columnwidth]{fullyobservable2.pdf}
		\caption{LQG control of fully observable plant with minimum directed information.}
		\label{fig:mainprob}
		\vspace{-2ex}
	\end{figure}
\fi
	
Consider a linear time-varying stochastic plant
	\begin{align}
	\label{eqsystem}
	\bx_{t+1}=A_t\bx_t+B_t\bu_t+\bw_t, \quad t=1,\cdots, T,
	\end{align} 
	where $\bx_t$ is an $\mathbb{R}^n$-valued  state of the plant, and $\bu_t$ is the control input. 
	We assume that initial state $\bx_1\sim\mathcal{N}(0, P_{1|0}), P_{1|0}\succ 0$ and noise process  $\bw_t\sim\mathcal{N}(0, W_t)$, $W_t \succ 0, t=1,...,T$ are mutually independent.
	
	The design objective is to synthesize a decision policy that ``consumes" the least amount of information among all policies achieving the required LQG control performance  (Figure~\ref{fig:mainprob}). Specifically, let  $\Gamma$ be the space of decision policies, i.e., the space of sequences of  Borel measurable stochastic kernels  \cite{bertsekas1978stochastic} 
	\[
	 \PP(u^T || x^T) \triangleq \{ \PP(u_t|x^t,u^{t-1})\}_{t=1,...,T}.
	\]
	 A decision policy $\gamma \in \Gamma$ is evaluated by two criteria:
	\begin{enumerate}[label=(\roman*), itemsep = 1mm, topsep = 1mm]
		\item the LQG control cost
		\begin{equation}
		\label{deflqgcost}
		J(\bx^{T+1},\bu^T) \triangleq \sum\nolimits_{t=1}^T\!\mathbb{E}\left( \|\bx_{t+1}\|_{Q_t}^2\!+\|\bu_t\|_{R_t}^2\right);
		\end{equation}
		\item and  \emph{directed information}
		\begin{equation}
		\label{defdirectinfo}
		I(\bx^T\rightarrow \bu^T)\triangleq \sum\nolimits_{t=1}^T I(\bx^t;\bu_t|\bu^{t-1}).
		\end{equation}
	\end{enumerate}
	The right hand side of \eqref{deflqgcost} and \eqref{defdirectinfo} are evaluated with respect to the joint probability measure induced by the state space model (\ref{eqsystem}) and a decision policy $\gamma$. 
	In what follows, we often write \eqref{deflqgcost} and \eqref{defdirectinfo} as $J_\gamma$ and $I_\gamma$ to indicate their dependency on $\gamma$.
	The main problem studied in this paper is formulated as 
	\begin{subequations}
		\label{mainprob}
		\begin{align}
		\mathsf{DI}_T(D)\triangleq \min_{\gamma\in\Gamma} & \quad  I_\gamma(\bx^T\rightarrow \bu^T) \label{objdi}\\
		\text{ s.t. } &\quad  J_\gamma(\bx^{T+1},\bu^T) \leq D,
		\end{align}
	\end{subequations}
	where $D > 0$ is the desired LQG control performance.

	Directed information \eqref{defdirectinfo} can be interpreted as the information flow from the state random variable $\bx_t$ to the control random variable $\bu_t$.  The following equality called  {\em conservation of information} \cite{massey2005conservation} shows a connection between directed information and the standard mutual information:
	\[ I(\bx^T;\bu^T)=I(\bx^T\rightarrow \bu^T)+I(\bu_+^{T-1}\rightarrow \bx^T).\]
Here, the sequence $\bu_+^{T-1}=(0,\bu_1,\bu_2,\cdots, \bu_{T-1})$ denotes an index-shifted version of $\bu^T$. Intuitively, this equality shows that the standard mutual information can be written as a sum of two directed information terms corresponding to feedback (through decision policy) and feedforward (through plant) information flows. Thus \eqref{mainprob} is interpreted as the minimum information that must ``flow" through the decision policy to achieve the LQG control performance $D$.
	
	We also consider time-invariant and infinite-horizon LQG control problems. 
	Consider a time-invariant plant
	\begin{equation}
	\label{eqltiplant}
	\bx_{t+1}=A\bx_t+B\bu_t+\bw_t, \;\; t\in\mathbb{N}
	\end{equation}
with $\bw_t\sim\mathcal{N}(0,W)$, and assume $Q_t=Q$ and $R_t=R$ for $t\in\mathbb{N}$. We also assume   $(A,B)$ is stabilizable, $(A,Q)$ is detectable, and $R\succ 0$. 
Let $\Gamma$ be the space of Borel-measurable stochastic kernels $\PP(u^\infty || x^\infty)$.
The problem of interest is
\begin{subequations}
\label{stationaryprob}
\begin{align}
\mathsf{DI}(D)\triangleq \min_{\gamma\in\Gamma} & \quad \limsup_{T\rightarrow \infty} \tfrac{1}{T} I_\gamma(\bx^T\rightarrow \bu^T) \label{objdi2}\\
\text{ s.t. } &\quad \limsup_{T\rightarrow \infty} \tfrac{1}{T} J_\gamma(\bx^{T+1},\bu^T) \leq D.
\end{align}
\end{subequations}
More general problem formulations with partially observable plants will be discussed in Section~\ref{secpo}.

\section{Operational meaning}
\label{secmotivation}

In this section, we revisit a networked LQG control problem considered in \cite{silva2011framework,silva2013characterization,silva2011achievable}.
Here we consider time-invariant  MIMO plants while  \cite{silva2011framework,silva2013characterization,silva2011achievable} focus on SISO plants. For simplicity, we consider fully observable plants only.
Consider a feedback control system in Figure~\ref{fig:com}, where the state information is encoded by the ``sensor + encoder'' block and is transmitted to the controller over a noiseless binary channel.  For each $t=1, ... ,T$, let 
	$ \mathcal{A}_t \subset \{0,1,00,01,10,11,000, \cdots\} $
be a set of uniquely decodable variable-length codewords \cite[Ch.5]{CoverThomas}.  Assume that codewords are generated by a causal policy
\[  \PP(a^\infty || x^\infty) \triangleq \{\PP(a_t|x^t,a^{t-1})\}_{t=1,2,...}. \]
The ``{decoder + controller}" block interprets codewords and computes control input according to a causal policy
\[  \PP(u^\infty || a^\infty) \triangleq \{\PP(u_t|a^t,u^{t-1})\}_{t=1,2,...}. \]
The length of a  codeword $\ba_t \in \mathcal{A}_t$ is denoted by a random variable $l_t$. Let $\Gamma'$ be the space of triplets $\{\PP(a^\infty || x^\infty),  \mathcal{A}^\infty,  \PP(u^\infty || a^\infty)\}$.
Introduce a quadratic control cost
\[
J (\bx^{T+1},\bu^T) \triangleq \sum\nolimits_{t=1}^T\!\mathbb{E}\left( \|\bx_{t+1}\|_{Q}^2\!+\|\bu_t\|_{R}^2\right) \]
with $Q \succ 0$ and $R \succ 0$. We are interested in a design $\gamma'\in\Gamma'$ that minimizes data-rate among those attaining control cost smaller than $D$.
Formally, the problem is formulated as
	\begin{subequations}
		\label{quantizedLQG}
		\begin{align}
		\mathsf{R}(D)\triangleq \min_{\gamma' \in \Gamma'} & \quad   \limsup_{T\rightarrow +\infty}\frac{1}{T} \sum\nolimits_{t=1}^T \mathbb{E}(l_t) \label{objbits}\\
		\text{ s.t. } &\quad  \limsup_{T\rightarrow +\infty}\frac{1}{T}  J (\bx^{T+1},\bu^T) \leq D.
		\end{align}
	\end{subequations}
It is difficult to evaluate $\mathsf{R}(D)$ directly since (\ref{quantizedLQG}) is a highly complex optimization problem. Nevertheless, Silva et al. \cite{silva2011framework} observed that $\mathsf{R}(D)$ is closely related to $\mathsf{DI}(D)$ defined by \eqref{stationaryprob}. The following result is due to \cite{extendedversion}.
\begin{equation}
\label{eqoperationalmeaning}
\mathsf{DI}(D) \leq \mathsf{R}(D) < \mathsf{DI}(D)+\frac{r}{2}\log\frac{4\pi e}{12}+1\;\;  \forall D >0.
\end{equation}
Here, $r$ is an integer no greater than the state space dimension of the plant.\footnote{More precisely, $r$ is the rank of the optimal signal-to-noise ratio matrix obtained by semidefinite programming, as will be clear in Section~\ref{secmainlti}.} The following inequality plays an important role to prove \eqref{eqoperationalmeaning}.
	\begin{lemma}
		\label{lemmadirectedinfo}
		Consider a control system (\ref{eqsystem}) with a decision policy $\gamma'\in\Gamma'$. Then, we have an inequality
\[
		I(\bx^T\rightarrow \bu^T) \leq I(\bx^T\rightarrow \ba^T\|\bu_+^{T-1}), 
\]
		
where the right hand side is Kramer's notation \cite{kramer2003capacity} for causally conditioned directed information
$\sum_{t=1}^T I(\bx^t;\ba_t|\ba^{t-1}, \bu^{t-1})$.
	\end{lemma}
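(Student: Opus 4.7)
The plan is to exploit two conditional-independence properties of the networked architecture. The controller policy $\PP(u_t|a^t,u^{t-1})$ implies $\bu_t\perp\bx^t\mid(\ba^t,\bu^{t-1})$, i.e., $\bu_t$ sees $\bx^t$ only through the codeword sequence. The plant update $\bx_{t+1}=A_t\bx_t+B_t\bu_t+\bw_t$, together with the independence of $\bw_t$ from everything generated up to time $t$, implies $\bx_t\perp\ba^{t-1}\mid(\bx^{t-1},\bu^{t-1})$: the next state is driven only by the past state, past control, and a fresh noise, so past codewords carry no additional information once $(\bx^{t-1},\bu^{t-1})$ is given.

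First I would rewrite the right-hand side as the directed information into the joint sink $(\ba^T,\bu^T)$. By the directed-information chain rule,
\[
I(\bx^T\to(\ba^T,\bu^T))=\sum_{t=1}^{T}\!\bigl[I(\bx^t;\ba_t|\ba^{t-1},\bu^{t-1})+I(\bx^t;\bu_t|\ba^t,\bu^{t-1})\bigr],
\]
and the first Markov property annihilates the second summand, leaving $I(\bx^T\to(\ba^T,\bu^T))=I(\bx^T\to\ba^T\|\bu_+^{T-1})$, which is exactly the right-hand side of the lemma.

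Next I would form the per-step gap against $I(\bx^T\to\bu^T)$. Expanding the two directed-information summands as differences of conditional entropies of $\bx^t$ and simplifying, the $t$-th term reduces to
\[
I(\bx^t;\ba_t,\bu_t|\ba^{t-1},\bu^{t-1})-I(\bx^t;\bu_t|\bu^{t-1})=I(\bx^t;\ba^t|\bu^t)-I(\bx^t;\ba^{t-1}|\bu^{t-1}).
\]
Here the second Markov property enters: it yields $I(\bx^t;\ba^{t-1}|\bu^{t-1})=I(\bx^{t-1};\ba^{t-1}|\bu^{t-1})$, so the sum telescopes and collapses to $I(\bx^T;\ba^T|\bu^T)\geq 0$. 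Combining this with the previous step gives
\[
I(\bx^T\to\ba^T\|\bu_+^{T-1})-I(\bx^T\to\bu^T)=I(\bx^T;\ba^T|\bu^T)\geq 0,
\]
which is the claimed inequality, together with an explicit expression for the slack.

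The only bit of bookkeeping that requires care is the paper's shifted convention for causally-conditioned directed information, which places $\bu^{t-1}$ (rather than $\bu^t$) in the $t$-th summand; once this convention is matched with the correct chain-rule ordering, the argument is purely algebraic and I do not foresee a substantive obstacle.
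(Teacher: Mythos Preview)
Your proposal is correct and follows essentially the same route as the paper's proof: you invoke the same two Markov chains (decoder structure $\bx^t\text{--}(\ba^t,\bu^{t-1})\text{--}\bu_t$ and plant structure $\ba^{t-1}\text{--}(\bx^{t-1},\bu^{t-1})\text{--}\bx_t$), obtain the same per-step identity $I(\bx^t;\ba_t,\bu_t|\ba^{t-1},\bu^{t-1})-I(\bx^t;\bu_t|\bu^{t-1})=I(\bx^t;\ba^t|\bu^t)-I(\bx^{t-1};\ba^{t-1}|\bu^{t-1})$, and telescope to the same slack $I(\bx^T;\ba^T|\bu^T)\geq 0$. Your preliminary reinterpretation of the right-hand side as $I(\bx^T\to(\ba^T,\bu^T))$ is a pleasant framing but does not alter the substance of the argument.
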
	
	\begin{proof}
		See Appendix~\ref{adddataprocessing}. 
	\end{proof}	
	Lemma~\ref{lemmadirectedinfo} can be thought of as a generalization of the standard data-processing inequality. It is different from the directed data-processing inequality in \cite[Lemma 4.8.1]{TatikondaThesis} since the source $\bx_t$ is affected by feedback. See also \cite{derpich2013fundamental} for relevant inequalities involving directed information. 

\ifdefined\DOUBLECOLUMN
	\begin{figure}[t]
		\centering
		\includegraphics[width=0.8\columnwidth]{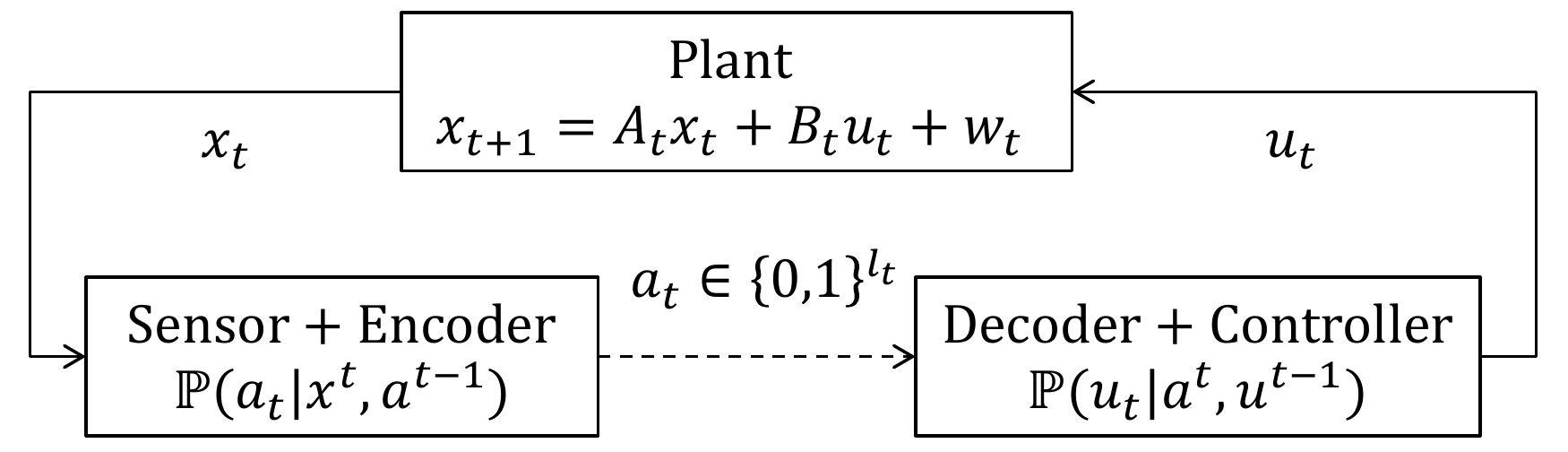}
		\caption{LQG control over noiseless binary channel. }
		\label{fig:com}
		\vspace{-3ex}
	\end{figure}
\fi
\ifdefined\SINGLECOLUMN
	\begin{figure}[t]
		\centering
		\includegraphics[width=0.6\columnwidth]{comm3.pdf}
		\caption{LQG control over noiseless binary channel. }
		\label{fig:com}
	\end{figure}
\fi
	
Now, the first inequality in \eqref{eqoperationalmeaning} can be directly verified  as	
	\begin{subequations}
		\label{directedinfolowerbound}
		\begin{align}
		&I(\bx^T\rightarrow \bu^T) \\
		\leq&\sum\nolimits_{t=1}^T I(\bx^t;\ba_t|\ba^{t-1},\bu^{t-1})  \\
		=&\sum\nolimits_{t=1}^T \left(H(\ba_t|\ba^{t-1},\bu^{t-1})-H(\ba_t|\bx^t,\ba^{t-1},\bu^{t-1})\right) \\
		\leq &\sum\nolimits_{t=1}^T H(\ba_t|\ba^{t-1},\bu^{t-1}) \\
		\leq &\sum\nolimits_{t=1}^T H(\ba_t)  \\
		\leq &\sum\nolimits_{t=1}^T \mathbb{E}(l_t). 
		\end{align}
	\end{subequations}
	Lemma \ref{lemmadirectedinfo} is used in the first step. The last step follows from the fact that expected codeword length of of uniquely decodable codes is lower bounded by its entropy \cite[Theorem 5.3.1]{CoverThomas}.
	
	Proving the second inequality in \eqref{eqoperationalmeaning} requires a key technique proposed in \cite{silva2011framework} involving the construction of dithered uniform quantizer \cite{zamir1992universal}. Detailed discussion is available in \cite{extendedversion}.

\section{Main Result}
\label{secmainresult}
In this section we present the main results of this article. For the clarity of the presentation, this section is only devoted to a setting with full state measurements and shows how the main objective of control synthesis can be achieved by a three-step procedure. We shall later discuss in Section~\ref{secpo} in regard to an extension to partial observable systems.

\subsection{Time-varying plants}
We show that the optimal solution to  (\ref{mainprob}) can be realized by the following three data-processing components as shown in Figure~\ref{fig:sep}.
\begin{itemize}[leftmargin=3ex]
\item[1.] A linear sensor mechanism 
\begin{equation}
\label{eqvirtualsensor}
\by_t=C_t \bx_t+\bv_t, \;\; \bv_t\sim\mathcal{N}(0,V_t),  \;\; V_t \succ 0
\end{equation} 
where $\bv_t, t=1, ... , T$ are mutually independent.
\item[2.] The Kalman filter computing $\hat{\bx}_t=\mathbb{E}(\bx_t|\by^t,\bu^{t-1})$.
\item[3.] The certainty equivalence controller $\bu_t=K_t\hat{\bx}_t$.
\end{itemize}

\ifdefined\DOUBLECOLUMN
\begin{figure}[t]
    \centering
    \includegraphics[width=0.8\columnwidth]{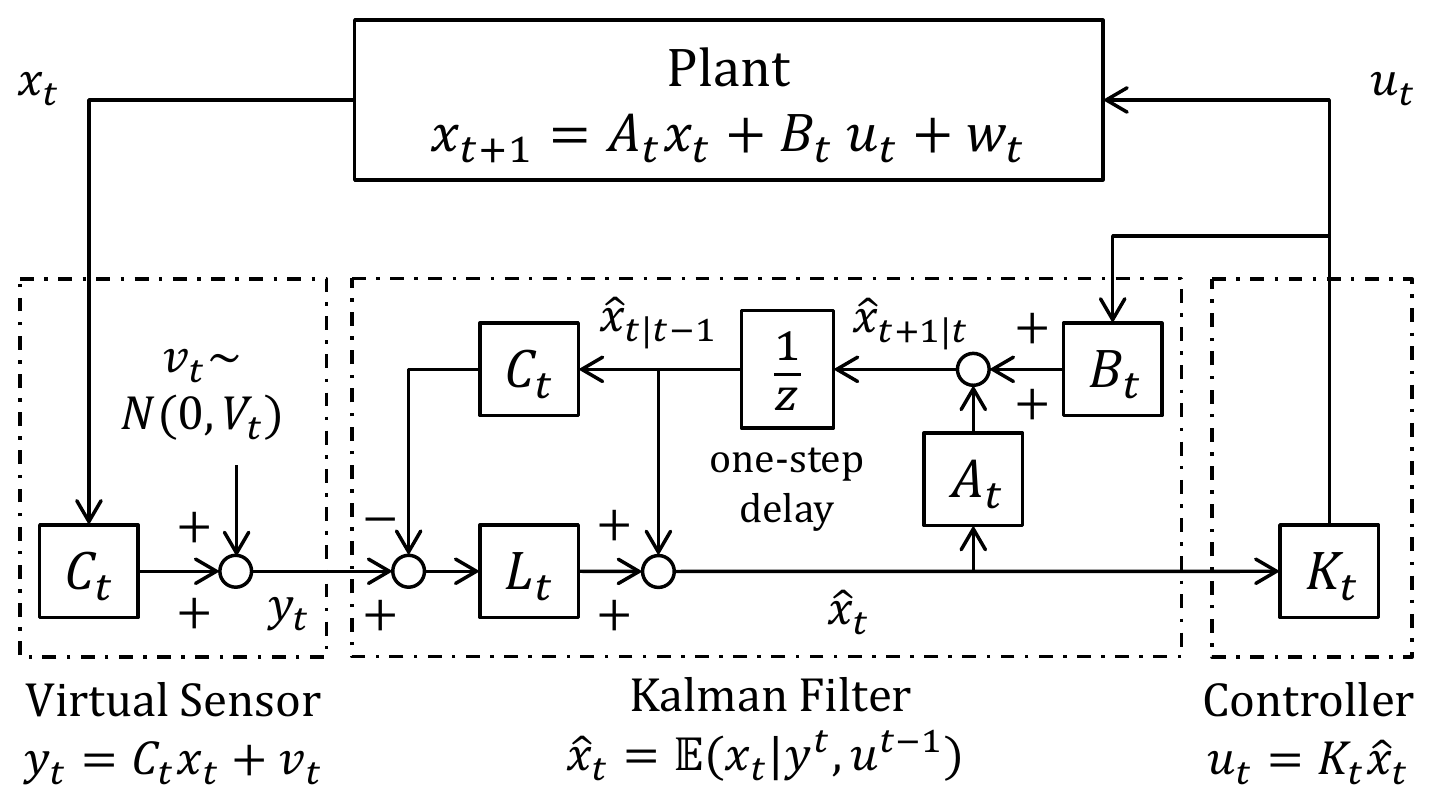}
    \caption{Structure of optimal control policy for problem \eqref{mainprob}. Matrices $C_t$, $V_t$, $L_t$ and $K_t$ are determined by SDP-based algorithm in Section~\ref{secmainresult}.}
    \label{fig:sep}
    \vspace{-3ex}
\end{figure}
\fi
\ifdefined\SINGLECOLUMN
\begin{figure}[t]
    \centering
    \includegraphics[width=0.6\columnwidth]{threestage_detail.pdf}
    \caption{Structure of optimal control policy for problem \eqref{mainprob}. Matrices $C_t$, $V_t$, $L_t$ and $K_t$ are determined by SDP-based algorithm in Section~\ref{secmainresult}.}
    \label{fig:sep}
\end{figure}
\fi

The role of the mechanism \eqref{eqvirtualsensor} is noteworthy. Recall that in the current problem setting in Figure 1, the state vector $\bx_t$ is directly observable by the decision policy.
The purpose of introducing an artificial mechanism \eqref{eqvirtualsensor} is to reduce data ``consumed'' by the decision policy while desired control performance is still attainable.  
Intuitively, the optimal mechanism \eqref{eqvirtualsensor} acquires just enough information from the state vector $\bx_t$ for control purposes and discards less important information.
Since the importance of information is a task-dependent notion, such a mechanism is designed jointly with other components in 2 and 3.
The mechanism \eqref{eqvirtualsensor} may not be a physical sensor mechanism, but rather be a mere computational procedure.  For this reason, we also call \eqref{eqvirtualsensor} a ``virtual sensor." 
A virtual sensor can also be viewed as an instantaneous lossy data-compressor in the context of networked LQG control \cite{silva2011framework,extendedversion}.
As shown in \cite{extendedversion}, the knowledge of the optimal virtual sensor can be used to design a dithered uniform quantizer with desired performance.

We also claim that data-processing components in 1-3 can be synthesized by a tractable computational procedure based on SDP summarized below. The procedure is sequential, starting from controller design, followed by virtual sensor design and Kalman filter design. 

\begin{enumerate}[label=$\bullet$, itemsep = 2mm, topsep = 2mm, leftmargin = 3mm]
\item {\bf Step~1} (Controller design) Determine  feedback control gains $K_t$ via the backward Riccati recursion:
\begin{subequations}
\label{backwardriccati}
\begin{align}
S_t&=\begin{cases} Q_t & \text{ if } t=T \\ Q_t+\Phi_{t+1} & \text{ if } t=1,\cdots, T-1 \end{cases} \\
\Phi_t&=A_t^\top (S_t-S_t B_t (B_t^\top S_t B_t + R_t)^{-1} B_t^\top S_t) A_t \\
K_t&= -(B_t^\top S_t B_t + R_t)^{-1} B_t^\top S_t A_t \label{backricK}\\
\Theta_t&= K_t^\top (B_t^\top S_t B_t + R_t) K_t.
\end{align}
\end{subequations}
Positive semidefinite matrices $\Theta_t$ will be used in Step~2.
\item {\bf Step~2} (Virtual sensor design) Let $\{P_{t|t}, \Pi_t\}_{t=1}^T$ be the optimal solution to a max-det problem:
\begin{subequations}
\label{optprob3}
\begin{align}
\min_{\{P_{t|t}, \Pi_t\}_{t=1}^T} & \quad \frac{1}{2} \sum\nolimits_{t=1}^T \log\det \Pi_t^{-1} + c_1 \\
\text{s.t.} \;\;\;\;\;& \quad \sum\nolimits_{t=1}^T \text{Tr}(\Theta_t P_{t|t}) + c_2 \leq D \\
& \quad  \Pi_t \succ  0,  \label{optprob3Pi}\\
& \quad  P_{1|1}\preceq P_{1|0}, P_{T|T}=\Pi_T,  \\
& \quad  P_{t+1|t+1}\preceq A_t P_{t|t}A_t^\top +W_t, \label{optprob3e} \\
&\hspace{1ex} \left[\!\! \begin{array}{cc}P_{t|t}\!-\!\Pi_t\!\!\! &\!\! P_{t|t}A_t^\top \\
A_tP_{t|t} \!\!\!&\!\! A_t P_{t|t}A_t^\top\!+\!W_t  \end{array}\!\!\right]\! \succeq\! 0. \label{optprob3f}
\end{align}
\end{subequations} 
The constraint (\ref{optprob3Pi}) is imposed for every $t=1, \cdots, T$, while (\ref{optprob3e}) and (\ref{optprob3f}) are for every $t=1, \cdots, T-1$.
Constants $c_1$ and $c_2$ are  given by
\begin{align*}
c_1&=\tfrac{1}{2}\log\det{P_{1|0}}+\tfrac{1}{2}\sum\nolimits_{t=1}^{T-1} \log \det W_t \\
c_2&=\text{Tr}(N_1P_{1|0})+\sum\nolimits_{t=1}^T \text{Tr}(W_t S_t).
\end{align*} 
Define signal-to-noise ratio matrices $\{\mathsf{SNR}_t\}_{t=1}^T$ by
\begin{align*}
 \mathsf{SNR}_t&\triangleq  P_{t|t}^{-1}-P_{t|t-1}^{-1}, \;\; t=1,\cdots, T \\
 P_{t|t-1}&\triangleq A_{t-1}P_{t-1|t-1}A_{t-1}^\top + W_{t-1}, \;\; t=2,\cdots, T 
\end{align*}
and set $r_t=\text{rank}(\mathsf{SNR}_t)$. 
Apply the singular value decomposition to find  $C_t\in \mathbb{R}^{r_t\times n_t}$ and $V_t\in \mathbb{S}_{++}^{r_t}$ such that
\begin{equation}
\label{cvconst}
 \mathsf{SNR}_t= C_t^\top V_t^{-1}C_t, \;\; t=1,\cdots, T.
\end{equation}
If $r_t=0$, $C_t$ and $V_t$ are null (zero dimensional) matrices.

\item {\bf Step~3} (Filter design) Determine the Kalman gains by
\begin{equation}
\label{eqkalmangain}
L_t=P_{t|t-1}C_t^\top (C_t P_{t|t-1}C_t^\top + V_t)^{-1}.
\end{equation}
Construct a Kalman filter by
\begin{subequations}
\label{eqkalmanfilter}
\begin{align}
&\hat{\bx}_t=\hat{\bx}_{t|t-1}+L_t(\by_t-C_t\hat{\bx}_{t|t-1}) \label{eqkalmanfilter1}\\
&\hat{\bx}_{t+1|t}=A_t\hat{\bx}_t+B_t \bu_t. \label{eqkalmanfilter2}
\end{align}
\end{subequations}
If $r_t=0$, $L_t$ is a null matrix and (\ref{eqkalmanfilter1}) becomes $\hat{\bx}_t=\hat{\bx}_{t|t-1}$.
\end{enumerate}

An optimization problem \eqref{optprob3} plays a key role in the proposed synthesis. Intuitively, \eqref{optprob3} ``schedules'' the optimal sequence of covariance matrices $\{P_{t|t}\}_{t=1}^T$ in such a way that there exists a virtual sensor mechanism to realize it and the required data-rate is minimized. The virtual sensor and the Kalman filter are designed later to realize the scheduled covariance.
\begin{theorem}
\label{maintheorem}
An optimal policy for the problem (\ref{mainprob}) exists if and only if the max-det problem (\ref{optprob3}) is feasible, and the optimal value of (\ref{mainprob}) coincides with the optimal value of (\ref{optprob3}).
If the optimal value of (\ref{mainprob}) is finite, an optimal policy can be realized by a virtual sensor, Kalman filter, and a certainty equivalence controller as shown in Figure~\ref{fig:sep}.
Moreover, each of these components can be constructed by an SDP-based algorithm summarized in Steps 1-3.
\end{theorem}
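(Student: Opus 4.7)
The plan is to bound $\mathsf{DI}_T(D)$ both above and below by the optimal value of the max-det problem \eqref{optprob3}, with the upper bound proved by constructive exhibition of the three-stage policy. For the converse, fix any admissible $\gamma$ and let $\hat{\bx}_t \triangleq \mathbb{E}[\bx_t \mid \bu^t]$ with $P_{t|t} \triangleq \mathbb{E}[(\bx_t - \hat{\bx}_t)(\bx_t - \hat{\bx}_t)^\top]$. Two decompositions anchor the argument. The first is the classical completion-of-squares driven by the Riccati recursion \eqref{backwardriccati},
\[
J_\gamma = c_2 + \sum_{t=1}^T \text{Tr}(\Theta_t P_{t|t}) + \sum_{t=1}^T \mathbb{E}\|\bu_t - K_t\hat{\bx}_t\|^2_{B_t^\top S_t B_t + R_t},
\]
so the LQG constraint $J_\gamma \leq D$ forces $\sum_t \text{Tr}(\Theta_t P_{t|t}) + c_2 \leq D$. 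The second is a telescoping identity for directed information that uses only the Markov structure of \eqref{eqsystem},
\[
I_\gamma(\bx^T \to \bu^T) = h(\bx_1) + \sum_{t=1}^{T-1} h(\bw_t) - h(\bx^T \mid \bu^T),
\]
whose first two terms evaluate to $c_1 + (nT/2)\log(2\pi e)$ once Gaussian entropies are inserted. A Gaussian maximum-entropy upper bound on $h(\bx^T \mid \bu^T)$ converts this into a lower bound on $I_\gamma$ involving log-determinants of filtered covariances; after Schur-complement manipulations these log-determinants take the form $\tfrac{1}{2}\sum_t \log\det \Pi_t^{-1}$ with auxiliary matrices $\Pi_t$ satisfying the LMI \eqref{optprob3f} and the terminal condition $P_{T|T}=\Pi_T$. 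The monotonicity constraint \eqref{optprob3e} is in turn the standard fact that filtering cannot inflate the one-step predicted covariance, so $\{P_{t|t},\Pi_t\}$ is feasible for \eqref{optprob3} with objective no larger than $I_\gamma$.

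For achievability, given an optimizer $\{P_{t|t},\Pi_t\}$ of \eqref{optprob3} I would verify that the three-stage policy of Figure~\ref{fig:sep} tightens every inequality above. The singular value decomposition \eqref{cvconst} produces $(C_t,V_t)$ such that the Gaussian sensor \eqref{eqvirtualsensor} followed by the Kalman filter \eqref{eqkalmanfilter} has posterior covariance exactly the prescribed $P_{t|t}$. Because the resulting closed loop is jointly Gaussian and linear, the Gaussian maximum-entropy bound from the converse holds with equality, so $I_\gamma$ matches the objective of \eqref{optprob3}. The certainty-equivalence controller $\bu_t = K_t\hat{\bx}_t$ eliminates the last (non-negative) term in the cost decomposition, yielding $J_\gamma = c_2 + \sum_t \text{Tr}(\Theta_t P_{t|t}) \leq D$. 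This gives the reverse inequality, delivers an explicit optimizer whenever \eqref{optprob3} is feasible, and rules out admissible policies for \eqref{mainprob} whenever \eqref{optprob3} is infeasible.

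The most delicate step is the converse lower bound on directed information: under an arbitrary (possibly nonlinear, non-Gaussian) policy the state $\bx_t$ conditioned on $\bu^{t-1}$ need not be Gaussian, so naive term-by-term Gaussian comparisons on each $I(\bx^t;\bu_t \mid \bu^{t-1})$ can fail. The fix is to bound only the aggregate end term $h(\bx^T \mid \bu^T)$ via covariance-matched Gaussians, together with identifying \eqref{optprob3f} as the correct LMI relaxation capturing second-order covariances attainable by \emph{some} causal policy. A second subtlety is checking that Step~2's singular value decomposition realizes every feasible $\mathsf{SNR}_t$ with no redundant sensor dimensions, so that the resulting directed information really equals the log-det objective rather than just bounding it.
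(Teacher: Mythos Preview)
Your telescoping identity
\[
I_\gamma(\bx^T\!\to\!\bu^T)=h(\bx_1)+\sum_{t=1}^{T-1}h(\bw_t)-h(\bx^T\mid\bu^T)
\]
and the completion-of-squares decomposition of $J_\gamma$ are both correct, and the achievability half of your argument (showing the three-stage policy meets the bound with equality) is sound. The gap is in the converse, precisely at the sentence ``a Gaussian maximum-entropy upper bound on $h(\bx^T\mid\bu^T)$ \dots\ after Schur-complement manipulations these log-determinants take the form $\tfrac12\sum_t\log\det\Pi_t^{-1}$.'' The maximum-entropy bound on $h(\bx^T\mid\bu^T)$ gives you the log-determinant of the $nT\times nT$ \emph{smoothed} covariance $\bar\Sigma=\mathbb{E}[\mathrm{Cov}(\bx^T\mid\bu^T)]$, whereas the SDP objective, once $\Pi_t$ is eliminated, is $\tfrac12\sum_t[\log\det P_{t|t-1}-\log\det P_{t|t}]$ in the \emph{filtered} covariances $P_{t|t}=\mathbb{E}[\mathrm{Cov}(\bx_t\mid\bu^t)]$. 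These are not the same, and the inequality you need between them is not a Schur-complement identity. A term-by-term relaxation $h(\bx^T\mid\bu^T)\le\sum_t h(\bx_t\mid\bu^t)\le\tfrac12\sum_t\log\det(2\pi e P_{t|t})$ is too loose: it yields $I_\gamma\ge c_1-\tfrac12\sum_t\log\det P_{t|t}$, which falls short of the SDP objective by exactly $\tfrac12\sum_{t\ge2}\log\det P_{t|t-1}-\tfrac12\sum_{t\ge2}\log\det W_{t-1}\ge 0$, strictly positive whenever $A_{t-1}P_{t-1|t-1}A_{t-1}^\top\neq 0$. So the very step you flag as ``most delicate'' is where the proof breaks.

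The paper's route avoids this obstacle entirely. Rather than bounding $h(\bx^T\mid\bu^T)$, it first single-letterizes $I(\bx^T\!\to\!\bu^T)\ge\sum_t I(\bx_t;\bu_t\mid\bu^{t-1})$ and then proves (Lemma~\ref{lemmaIpg}) that replacing the joint law $\PP(x^{T+1},u^T)$ by a covariance-matched Gaussian $\GG$ can only decrease $\sum_t I(\bx_t;\bu_t\mid\bu^{t-1})$ while keeping $J$ unchanged. This Gaussian-replacement lemma is the real workhorse; its proof is a telescoping relative-entropy argument exploiting that $\PP(x_{t+1}\mid x_t,u_t)$ is Gaussian regardless of the policy. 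Once you are in the linear-Gaussian class, the reduction to the three-stage form and the SDP follows by elementary manipulations. Your proposal would need a substitute for Lemma~\ref{lemmaIpg} that controls the full smoothed covariance $\bar\Sigma$ of a non-Gaussian policy in terms of its filtered covariances, and no such inequality is supplied.
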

\begin{proof}
See Section~\ref{secderivation}.
\end{proof}

\begin{remark}
\label{remsingularw}
If $W_t$ is singular for some $t$, we suggest to factorize it as $W_t = F_tF_t^\top$ and use the following alternative
max-det problem instead of \eqref{optprob3}:
\begin{subequations}
\label{optprob3_singular}
\begin{align}
\min_{\{P_{t|t}, \Delta_t\}_{t=1}^T} & \quad \frac{1}{2} \sum\nolimits_{t=1}^T \log\det \Delta_t^{-1} + c_1 \\
\text{s.t.} \;\;\;\;\;& \quad \sum\nolimits_{t=1}^T \text{Tr}(\Theta_t P_{t|t}) + c_2 \leq D \\
& \quad  \Delta_t \succ  0,  \label{optprob3Pi_singular}\\
& \quad  P_{1|1}\preceq P_{1|0}, P_{T|T}=\Delta_T,  \\
& \quad  P_{t+1|t+1}\preceq A_t P_{t|t}A_t^\top +F_tF_t^\top, \label{optprob3e_singular} \\
&\hspace{1ex} \left[\!\! \begin{array}{cc}I \!-\!\Delta_t\!\!\! &\!\! F_t^\top \\
F_t \!\!\!&\!\! A_t P_{t|t}A_t^\top\!+\!F_tF_t^\top  \end{array}\!\!\right]\! \succeq\! 0. \label{optprob3f_singular}
\end{align}
\end{subequations} 
The constraint (\ref{optprob3Pi_singular}) is imposed for every $t=1, \cdots, T$, while (\ref{optprob3e_singular}) and (\ref{optprob3f_singular}) are for every $t=1, \cdots, T-1$.
Constants $c_1$ and $c_2$ are  given by $c_1=\tfrac{1}{2}\log\det{P_{1|0}}+\sum\nolimits_{t=1}^{T-1} \log |\det A_t |$ and $c_2=\text{Tr}(N_1P_{1|0})+\sum\nolimits_{t=1}^T \text{Tr}(F_t^\top S_t F_t)$.
This formulation requires that $A_t, t=1, ... ,T-1$ are non-singular matrices. Derivation
is omitted for brevity.
\end{remark}

\subsection{Time-invariant plants}
\label{secmainlti}
For time-invariant and infinite-horizon problems \eqref{eqltiplant} and \eqref{stationaryprob}, it can be shown that there exists an optimal policy with the same three-stage structure as in Figure~\ref{fig:3sep} in which all components are time-invariant. The optimal policy can be explicitly constructed by the following numerical procedure:

\begin{enumerate}[label=$\bullet$, itemsep = 2mm, topsep = 2mm, leftmargin = 3mm]
\item {\bf Step~1} (Controller design) Find the unique stabilizing solution to an algebraic Riccati equation
\begin{equation}
\label{algriccati}
A^\top SA-S-A^\top SB(B^\top SB+R)^{-1}B^\top SA+Q=0
\end{equation}
and determine the optimal feedback control gain by $K=-(B^\top SB+R)^{-1}B^\top SA$. Set $\Theta=K^\top (B^\top SB+R)K$.

\item {\bf Step~2} (Virtual sensor design) Choose $P$ and $\Pi$ as the solution to a max-det problem:
\begin{subequations}
\label{ltisdp}
\begin{align}
\min_{P, \Pi} & \quad \frac{1}{2} \log\det \Pi^{-1} + \frac{1}{2} \log \det W \\
\text{s.t.} & \quad \text{Tr}(\Theta P) + \text{Tr}(W S) \leq D, \\
& \quad  \Pi \succ  0,  \label{ltisdpPi}\\
& \quad  P\preceq A P A^\top +W, \label{ltisdpe} \\
&\hspace{1ex} \left[\!\! \begin{array}{cc}P-\Pi \!\!\! &\!\! PA^\top \\
AP \!\!\!&\!\! A PA^\top +W \end{array}\!\!\right]\! \succeq\! 0. \label{ltisdpf}
\end{align}
\end{subequations} 
Define $\tilde{P}\triangleq A PA^\top +W$, $\mathsf{SNR}\triangleq P^{-1}-\tilde{P}^{-1}$ and set $r=\text{rank}(\mathsf{SNR})$. Choose a virtual sensor $\by_t=C\bx_t+\bv_t, \; \bv_t\sim\mathcal{N}(0, V)$ with matrices $C\in\mathbb{R}^{r\times n}$ and $V\in\mathbb{S}_{++}^r$ such that $C^\top V^{-1}C=\mathsf{SNR}$.

\item {\bf Step~3} (Filter design) Design a time-invariant Kalman filter
\begin{align*}
&\hat{\bx}_t=\hat{\bx}_{t|t-1}+L(\bz_t-C\hat{\bx}_{t|t-1}) \\
&\hat{\bx}_{t+1|t}=A\hat{\bx}_t+B\bu_t
\end{align*}
with $L=\tilde{P}C^\top (C\tilde{P}C^\top+V)^{-1}$.
\end{enumerate}
\begin{theorem}
\label{theostationary}
An optimal policy for (\ref{stationaryprob}) exists if and only if a max-det problem (\ref{ltisdp}) is feasible, and the optimal value of  (\ref{stationaryprob}) coincides with that of (\ref{ltisdp}). 
Moreover, an optimal policy can be realized by a virtual sensor, Kalman filter, and a certainty equivalence controller as shown in Figure~\ref{fig:3sep}, all of which are time-invariant. Each of these components can be constructed by Steps~1-3.
\end{theorem}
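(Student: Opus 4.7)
The plan is to bootstrap from the finite-horizon result (Theorem~\ref{maintheorem}) to the stationary setting by combining a Cesàro/time-averaging argument on the converse side with an explicit construction on the achievability side, using the stabilizability/detectability hypotheses to collapse the time-varying data onto their stationary counterparts.

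\textbf{Achievability.} Assuming (\ref{ltisdp}) is feasible with optimizer $(P,\Pi)$, I would build $K, C, V, L$ exactly as in Steps~1--3 and verify four items. First, under $(A,B)$ stabilizable and $(A,Q)$ detectable, the stabilizing solution $S$ of (\ref{algriccati}) exists and $A+BK$ is Schur, so the certainty-equivalence loop is mean-square stable. Second, the innovations-form Kalman filter with gain $L$ drives the estimation-error covariance to $P$ and the state second moment to $\tilde P = A P A^\top + W$ in the limit; here the LMI (\ref{ltisdpe}) is precisely what guarantees existence of a steady-state filter covariance compatible with the virtual sensor $(C,V)$ defined through $C^\top V^{-1} C = P^{-1}-\tilde P^{-1}$. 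Third, the steady-state LQG cost decomposes, via the standard separation identity, as $\mathrm{Tr}(\Theta P)+\mathrm{Tr}(WS)\le D$. Fourth, the per-stage directed information $I(\bx_t;\bu_t\mid\bu^{t-1})$ collapses (by the data-processing through the Kalman filter and the Gaussianity of the loop) to $\tfrac12\log\det(\tilde P P^{-1})$, which under the tight LMI (\ref{ltisdpf}) equals $\tfrac12\log\det\Pi^{-1}+\tfrac12\log\det W$, i.e.\ the SDP objective. This yields an admissible policy of the three-stage form whose rate matches the SDP value.

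\textbf{Converse.} For any $\gamma\in\Gamma$ feasible for (\ref{stationaryprob}), write $J_T=\tfrac1T J_\gamma^{[1:T]}$, $I_T=\tfrac1T I_\gamma(\bx^T\!\to\!\bu^T)$. Theorem~\ref{maintheorem} applied on each horizon gives $T\,I_T\ge \mathsf{SDP}_T(T J_T)$ where $\mathsf{SDP}_T$ is the optimum of (\ref{optprob3}). Under stabilizability/detectability, the backward Riccati of Step~1 converges to the stabilizing solution of (\ref{algriccati}), so $S_t\to S$ and $\Theta_t\to\Theta$; the end-point terms $c_1,c_2$ are thus $O(1)$ and wash out after dividing by $T$. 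The key observation is that given any sequence of feasible tuples $\{P_{t|t},\Pi_t\}_{t=1}^T$ for (\ref{optprob3}), the Cesàro averages $\bar P=\tfrac1T\sum_t P_{t|t}$, $\bar\Pi=\tfrac1T\sum_t\Pi_t$ satisfy the stationary LMIs (\ref{ltisdpe})--(\ref{ltisdpf}) by convexity and Schur-complement, while by concavity of $\log\det$ Jensen's inequality yields $\tfrac1T\sum_t\log\det\Pi_t^{-1}\ge \log\det\bar\Pi^{-1}$ up to lower-order terms. Taking $\limsup_T$ and invoking uniform tightness of $\{P_{t|t}\}$ (the cost bound $\mathrm{Tr}(\Theta_t P_{t|t})$ being uniformly controlled by $D$), one obtains $\mathsf{DI}(D)\ge\mathsf{SDP}(D)$. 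Combined with achievability, this also forces existence of an optimizer whenever (\ref{ltisdp}) is feasible, and infeasibility of (\ref{ltisdp}) rules out any admissible $\gamma$ by the same inequality.

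\textbf{Three-stage structure and main obstacle.} The structural claim is immediate on the achievability side, since the constructed policy is manifestly of the virtual-sensor/Kalman-filter/certainty-equivalence form, and the converse shows no admissible $\gamma$ beats it. The main technical obstacle is the Cesàro step in the converse: one must justify the exchange of $\limsup_T$ with the nonlinear $\log\det$ objective and ensure that accumulation points of the rescaled time-varying SDP optimizers remain feasible for (\ref{ltisdp}). I would handle this by first restricting to a compact sublevel set of covariances enforced by the cost constraint and the Lyapunov-type inequality (\ref{optprob3e}), extracting a convergent subsequence via Bolzano--Weierstrass, and then invoking upper semicontinuity of $-\log\det$ together with the convex (LMI) nature of the feasible set to transfer feasibility to the limit. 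The time-invariance of $(A,B,Q,R,W)$ is essential here because it makes the averaged LMIs identical in form to the stationary ones.
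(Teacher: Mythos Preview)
Your proposal is broadly correct and tracks the paper's outline in Appendix~\ref{apptheostationary} for the first two stages: both arguments extend the chain of inequalities behind Theorem~\ref{maintheorem} term-by-term to the infinite horizon, and both invoke stabilizability/detectability so that the backward Riccati iterates $S_t,K_t,\Theta_t$ converge to the stationary $S,K,\Theta$, reducing the residual problem to an SDP over infinite sequences $\{P_{t|t},\Pi_t\}_{t\in\mathbb N}$ with time-invariant data $(A,W,\Theta)$.

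The genuine divergence is in how you collapse that infinite-sequence SDP onto (\ref{ltisdp}). The paper does \emph{not} carry out a Ces\`aro argument; it simply cites \cite{srdstationary} for the statement that the infinite-sequence problem is optimized by a constant sequence $P_{t|t}\equiv P,\ \Pi_t\equiv\Pi$. Your averaging route is a legitimate and more self-contained alternative: the LMIs (\ref{optprob3e})--(\ref{optprob3f}) are linear in $(P_{t|t},\Pi_t)$ so they survive averaging (up to boundary terms $O(1/T)$), and concavity of $\log\det$ gives the Jensen inequality you need on the objective. What your approach buys is independence from an external reference; what the paper's buys is brevity.

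The obstacle you flag is the right one, and your sketch for handling it is reasonable but needs one refinement. You propose to obtain compactness of $\{P_{t|t}\}$ from the cost bound $\tfrac{1}{T}\sum_t\mathrm{Tr}(\Theta_t P_{t|t})\le D+o(1)$ together with (\ref{optprob3e}). Be aware that $\Theta=K^\top(B^\top SB+R)K$ is in general singular, so the cost bound alone does not control $P_{t|t}$ in $\ker\Theta$. A clean fix is to decompose along the stable/unstable invariant subspaces of $A$: on the unstable part, $\Theta$ is necessarily positive definite (otherwise the cost would be infinite), so the trace bound gives compactness there; on the stable part, the averaged inequality $\bar P\preceq A\bar P A^\top+W+o(1)$ is itself a Lyapunov bound that forces $\bar P$ to stay bounded. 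With that in hand, Bolzano--Weierstrass and closedness of the LMI feasible set complete your converse exactly as you describe.
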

\begin{proof} See Appendix~\ref{apptheostationary}.
\end{proof}
Theorem~\ref{theostationary} shows a noteworthy fact that $\mathsf{DI}(D)$ defined by \eqref{stationaryprob} admits a single-letter characterization, i.e., it can be evaluated by solving a finite-dimensional optimization problem \eqref{ltisdp}.

\subsection{Data-rate theorem for mean-square stabilization}
Theorem~\ref{theostationary} shows that $\mathsf{DI}(D)$ defined by \eqref{stationaryprob} admits a semidefinite representation  \eqref{ltisdp}.
By analyzing the structure of the optimization problem \eqref{ltisdp}, one can obtain a closed-from expression of the quantity $\lim_{D\rightarrow +\infty} \mathsf{DI}(D)$.
Notice that this quantity can be interpreted as the minimum data-rate (measured in directed information) required for mean-square stabilization.
The next corollary shows a connection between our study in this paper and the data-rate theorem by Nair and Evans \cite{nair2004stabilizability}.
\begin{corollary}
\label{cordatarate}
Denote by $\sigma_+(A)$ the set of eigenvalues $\lambda_i$ of $A$ such that $|\lambda_i|\geq 1$ counted with multiplicity. Then,
\begin{equation}
\label{eqmsdatarate}
\lim_{D\rightarrow +\infty} \!\mathsf{DI}(D)=\sum_{\lambda_i \in \sigma_+(A)} \log|\lambda_i|. 
\end{equation}
\end{corollary}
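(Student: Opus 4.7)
My plan is to work directly with the semidefinite representation of $\mathsf{DI}(D)$ from Theorem~\ref{theostationary} and analyze the $D\to\infty$ limit of the SDP \eqref{ltisdp}. Since $\mathsf{DI}(D)$ is nonincreasing in $D$ and bounded below by zero, the limit exists. As $D\to\infty$ the single cost-budget inequality $\text{Tr}(\Theta P)+\text{Tr}(WS)\leq D$ becomes inactive for every fixed feasible $(P,\Pi)$, so the limit equals the optimal value of \eqref{ltisdp} with that constraint dropped. For any $P$, the objective is monotone decreasing in $\Pi$ in the Loewner order, so the LMI \eqref{ltisdpf} is tight at the optimum; Schur complement gives $\Pi^\star = P - PA^\top(APA^\top+W)^{-1}AP$, and a short computation (matrix determinant lemma) yields $\det\Pi^\star = \det P \cdot \det W / \det(APA^\top+W)$. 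Substituting and cancelling $\tfrac12\log\det W$, the problem collapses to
\[
\lim_{D\to\infty}\mathsf{DI}(D)\;=\;\inf_{0\prec P\preceq APA^\top+W}\tfrac{1}{2}\bigl[\log\det(APA^\top+W)-\log\det P\bigr].
\]

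Next I would observe that both the constraint set and the objective are invariant under similarity changes of variable $(A,P,W)\mapsto (T^{-1}AT,\,T^{-1}PT^{-\top},\,T^{-1}WT^{-\top})$, since the two $\log\det$ terms both pick up the same $-2\log|\det T|$. Hence, without loss of generality, $A = \mathrm{diag}(A_u,A_s)$ with $\sigma(A_u)=\sigma_+(A)$ and $\rho(A_s)<1$. For the upper bound I would construct the feasible candidate $P_\tau = \mathrm{diag}(\tau\Sigma_u,\,P_s^\star)$, where $P_s^\star$ solves the stationary Lyapunov equation for the stable block (making its contribution vanish in the limit) and $\Sigma_u\succ 0$ satisfies $A_u\Sigma_u A_u^\top\succeq \Sigma_u$; such $\Sigma_u$ exists because $A_u^{-1}$ is stable, which allows solving the reverse Lyapunov equation with a positive-definite right-hand side (marginal eigenvalues on the unit circle can be dealt with by a small perturbation of $A$). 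A direct calculation then shows $\tfrac12[\log\det(AP_\tau A^\top+W)-\log\det P_\tau]\to \log|\det A_u| = \sum_{\lambda_i\in\sigma_+(A)}\log|\lambda_i|$ as $\tau\to\infty$.

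The main obstacle is the matching lower bound, namely $\log\det(APA^\top+W)-\log\det P \geq 2\sum_{\lambda_i\in\sigma_+(A)}\log|\lambda_i|$ for every feasible $P$. The naive inequality $APA^\top+W\succeq APA^\top$ with $A$ invertible yields only $2\sum_i\log|\lambda_i|$, which is strictly smaller whenever stable modes are present, so a sharper argument is needed. I would partition $P$ and $APA^\top+W$ conformally with the unstable/stable split; the $(1,1)$-block of the constraint is $P_{11}\preceq A_u P_{11}A_u^\top + W_{11}$, and $\det(A_u P_{11}A_u^\top + W_{11})\geq |\det A_u|^2\det P_{11}$ delivers the right factor on the unstable block. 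The remaining work is to show, via a Fischer-type determinant inequality applied to the Schur complements of both matrices, that the ``stable-given-unstable'' contribution is nonnegative along the constraint. This is where the Loewner-order structure of the Lyapunov inequality must be exploited carefully; it is essentially an SDP incarnation of the Nair--Evans volumetric entropy argument, and once established, matching the upper and lower bounds and dividing by two yields \eqref{eqmsdatarate}.
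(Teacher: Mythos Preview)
Your reduction of $\lim_{D\to\infty}\mathsf{DI}(D)$ to
\[
\inf_{0\prec P\preceq APA^\top+W}\tfrac{1}{2}\bigl[\log\det(APA^\top+W)-\log\det P\bigr]
\]
is correct and matches the paper's starting point exactly. The similarity-invariance observation and the block-diagonalization of $A$ are also fine.

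The genuine gap is in the lower bound, and your proposed route via Fischer/Schur-complement inequalities does not close. Writing $N=APA^\top+W$ and $M=P$ with $M\preceq N$, you factor $\det N/\det M=(\det N_{11}/\det M_{11})\cdot(\det[N/N_{11}]/\det[M/M_{11}])$ and correctly bound the first factor by $|\det A_u|^2$. But you then need $\det[N/N_{11}]\geq \det[M/M_{11}]$, and this does \emph{not} follow from $M\preceq N$: the Schur-complement map is not operator monotone, and Fischer's inequality goes the wrong way for the upper-left block of $N$. The off-diagonal blocks of $W$ (which survive your similarity transformation) couple the stable and unstable parts in a way that a purely block-wise determinant argument does not control. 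There is also a smaller feasibility issue in your upper-bound construction: with $P_\tau=\mathrm{diag}(\tau\Sigma_u,P_s^\star)$ and $P_s^\star$ the exact Lyapunov solution, the $(2,2)$ block of $AP_\tau A^\top+W-P_\tau$ vanishes, so any nonzero off-diagonal block of $W$ destroys positive semidefiniteness.

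The paper resolves both issues with two ingredients you are missing. First, it proves that $v^*(A,W)$ is \emph{monotone} in $W$ in the Loewner order (via a Riccati comparison), which lets one replace $W$ by $\delta I$ above and $\epsilon I$ below; this kills the off-diagonal coupling. Second, and this is the key idea for the lower bound, it passes to the Lagrangian dual of the SDP: restricting the dual variables to block-diagonal form can only \emph{decrease} the dual objective, and the restricted dual decouples exactly into the pure-unstable and pure-stable problems. Strong duality then gives $v^*(A,\epsilon I)\geq v^*(A_1,\epsilon I)+v^*(A_2,\epsilon I)=\sum_{\lambda_i\in\sigma_+(A)}\log|\lambda_i|$. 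Your primal Fischer-type argument is trying to reproduce this decoupling on the primal side, where it does not hold; the decoupling is a dual phenomenon.
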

\begin{proof}
See Appendix \ref{appdatarate}.
\end{proof}

Corollary~\ref{cordatarate} indicates that the minimal data-rate for mean-square stabilization does not depend on the noise property $W$. This result is consistent with the observation in \cite{nair2004stabilizability}. However, as is clear from the semidefinite representation \eqref{ltisdp}, minimal data-rate to achieve control performance $J_t\leq D$ depends on $W$ when $D$ is finite.

Corollary~\ref{cordatarate} has a further implication that there exists a quantized LQG control scheme implementable over a noiseless binary channel such that data-rate is arbitrarily close to \eqref{eqmsdatarate} and the closed-loop systems in stabilized in the mean-square sense. See \cite{tanaka2016optimal} for details.

{\upd Mean-square stabilizability of linear systems by quantized feedback with Markovian packet losses is considered in \cite{you2011minimum}, where a necessary and sufficient condition in terms of nominal data-rate and packet dropping probability is obtained.
Although directed information is not  used in  \cite{you2011minimum}, it would be an interesting future work to compute $\lim_{T\rightarrow \infty} \frac{1}{T}I(X^T\rightarrow U^T)$ under the stabilization scheme proposed there and study how it is compared to the right hand side of \eqref{eqmsdatarate}.
}

\ifdefined\DOUBLECOLUMN	
		\begin{figure}[t]
		\centering
		\includegraphics[width=0.8\columnwidth]{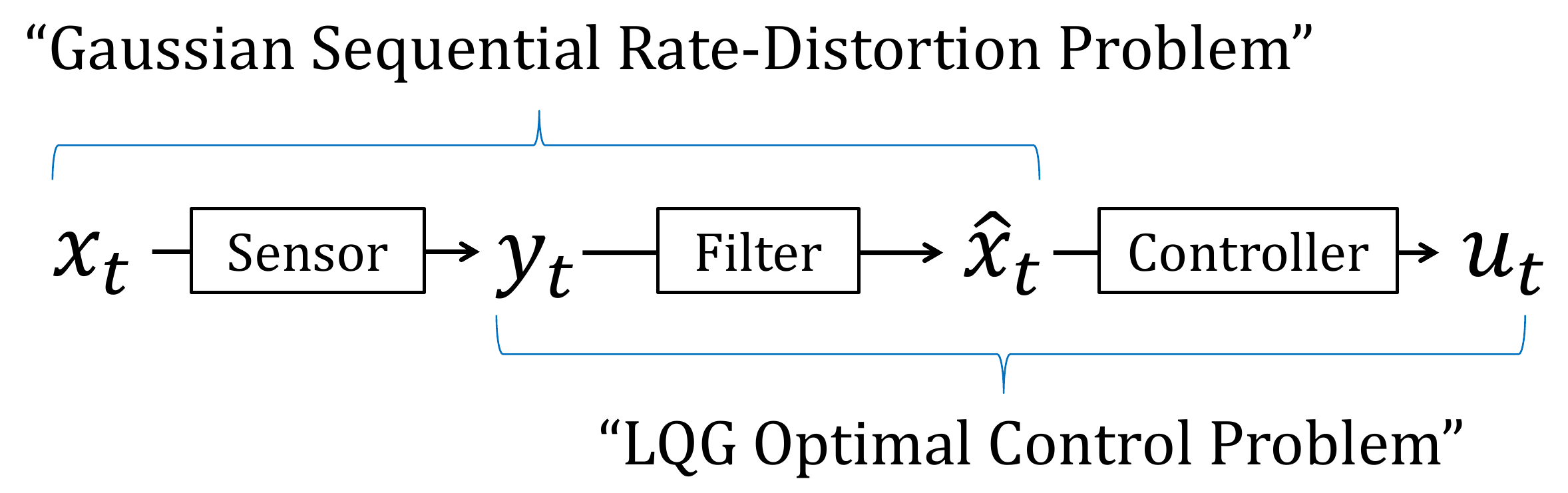}
		\caption{Sensor-filter-controller separation principle: integration of the sensor-filter and filter-controller separation principles.}
		\label{fig:3sep}
		\vspace{-3ex}
	\end{figure}
\fi
\ifdefined\SINGLECOLUMN
		\begin{figure}[t]
		\centering
		\includegraphics[width=0.6\columnwidth]{sep3_crop.pdf}
		\caption{Sensor-filter-controller separation principle: integration of the sensor-filter and filter-controller separation principles.}
		\label{fig:3sep}
	\end{figure}
\fi

\subsection{Connections to existing results}
	\label{seccontribution}
		
	We first note that the ``sensor-filter-controller" structure identified by Theorem~\ref{maintheorem} is not a simple consequence of the filter-controller separation principle in the standard LQG control theory \cite{witsenhausen1971separation}. {\upd Unlike the standard framework in which a sensor mechanism \eqref{eqvirtualsensor} is given \emph{a priori}, in \eqref{mainprob} we \emph{design} a sensor mechanism jointly with other components.
	Intuitively, a sensor mechanism in our context plays a role to reduce information flow from $\by_t$ to $\bx_t$.  
 The proposed sensor design algorithm has already appeared in \cite{1503.01848}. In this paper we strengthen the result by showing that the designed linear sensor turns out to be optimal among all nonlinear (Borel measurable) sensor mechanisms.
	
Information-theoretic fundamental limitations of feedback control are derived in
\cite{iglesias2001analogue,zang2003nonlinear,elia2004bode,martins2008}
 via the ``Bode-like'' integrals. However, the connection between \cite{iglesias2001analogue,zang2003nonlinear,elia2004bode,martins2008} and our problem \eqref{mainprob} is not straightforward, and the structural result shown in Figure~\ref{fig:sep} does not appear in \cite{iglesias2001analogue,zang2003nonlinear,elia2004bode,martins2008}.}
Also, we note that our problem formulation \eqref{mainprob} is different from networked LQG control problem over Gaussian channels \cite{tatikonda2004,braslavsky2007feedback,charalambous2008} where a model of Gaussian channel is given \emph{a priori}. In such problems, linearity of the optimal policy is already reported \cite[Ch.10,11]{yuksel2013stochastic}.

	It should  be noted that problem \eqref{mainprob} is closely related to the sequential rate-distortion problem (also called zero-delay or non-anticipative rate-distortion problem)  \cite{TatikondaThesis,1411.7632,charalambous2014nonanticipative}. In the Gaussian sequential rate-distortion problem where the plant \eqref{eqsystem} is an uncontrolled system (i.e., $\bu_t=0$), it can be shown that the optimal policy can be realized by a two-stage ``sensor-filter'' structure \cite{1411.7632}.  However, the same result is not known for the case in which feedback controllers must be designed simultaneously. 
	{\upd Relevant papers towards this direction include \cite{rezaei2006rate,charalambous2014nonanticipative,charalambous2014optimization}, where Csisz{\'a}r's formulation of rate-distortion functions \cite{csiszar1974extremum} is extended to the non-anticipative regime.} In particular, \cite{charalambous2014optimization} considers non-anticipative rate-distortion problems with feedback. 
In \cite{sims2003implications} and \cite{shafieepoorfard2013rational}, LQG control problems with information-theoretic costs similar to \eqref{mainprob} are considered.
	However, the optimization problem considered in these papers are not equivalent to \eqref{mainprob}, and the structural result shown in Figure~\ref{fig:3sep} does not appear.

{\upd
In a very recent paper \cite[Lemma 3.1]{silva2013characterization}, 
it is independently reported that 
the optimal policy for \eqref{mainprob} can be realized by an additive white Gaussian noise (AWGN) channel and linear filters.
While this result is compatible to ours, it is noteworthy that the proof technique there is different from ours and is based on fundamental inequalities for directed information obtained in \cite{derpich2013fundamental}.
In comparison to \cite{silva2013characterization},
we additionally prove that
the optimal control policy can be realized by a state space model with a three-stage structure (shown in Figure~\ref{fig:sep}, \ref{fig:3sep}), which appears to be a new observation to the best of our knowledge.
}

	The SDP-based algorithms to solve \eqref{mainprob}, \eqref{stationaryprob} and \eqref{partiallyobservableprob} are newly developed in this paper, using the techniques presented in \cite{1411.7632} and \cite{1503.01848}.
	{\upd
Due to the lack of analytical expression of the optimal policy (especially for MIMO and time-varying plants), 
the use of optimization-based algorithms seems critical. 
In \cite{stavrou2016filtering}, an iterative water-filling algorithm is proposed for a highly relevant problem.
	In this paper, the main algorithmic tool is SDP, which allows us to generalize the results in \cite{silva2011framework,silva2011achievable,silva2013characterization} to MIMO and time-varying settings. 
	}

\section{Example}
\label{secexample}

In this section, we consider a simple numerical example to demonstrate the SDP-based control design presented in Section~\ref{secmainlti}. Consider a time-invariant plant \eqref{eqltiplant} with randomly generated matrices
\[
A\!=\!\!{\small \left[\!\! \begin{array}{cccc}0.12 \!\!&\!\!   0.63 \!\!&\!\!  -0.52 \!\!&\!\! 0.33\\
    0.26  \!\!&\!\! -1.28  \!\!&\!\!  1.57   \!\!&\!\! 1.13 \\
   -1.77   \!\!&\!\! -0.30    \!\!&\!\! 0.77   \!\!&\!\! 0.25\\
   -0.16   \!\!&\!\!  0.20  \!\!&\!\!  -0.58   \!\!&\!\!  0.56\end{array}\!\!\right] }\!,
W\!=\!\!{\small\left[\!\! \begin{array}{cccc}  4.94 \!\!&\!\!   -0.10 \!\!&\!\!    1.29  \!\!&\!\!   0.35\\
     \!\!&\!\!   5.55  \!\!&\!\!   2.07 \!\!&\!\!    0.31\\
     \!\!&\!\!     \!\!&\!\!   2.02   \!\!&\!\!  1.43\\
   \text{sym.}  \!\!&\!\!      \!\!&\!\!     \!\!&\!\!   3.10\end{array}\!\!\right]}
  \]
 \[  
B={\small\left[\! \begin{array}{cccc} 0.66  \!&\! -0.58  \!&\!  0.03   \!&\! -0.20\\
    2.61  \!&\!-0.91   \!&\!  0.87  \!&\! -0.07\\
   -0.64  \!&\!  -1.12  \!&\!  -0.19  \!&\!  0.61\\
    0.93   \!&\!  0.58  \!&\!  -1.18 \!&\!  -1.21\end{array}\!\right]}, 
\]
and the optimization problem \eqref{stationaryprob} with $Q=I$ and $R=I$.
By solving (\ref{ltisdp}) with various $D$, we obtain the rate-performance trade-off curve shown in Figure~\ref{fig:tradeoff} (top left).
The vertical asymptote $D=\text{Tr}(WS)$ corresponds to the best achievable control performance when unrestricted amount of information about the state is available. 
This corresponds to the performance of the state-feedback linear-quadratic regulator (LQR). 
The horizontal asymptote $\sum_{\lambda_i\in\sigma_{+}(A)} \log |\lambda_i|=1.169$ [bits/sample] is the minimum data-rate to achieve mean-square stability. Figure~\ref{fig:tradeoff} (bottom left) shows the rank of $\mathsf{SNR}$ matrices as a function of $D$. Since $\mathsf{SNR}$ is computed numerically by an SDP solver with some finite numerical precision, $\text{rank}(\mathsf{SNR})$ is obtained by truncating singular values smaller than $0.1$\% of the maximum singular value. Figure~\ref{fig:tradeoff} (right) shows selected singular values at $D=33,40$ and $80$.
Observe the phase transition (rank dropping) phenomena. The optimal dimension of the sensor output changes as $D$ changes.

 \ifdefined\DOUBLECOLUMN	
\begin{figure}[t]
    \centering
    \begin{minipage}{0.66\columnwidth}
        \centering
        \includegraphics[width=\linewidth]{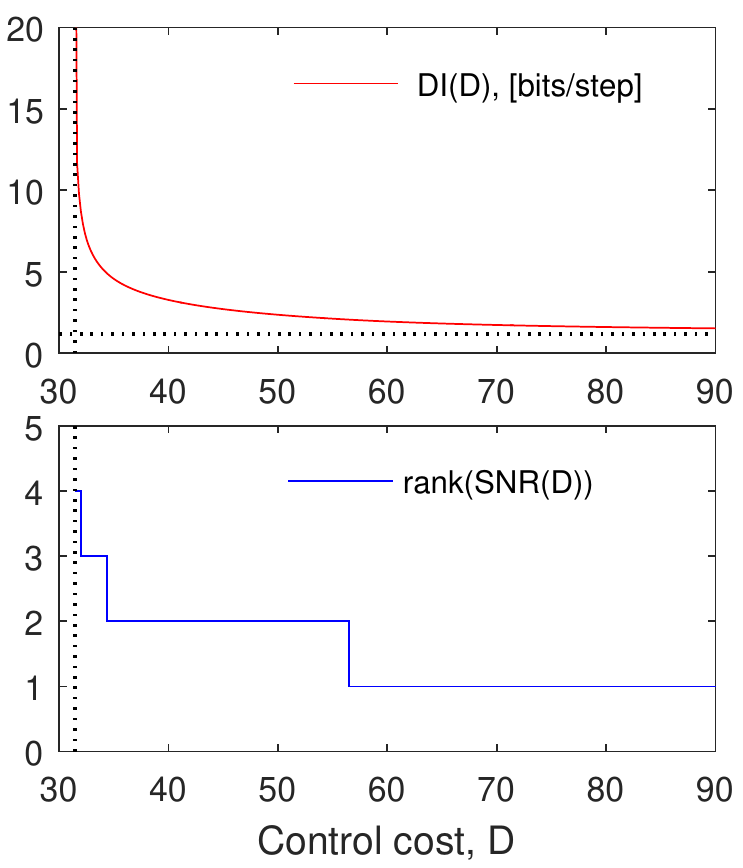}
    \end{minipage}%
    \begin{minipage}{0.33\columnwidth}
        \centering
        \includegraphics[width=\linewidth]{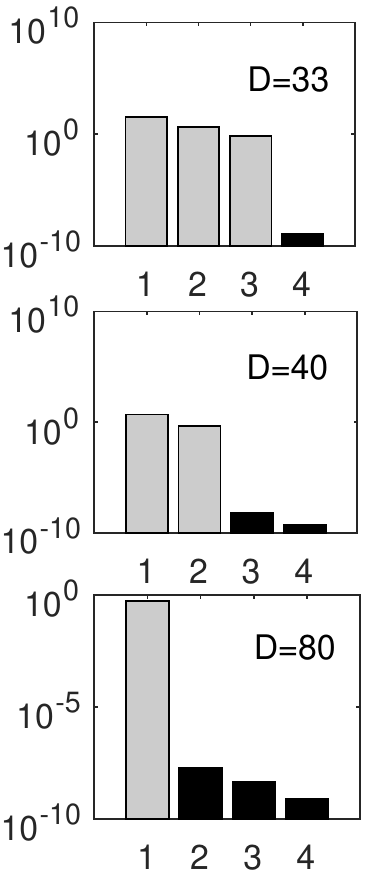}
    \end{minipage}
            \caption{(Top left) Data rate $\mathsf{DI}(D)$ [bits/step] required to achieve control performance $D$. (Bottom left) Rank of $\mathsf{SNR(D)}$, evaluated after truncating singular values smaller than $0.1$\% of the maximum singular value. (Right) Singular values of $\mathsf{SNR(D)}$ evaluated at $D=33$, $40$ and $80$. Truncated singular values are shown in block bars. An SDP solver SDPT3 \cite{toh1999sdpt3} with YALMIP \cite{lofberg2004yalmip} interface is used.}
        \label{fig:tradeoff}
        \vspace{-3ex}
\end{figure}
\fi
 \ifdefined\SINGLECOLUMN	
\begin{figure}[t]
    \centering
    \begin{minipage}{0.4\columnwidth}
        \centering
        \includegraphics[width=\linewidth]{curve_rank.pdf}
    \end{minipage}%
    \begin{minipage}{0.2\columnwidth}
        \centering
        \includegraphics[width=\linewidth]{sv.pdf}
    \end{minipage}
            \caption{(Top left) Data rate $\mathsf{DI}(D)$ [bits/step] required to achieve control performance $D$. (Bottom left) Rank of $\mathsf{SNR(D)}$, evaluated after truncating singular values smaller than $0.1$\% of the maximum singular value. (Right) Singular values of $\mathsf{SNR(D)}$ evaluated at $D=33$, $40$ and $80$. Truncated singular values are shown in block bars. An SDP solver SDPT3 \cite{toh1999sdpt3} with YALMIP \cite{lofberg2004yalmip} interface is used.}
        \label{fig:tradeoff}
\end{figure}
\fi

Specifically, the minimum data-rate to achieve control performance $D=33$ is found to be $6.133$ [bits/sample]. The optimal sensor mechanism $\by_t=C\bx_t+\bv_t, \bv_t\sim\mathcal{N}(0,V)$ to achieve this performance is given by
\[
 C\!=\!\!{\small\left[\!\! \begin{array}{cccc}    -0.864   \!\!&\!\!  0.258 \!\!&\!\!   -0.205  \!\!&\!\!  -0.382\\
   -0.469  \!\!&\!\!  -0.329  \!\!&\!\!  0.662   \!\!&\!\!  0.483\\
   -0.130  \!\!&\!\!   0.332   \!\!&\!\!-0.502    \!\!&\!\! 0.780  \end{array}\!\!\right]}\!,
V\!=\!\!{\small\left[\!\! \begin{array}{ccc}      0.029   \!\! \!\!&\!\!      0     \!\!&\!\!     0\\
         0  \!\!&\!\!\!\!   0.208    \!\! \!\!&\!\!     0\\
         0     \!\!&\!\!     0    \!\!&\!\!\!\! 1.435   \end{array}\!\!\right]}.
\]
If $D=40$, required data-rate is $3.266$ [bits/sample] and the optimal sensor is given by
  \[
  C\!=\!\!{\small\left[\! \begin{array}{cccc} -0.886    \!\!&\!\! 0.241  \!\!&\!\! -0.170\!\!&\!\!   -0.359 \\
   -0.431 \!\!&\!\!  -0.350 \!\!&\!\!   0.647  \!\!&\!\!  0.523
    \end{array}\!\right]}, \;\;
  V\!=\!\!{\small\left[\! \begin{array}{cc}  0.208  \!\!\!\!&\!\!       0\\
         0  \!\!&\!\!\!\!  2.413 \end{array}\!\right]}.
    \]     
Similarly, minimum data-rate to achieve $D=80$ is $1.602$ [bits/sample], and this is achieved by a sensor mechanism with
    \[  
    C\!=\!\!{\small\left[\! \begin{array}{cccc}-0.876  \!\!&\!\!   0.271  \!\!&\!\!  -0.169  \!\!&\!\!  -0.362 \end{array}\!\right]},\;\;
    V=\!\!{\small \begin{array}{c}1.775\end{array}\!\!}.
    \]
 Figure~\ref{fig:sim} shows the closed-loop responses of the state trajectories simulated in each scenario.
 
 \ifdefined\DOUBLECOLUMN	
 \begin{figure}[t]
    \centering
    \includegraphics[width=\columnwidth]{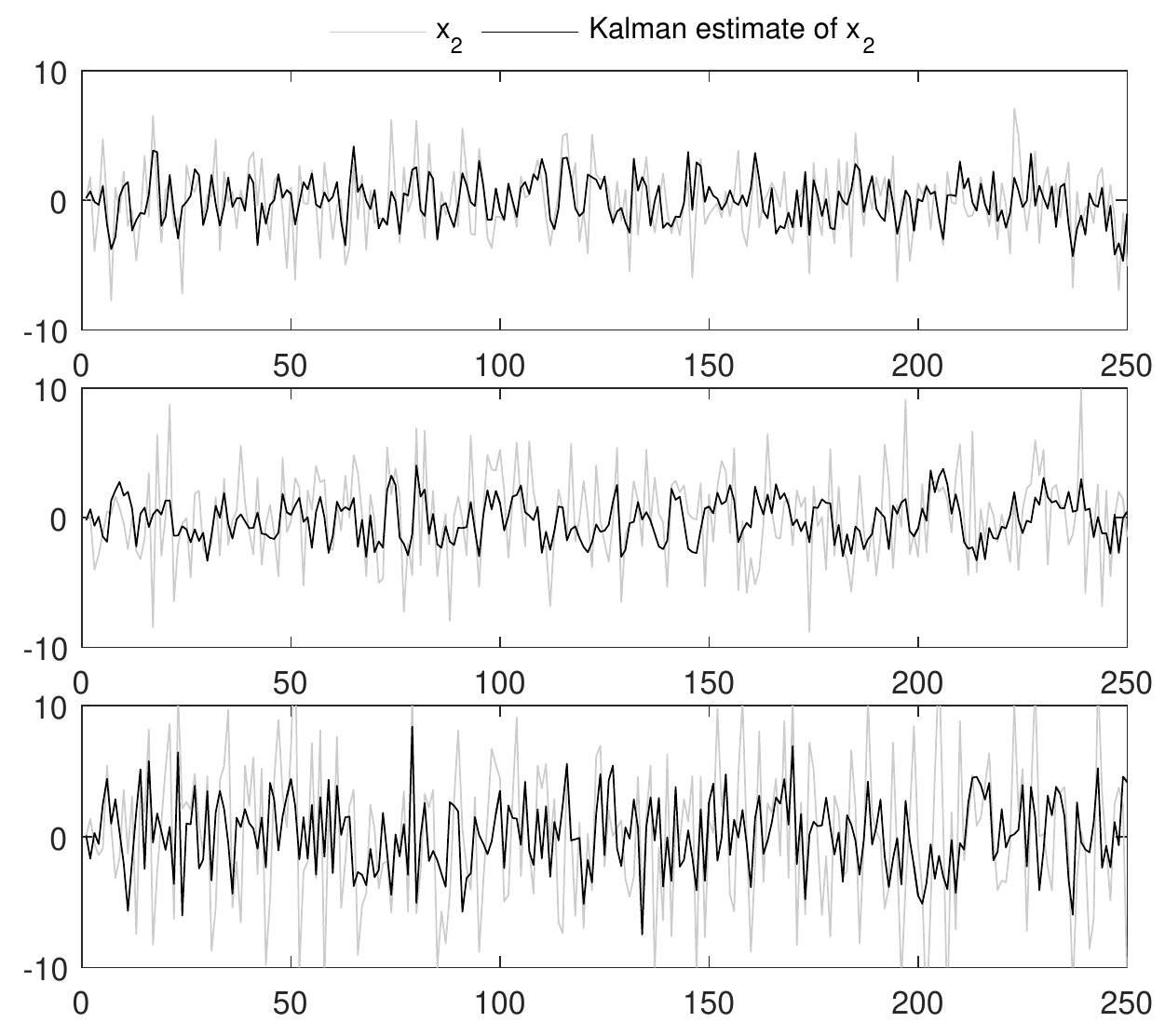}
    \caption{Closed-loop performances of the controllers designed for $D=33$ (top), $D=40$ (middle), and $D=80$ (bottom). Trajectories of the second component of the state vector and their Kalman estimates are shown. }
    \label{fig:sim}
    \vspace{-3ex}
\end{figure}
\fi
\ifdefined\SINGLECOLUMN	
 \begin{figure}[t]
    \centering
    \includegraphics[width=0.6\columnwidth]{sim.pdf}
    \caption{Closed-loop performances of the controllers designed for $D=33$ (top), $D=40$ (middle), and $D=80$ (bottom). Trajectories of the second component of the state vector and their Kalman estimates are shown. }
    \label{fig:sim}
\end{figure}
\fi

\section{Derivation of Main Result}
\label{secderivation}
This section is devoted to prove Theorem~\ref{maintheorem}. 
We first define subsets $\Gamma_0$, $\Gamma_1$, and $\Gamma_2$ of the policy space $\Gamma$ as follows. 
\begin{itemize}
\item[$\Gamma_0$]: The space of policies with three-stage separation structure explained in Section~\ref{secmainresult}.
\item[$\Gamma_1$]: The space of linear sensors without memory followed by linear deterministic feedback control. Namely, a policy $\PP(u^T\|x^T)$ in $\Gamma_1$ can be expressed as a composition of
\begin{equation}
\label{eqgamma1awgn} \by_t=C_t\bx_t+\bv_t, \;\; \bv_t\sim \mathcal{N}(0,V_t)
\end{equation}
and $\bu_t=l_t(\by^t)$, where $C_t\in \mathbb{R}^{r_t\times n_t}$, $r_t$ is some nonnegative integer, $V_t \succ 0$, and $l_t(\cdot)$ is a linear map.
\item[$\Gamma_2$]: The space of linear policies without state memory. Namely,  a policy $\PP(u^T\|x^T)$ in $\Gamma_2$ can be expressed as
\begin{equation}
\label{linpolicy2}
\bu_t=M_t\bx_t+N_t\bu^{t-1}+\bg_t, \;\; \bg_t\sim \mathcal{N}(0,G_t)
\end{equation}
with some matrices $M_t, N_t$, and $G_t\succeq 0$.
\end{itemize}

\subsection{Proof outline}
To prove Theorem~\ref{maintheorem}, we establish a chain of inequalities:
\begin{subequations}
\label{ineqchain}
\begin{align}
&\inf_{\gamma\in\Gamma: J_\gamma\leq D} I_\gamma(\bx^T\rightarrow \bu^T) \label{ineqchain1}\\
\geq & \inf_{\gamma\in\Gamma: J_\gamma\leq D} \sum\nolimits_{t=1}^T I_\gamma(\bx_t;\bu_t|\bu^{t-1}) \label{ineqchain2}\\
\geq & \inf_{\gamma\in\Gamma_2: J_\gamma\leq D} \sum\nolimits_{t=1}^T I_\gamma(\bx_t;\bu_t|\bu^{t-1}) \label{ineqchain3}\\
\geq & \inf_{\gamma\in\Gamma_1: J_\gamma\leq D} \sum\nolimits_{t=1}^T I_\gamma(\bx_t;\by_t|\by^{t-1}) \label{ineqchain4}\\
\geq & \inf_{\gamma\in\Gamma_0: J_\gamma\leq D} \sum\nolimits_{t=1}^T I_\gamma(\bx_t;\by_t|\by^{t-1}) \label{ineqchain4_2}\\
\geq & \inf_{\gamma\in\Gamma_0: J_\gamma\leq D} I_\gamma(\bx^T\rightarrow \bu^T). \label{ineqchain5}
\end{align}
\end{subequations}
Since $\Gamma_0\subset\Gamma$, clearly (\ref{ineqchain1}) $\leq$ (\ref{ineqchain5}).
Thus, showing the above chain of inequalities proves that all quantities in (\ref{ineqchain}) are equal. This observation implies that 
the search for an optimal solution to our main problem (\ref{mainprob}) can be restricted to the class $\Gamma_0$ without loss of performance.
The first inequality (\ref{ineqchain2}) is immediate from the definition of directed information. 
We prove inequalities  (\ref{ineqchain3}), (\ref{ineqchain4}), (\ref{ineqchain4_2}) and (\ref{ineqchain5}) in subsequent subsections 
\ref{subsecproof_c}, \ref{subsecproof_d}, \ref{subsecproof_d2} and \ref{subsecproof_e}.
It will follow from the proof of inequality (\ref{ineqchain5}) that an optimal solution to  (\ref{ineqchain4_2}), if exists, is also an optimal solution to (\ref{ineqchain5}).
In particular, this implies that an optimal solution to the original problem \eqref{ineqchain1}, if exists, can be found by solving a simplified problem (\ref{ineqchain4_2}). This observation establishes the sensor-filter-controller separation principle depicted in Figure~\ref{fig:sep}.

Then, we focus on solving problem (\ref{ineqchain4_2}) in Subsection \ref{subsec12dsdp}.  We show that problem (\ref{ineqchain4_2}) can be reformulated as an optimization problem in terms of $\mathsf{SNR}_t\triangleq C_t^\top V_t^{-1} C_t$, which is further converted to an SDP problem.

\subsection{Proof of inequality (\ref{ineqchain3})}
\label{subsecproof_c}
We will show that for every $\gamma_\PP=\{\PP(u_t|x^t,u^{t-1})\}_{t=1}^T\in\Gamma$ that attains a finite objective value in (\ref{ineqchain2}), there exists $\gamma_\QQ=\{\QQ(u_t|x^t,u^{t-1})\}_{t=1}^T\in\Gamma_2$ such that $J_\PP=J_\QQ$ and 
\[
\sum\nolimits_{t=1}^T I_\PP(\bx_t;\bu_t|\bu^{t-1})\geq 
\sum\nolimits_{t=1}^T I_\QQ(\bx_t;\bu_t|\bu^{t-1})
\]
where subscripts of $I$ and $J$ indicate probability measures on which these quantities are evaluated.
Without loss of generality, we assume $\PP(x^{T+1}, u^T)$ has zero-mean.
Otherwise, we can consider an alternative policy $\gamma_{\tilde{\PP}}=\{\tilde{\PP}(u_t|x^t,u^{t-1})\}_{t=1}^T$, where
\[
\tilde{\PP}(u_t|x^t\!,u^{t-1})\!\triangleq\!\PP(u_t\!+\mathbb{E}_\PP(\bu_t)|x^t\!+\mathbb{E}_\PP(\bx^t),u^{t-1}\!+\mathbb{E}_\PP(\bu^{t-1}))
\]
which generates a zero-mean joint distribution $\tilde{\PP}(x^{T+1}, u^T)$.
We have $I_{\tilde{\PP}}=I_\PP$ in view of the translation invariance of mutual information, and $J_{\tilde{\PP}}\leq J_\PP$ due to the fact that the cost function is quadratic.

First, we consider a zero-mean, jointly Gaussian probability measure $\GG(x^{T+1}, u^T)$ having the same covariance matrix as $\PP(x^{T+1}, u^T)$. 
\begin{lemma}
\label{lemmaIpg}
The following inequality holds whenever the left hand side is finite.
\begin{equation}
\label{lemipg}
\sum\nolimits_{t=1}^T I_\PP(\bx_t;\bu_t|\bu^{t-1})\geq \sum\nolimits_{t=1}^T I_\GG(\bx_t;\bu_t|\bu^{t-1})
\end{equation}
\vspace{-2ex}
\end{lemma}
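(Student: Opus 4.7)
The plan is to exploit the fact that although $\PP$ may induce an arbitrary (non-Gaussian) distribution on $\bu^T$, the plant kernel $\bx_{t+1}\mid\bx_t,\bu^t \sim \mathcal{N}(A_t\bx_t+B_t\bu_t,W_t)$ is \emph{identical} under $\PP$ and $\GG$: under $\GG$ the conditional of $\bx_{t+1}$ given $(\bx_t,\bu^t)$ is Gaussian (as $\GG$ is jointly Gaussian), it matches the first two moments of $\PP$ by construction, and those moments are exactly $A_t\bx_t+B_t\bu_t$ and $W_t$ by the plant equation. The strategy is to rewrite each per-step gap $I_\PP(\bx_t;\bu_t|\bu^{t-1})-I_\GG(\bx_t;\bu_t|\bu^{t-1})$ as an increment of a conditional relative entropy between $\PP$ and $\GG$, and then telescope using the data processing inequality along the plant dynamics.

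The first key step is the identity, valid for $s\in\{t-1,t\}$,
\[
h_\PP(\bx_t\mid\bu^s) \;=\; h_\GG(\bx_t\mid\bu^s) \;-\; \delta_t^{(s)},
\]
where $\delta_t^{(s)} := D(\PP(\bx_t|\bu^s)\,\|\,\GG(\bx_t|\bu^s)\,|\,\PP(\bu^s))$. This follows from a short moment calculation: $\GG(\bx_t|\bu^s)$ is Gaussian with mean equal to the linear MMSE estimator $\hat{\bx}_{t|s}$ of $\bx_t$ from $\bu^s$ and covariance equal to the LMMSE residual $\Sigma_{t|s}$ (both computable from the second moments common to $\PP$ and $\GG$), so when expanding $-\mathbb{E}_\PP[\log\GG(\bx_t|\bu^s)]$ the quadratic term $\mathbb{E}_\PP[(\bx_t-\hat{\bx}_{t|s})^\top \Sigma_{t|s}^{-1}(\bx_t-\hat{\bx}_{t|s})]$ evaluates to the state dimension $n$ (since the unconditional LMMSE residual covariance under $\PP$ is exactly $\Sigma_{t|s}$), giving precisely $h_\GG(\bx_t|\bu^s)$. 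Writing $\delta_t^-:=\delta_t^{(t-1)}$ and $\delta_t^+:=\delta_t^{(t)}$ and subtracting the identity at $s=t$ from that at $s=t-1$ yields
\[
I_\PP(\bx_t;\bu_t|\bu^{t-1}) - I_\GG(\bx_t;\bu_t|\bu^{t-1}) \;=\; \delta_t^+ - \delta_t^-.
\]

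The second step uses that $\PP(\bx_{t+1}|\bu^t)$ and $\GG(\bx_{t+1}|\bu^t)$ are both obtained from $\PP(\bx_t|\bu^t)$ and $\GG(\bx_t|\bu^t)$ respectively by applying the \emph{same} Markov kernel $\mathcal{N}(A_t\cdot + B_t\bu_t, W_t)$; the data processing inequality then yields $\delta_{t+1}^-\leq \delta_t^+$ for each $t=1,\ldots,T-1$. Summing and rearranging,
\[
\sum_{t=1}^T(\delta_t^+-\delta_t^-) \;=\; \delta_T^+ \,-\, \delta_1^- \,+\, \sum_{t=1}^{T-1}(\delta_t^+-\delta_{t+1}^-) \;\geq\; 0,
\]
since $\delta_1^- = D(\PP(\bx_1)\|\GG(\bx_1)) = 0$ (because $\bx_1\sim\mathcal{N}(0,P_{1|0})$ is already Gaussian, and $\GG$ matches its distribution) and each remaining term is non-negative. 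The step I expect to require the most care is the first moment identity --- specifically verifying that $\GG(\bx_t|\bu^s)$ really is the Gaussian kernel with the LMMSE mean and residual covariance, so that the cross term in $-\mathbb{E}_\PP[\log\GG(\bx_t|\bu^s)]$ collapses cleanly --- but this is routine once one uses that $\GG$ is, by definition, the jointly Gaussian measure matching the second moments of $\PP$.
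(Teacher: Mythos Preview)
Your argument is correct and takes a genuinely different route from the paper's. The paper factors each ratio $\frac{d\PP(x_t|u^t)}{d\PP(x_t|u^{t-1})}$ through the \emph{backward} conditional $\PP(x_t|x_{t+1},u^t)$ (their Lemma~\ref{lemabcd}(c)), telescopes the product into terms of the form $\int\log\frac{d\PP(x_t|x_{t+1},u^t)}{d\PP(x_t|x_{t-1},u^{t-1})}\,d\PP$, and then shows each such term dominates its Gaussian counterpart by identifying the difference as a conditional KL divergence; the leverage point there is the same as yours, namely $\PP(x_t|x_{t-1},u^{t-1})=\GG(x_t|x_{t-1},u^{t-1})$ combined with second-moment matching. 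Your route is more direct: the cross-entropy identity $h_\PP=h_\GG-\delta$ isolates the per-step gap as $\delta_t^+-\delta_t^-$, and then the data-processing inequality for the common plant kernel (applied \emph{forward} in time rather than through a backward conditional) handles the telescope in one line. What the paper's approach buys is measure-theoretic hygiene --- it never writes a differential entropy and tracks absolute continuity explicitly through a sequence of lemmas (Lemmas~\ref{lemsupport}--\ref{lemmagpg}), so degenerate cases require no separate treatment. Your approach is cleaner and makes the mechanism (DPI along the plant dynamics) completely transparent, at the cost of implicitly needing $\Sigma_{t|s}\succ 0$ and finite $h_\PP(\bx_t|\bu^s)$; both hold under the stated hypotheses, since $W_{t-1}\succ 0$ forces $\Sigma_{t|t-1}\succ 0$ and bounds $h_\PP(\bx_t|\bu^{t-1})$ away from $\pm\infty$, while finiteness of $I_\PP(\bx_t;\bu_t|\bu^{t-1})$ gives $\PP(x_t|u^t)\ll\PP(x_t|u^{t-1})$ and hence a Lebesgue density for $\PP(x_t|u^t)$ a.e., which in turn rules out singular $\Sigma_{t|t}$.
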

\begin{proof}
See Appendix~\ref{appproofPG}.
\end{proof}

Next, we are going to construct a policy $\gamma_\QQ=\{\QQ(u_t|x^t,u^{t-1})\}_{t=1}^T\in\Gamma_2$ using a jointly Gaussian measure $\GG(x^{T+1},u^T)$.
Let $E_t\bx_t+F_t\bu^{t-1}$ be the least mean-square error estimate of $\bu_t$ given $(\bx_t,\bu^{t-1})$ in $\GG(x^{T+1}, u^T)$, and let $V_t$ be the resulting estimation error covariance matrix. Define a stochastic kernel $\QQ(u_t|x_t,u^{t-1})$ by $\QQ(u_t|x_t,u^{t-1})=\mathcal{N}(E_t\bx_t+F_t\bu^{t-1}, V_t)$.
By construction, $\QQ(u_t|x_t,u^{t-1})$ satisfies\footnote{Equation $\diff \PP(x,y)=\diff\PP(y|x)\diff\PP(x)$ is a short-hand notation for $\PP(B_X \times B_Y)=\int_{B_X}\PP(B_Y|x)\diff\PP(x)\; \forall B_X\in\mathcal{B}_\mathcal{X}, B_Y\in \mathcal{B}_\mathcal{Y}$.}
\begin{equation}
\label{defconds}
\diff\GG(x_t, u^t)=\diff\QQ(u_t|x_t,u^{t-1})\diff\GG(x_t, u^{t-1}).
\end{equation}
Define $\QQ(x^{T+1}, u^T)$ recursively by
\begin{align}
\diff\QQ(x^t, u^{t-1})&=\diff\PP(x_t|x_{t-1},u_{t-1})\diff\QQ(x^{t-1}, u^{t-1}) \label{defs1}\\
\diff\QQ(x^t, u^t)&=\diff\QQ(u_t|x_t,u^{t-1})\diff\QQ(x^t, u^{t-1}) \label{defs2}
\end{align}
where $\PP(x_t|x_{t-1},u_{t-1})$ is a stochastic kernel defined by  (\ref{eqsystem}).
The following identity holds between two Gaussian measures $\GG(x^{T+1}, u^T)$ and $\QQ(x^{T+1}, u^T)$.
\begin{lemma}
\label{lemmags}
$\GG(x_{t+1}, u^t)=\QQ(x_{t+1}, u^t) \;\;  \forall t=1,\cdots,T.$
\end{lemma}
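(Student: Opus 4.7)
I would prove the identity by induction on $t\in\{1,\ldots,T\}$, propagating the (slightly stronger) inductive hypothesis $\GG(x_t,u^{t-1})=\QQ(x_t,u^{t-1})$. The base case is $\GG(x_1)=\QQ(x_1)=\mathcal{N}(0,P_{1|0})$: the original $\PP(x_1)$ is already Gaussian, so equals $\GG(x_1)$, and the recursion (\ref{defs1}) forces $\QQ(x_1)=\PP(x_1)$.

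The inductive step has two halves. First, I would append $\bu_t$: by (\ref{defs2}), $\QQ(x_t,u^t)=\QQ(u_t|x_t,u^{t-1})\,\QQ(x_t,u^{t-1})$. The first factor equals $\GG(u_t|x_t,u^{t-1})$---this is exactly (\ref{defconds}), which in turn reflects the Gaussianity of $\GG$: for jointly Gaussian laws the conditional has affine LMMSE mean $E_t x_t+F_t u^{t-1}$ and conditional covariance $V_t$, matching the definition of $\QQ(u_t|x_t,u^{t-1})$. The second factor matches $\GG(x_t,u^{t-1})$ by the inductive hypothesis, so $\QQ(x_t,u^t)=\GG(x_t,u^t)$.

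Second, I would advance from $x_t$ to $x_{t+1}$: construction (\ref{defs1}) at index $t+1$ gives $\QQ(x_{t+1}|x^t,u^t)=\PP(x_{t+1}|x_t,u_t)=\mathcal{N}(A_t x_t+B_t u_t,W_t)$, which is free of $x^{t-1}$ and hence equals $\QQ(x_{t+1}|x_t,u^t)$ after integrating out $x^{t-1}$. I would then show the identical formula holds under $\GG$: the innovation $\bw_t=\bx_{t+1}-A_t\bx_t-B_t\bu_t$ is independent of $(\bx^t,\bu^t)$ under $\PP$ by the system dynamics, hence uncorrelated; since $\GG$ preserves second moments and is jointly Gaussian, uncorrelatedness promotes to independence, so $\GG(x_{t+1}|x^t,u^t)=\mathcal{N}(A_t x_t+B_t u_t,W_t)$, which again collapses to $\GG(x_{t+1}|x_t,u^t)$. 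Thus $\QQ(x_{t+1}|x_t,u^t)=\GG(x_{t+1}|x_t,u^t)$, and multiplying by the matched joint $\QQ(x_t,u^t)=\GG(x_t,u^t)$ followed by integrating out $x_t$ delivers $\QQ(x_{t+1},u^t)=\GG(x_{t+1},u^t)$, which is simultaneously the claim at $t$ and the hypothesis for $t+1$.

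The only genuine obstacle is the Gaussianity-based upgrade from uncorrelatedness to full independence of $\bw_t$ under $\GG$: without it, the plant transition kernel would not be preserved by the covariance-matched Gaussianization, since higher-order information about $\PP$ is generally lost. Everything else is a routine chain-rule manipulation of the Radon-Nikodym definitions (\ref{defs1})-(\ref{defs2}) together with the key construction identity (\ref{defconds}).
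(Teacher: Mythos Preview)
Your proposal is correct and follows essentially the same induction argument as the paper: both use the inductive hypothesis $\GG(x_t,u^{t-1})=\QQ(x_t,u^{t-1})$, apply the construction identity (\ref{defconds}) to match the $\bu_t$-conditional, and then invoke preservation of the plant transition kernel under Gaussianization to advance the state. The paper packages the latter step as a separate lemma (Lemma~\ref{lemmagpg}(a), which in turn cites a result of Kim), whereas you argue it directly from uncorrelatedness plus joint Gaussianity; this is the same content, and your identification of that step as the one nontrivial ingredient is exactly right.
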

\begin{proof}
See Appendix~\ref{appprooflemmags}.
\end{proof}

We are now ready to prove (\ref{ineqchain3}).
First, replacing a policy $\gamma_\PP$ with a new policy $\gamma_\QQ$ does not change the LQG control cost.
\begin{subequations}
\begin{align}
J_{\gamma_\PP}=&\int \left( \|x_{t+1}\|_{Q_t}^2+\|u_t\|_{R_t}^2 \right) \diff\PP(x_{t+1}, u^t) \nonumber \\
=&\int \left( \|x_{t+1}\|_{Q_t}^2+\|u_t\|_{R_t}^2 \right) \diff\GG(x_{t+1}, u^t) \label{Jpgs1} \\
=&\int \left( \|x_{t+1}\|_{Q_t}^2+\|u_t\|_{R_t}^2 \right) \diff\QQ(x_{t+1}, u^t) \label{Jpgs2} \\
=&J_{\gamma_\QQ}. \nonumber
\end{align}
\end{subequations}
Equality (\ref{Jpgs1}) holds since $\PP$ and $\GG$ have the same second order moments. Step (\ref{Jpgs2}) follows from Lemma \ref{lemmags}.
Second, replacing $\gamma_\PP$ with $\gamma_\QQ$ does not increase the information cost.
\begin{subequations}
\begin{align}
\sum\nolimits_{t=1}^T I_\PP(\bx_t;\bu_t|\bu^{t-1}) &\geq \sum\nolimits_{t=1}^T I_\GG(\bx_t;\bu_t|\bu^{t-1}) \label{ineqipgs1}\\
&=\sum\nolimits_{t=1}^T I_\QQ(\bx_t;\bu_t|\bu^{t-1}). \label{ineqipgs2}
\end{align}
\end{subequations}
The inequality (\ref{ineqipgs1}) is due to Lemma \ref{lemmaIpg}. In (\ref{ineqipgs2}), $I_\GG(\bx_t;\bu_t|\bu^{t-1})=I_\QQ(\bx_t;\bu_t|\bu^{t-1})$ holds for every $t=1,\cdots, T$ because of Lemma \ref{lemmags}.

\subsection{Proof of inequality (\ref{ineqchain4})}
\label{subsecproof_d}
Given a policy $\gamma_2\in\Gamma_2$, we are going to construct a policy $\gamma_1\in  \Gamma_1$ such that $J_{\gamma_1}=J_{\gamma_2}$ and
\begin{equation}
\label{igamma12}
I_{\gamma_2}(\bx_t;\bu_t|\bu^{t-1})=I_{\gamma_1}(\bx_t;\by_t|\by^{t-1})
\end{equation}
for every $ t=1,\cdots,T$. Let $\gamma_2\in\Gamma_2$ be given by
\[
\bu_t=M_t\bx_t+N_t\bu^{t-1}+\bg_t, \;\; \bg_t\sim\mathcal{N}(0,G_t).
\]
Define $\tilde{\by}_t\triangleq M_t \bx_t+\bg_t$. If we write $N_t\bu^{t-1}=N_{t,t-1}\bu_{t-1}+\cdots+N_{t,1}\bu_1$, it can be seen that $\bu^t$ and $\tilde{\by}^t$ are related by an invertible linear map
\begin{equation}
\label{eqmatrixytilde}
\left[\begin{array}{c} \vspace{-1ex}
\tilde{\by}_1 \\ \vspace{-1ex}
\vdots \\ \vdots \\ \tilde{\by}_t
\end{array}\right]
=\left[\begin{array}{cccc} \vspace{-1ex}
I \!&\! 0 \!&\!\! \cdots \!&\! 0 \\ \vspace{-1ex}
-N_{2,1} \!&\! I \!&\!\!  \!&\! \vdots \\ 
\vdots \!&\!  \!&\!\! \ddots \!&\! 0 \\
-N_{t,1} \!&\! \cdots \!&\!\! -N_{t,t-1} \!&\! I
\end{array}\right]\left[\begin{array}{c} \vspace{-1ex}
\bu_1 \\ \vspace{-1ex}
\vdots \\ \vdots \\ \bu_t
\end{array}\right]
\end{equation}
for every $t=1,\cdots,T$. 
Hence,
\begin{align}
I(\bx_t;\bu_t|\bu^{t-1})&=I(\bx_t;\tilde{\by}_t+N_t\bu^{t-1}|\tilde{\by}^{t-1},\bu^{t-1}) \nonumber \\
&=I(\bx_t;\tilde{\by}_t|\tilde{\by}^{t-1}). \label{Ixuxytilde}
\end{align}
{\upd
Let $G_t=E_t^\top V_t E_t$ be the (thin) singular value decomposition.
Since we assume \eqref{Ixuxytilde} is bounded, we must have
\begin{equation}
\label{eqimincl}
\text{Im}(M_t)\subseteq \text{Im}(G_t)=\text{Im}(E_t^\top).
\end{equation}
Otherwise, the component of $\bu_t$ in $\text{Im}(G_t)^\perp$ depends deterministically on $\bx_t$ and \eqref{Ixuxytilde} is unbounded.
Now, define
$
\by_t  \triangleq  E_t \tilde{\by}_t  = E_t M_t \bx_t + E_t \bg_t, \;\; \bg_t\sim\mathcal{N}(0,G_t).$
Then, we have
\begin{align*}
E_t^\top \by_t & =  E_t^\top E_t M_t \bx_t + E_t^\top E_t \bg_t, \;\; \bg_t\sim\mathcal{N}(0,G_t) \\
& = M_t \bx_t + \bg_t  = \tilde{\by}_t.
\end{align*}
In the second line, we used the facts that $E_t^\top E_t M_t=M_t$ and $E_t^\top E_t \bg_t=\bg_t$ under \eqref{eqimincl}.
Thus, we have $\by_t= E_t \tilde{\by}_t$ and $\tilde{\by}_t=E_t^\top \by_t$. This implies that $\by_t$ and $\tilde{\by}_t$ contain statistically equivalent information, and that
}
\begin{equation}
\label{Ixytildexy}
I(\bx_t;\tilde{\by}_t|\tilde{\by}^{t-1})=I(\bx_t;\by_t|\by^{t-1}).
\end{equation}
Also, since $\bu_t$  depends linearly on $\tilde{\by}^t$ by (\ref{eqmatrixytilde}), there exists a linear map $l_t$ such that
\begin{equation}
\label{eqlinearmapu}
\bu_t=l_t(\by^t).
\end{equation}
Setting $C_t \triangleq E_tM_t$, construct a policy $\gamma_1\in\Gamma_1$ using $\by_t\triangleq E_t \tilde{\by}_t=C_t\bx_t+\bv_t$ with $\bv_t\sim\mathcal{N}(0,V_t)$ and a linear map (\ref{eqlinearmapu}).
Since joint distribution $\PP(x^{T+1},u^T)$ is the same under $\gamma_1$ and $\gamma_2$, we have $J_{\gamma_1}=J_{\gamma_2}$. 
From (\ref{Ixuxytilde}) and (\ref{Ixytildexy}), we also have (\ref{igamma12}).

\subsection{Proof of inequality (\ref{ineqchain4_2})}
\label{subsecproof_d2}
Notice that for every $\gamma\in\Gamma_1$, conditional mutual information can be written in terms of $P_{t|t}=\text{Cov}(\bx_t-\mathbb{E}(\bx_t|\by^t,\bu^{t-1}))$:
\begin{align}
&I_\gamma(\bx_t;\by_t|\by^{t-1})\nonumber \\
=&I_\gamma(\bx_t;\by_t|\by^{t-1},\bu^{t-1}) \nonumber \\
=&h(\bx_t|\by^{t-1},\bu^{t-1})-h(\bx_t|\by^t,\bu^{t-1}) \nonumber \\
=&\tfrac{1}{2}\log\det (A_{t-\!1}P_{t-1|t-1}A_{t-\!1}^\top\!\!+\!W_{t-\!1})\!-\!\tfrac{1}{2}\log\det P_{t|t}.
\label{eqmiinp}
\end{align}
Moreover, for every fixed sensor equation \eqref{eqgamma1awgn}, covariance matrices are determined by the Kalman filtering formula
\[
P_{t|t}=( (A_{t-1}P_{t-1|t-1}A_{t-1}^\top\!+\!W_{t-1})^{-1}+\mathsf{SNR}_t)^{-1}.
\]
Hence, conditional mutual information (\ref{eqmiinp}) depends only on the choice of $\{\mathsf{SNR}_t\}_{t=1}^T$, and is independent of the choice of a linear map $l_t$.
On the other hand, the LQG control cost $J_\gamma$ depends on the choice of $l_t$. 
In particular, for every fixed linear sensor (\ref{eqgamma1awgn}), it follows from the standard filter-controller separation principle in the LQG control theory that the optimal $l_t$ that minimizes $J_\gamma$ is a composition of a Kalman filter $\hat{\bx}_t=\mathbb{E}(\bx_t|\by^t,\bu^{t-1})$ and a certainty equivalence controller $\bu_t=K_t\hat{\bx}_t$.
This implies that an optimal solution $\gamma$ can always be found in the class $\Gamma_0$, establishing  the inequality in (\ref{ineqchain4_2}).

For a fixed linear sensor (\ref{eqgamma1awgn}), an explicit form of the Kalman filter and the certainty equivalence controller is given by Steps 1 and 3 in Section~\ref{secmainresult}.
Derivation is standard and hence is omitted. It is also possible to write $J_\gamma$ explicitly as
\begin{equation}
\label{eqjgamma}
J_\gamma\!=\!\text{Tr}(N_1P_{1|0})+\!\sum\nolimits_{t=1}^T\!\left(\text{Tr}(W_t S_t)\!+\!\text{Tr}(\Theta_tP_{t|t})\right).
\end{equation}
Derivation of (\ref{eqjgamma}) is also straightforward, and can be found in \cite[Lemma 1]{1503.01848}.

\subsection{Proof of inequality (\ref{ineqchain5})}
\label{subsecproof_e}
For every fixed $\gamma\in\Gamma_0$, by Lemma~\ref{lemmadirectedinfo} we have
\begin{align*}
\; I_\gamma(\bx^T\!\!\rightarrow \!\bu^T) 
\leq &\; I_\gamma(\bx^T\rightarrow \by^T\| \bu_+^{T-1}) \\
=& \sum\nolimits_{t=1}^T \!I_\gamma(\bx^t;\by_t|\by^{t-1},\bu^{t-1}) \\
=& \sum\nolimits_{t=1}^T \!I_\gamma(\bx^t;\by_t|\by^{t-1}) \\
=& \sum\nolimits_{t=1}^T \!I_\gamma(\bx_t;\by_t|\by^{t-1})\!+\!I_\gamma(\bx^{t-1};\by_t|\bx_t,\by^{t-1})\\
=&\sum\nolimits_{t=1}^T \!I_\gamma(\bx_t;\by_t|\by^{t-1}).
\end{align*}
The last equality holds since, by construction, $\by_t=C_t\bx_t+\bv_t$ is conditionally independent of $\bx^{t-1}$ given $\bx_t$. 

\subsection{SDP formulation of problem (\ref{ineqchain4_2})}
\label{subsec12dsdp}

Invoking \eqref{eqmiinp} and 
\eqref{eqjgamma} hold for every $\gamma\in\Gamma_0$, problem  (\ref{ineqchain4_2}) can be written as an optimization problem in terms of $\{P_{t|t},\mathsf{SNR}_t\}_{t=1}^T$ as
\begin{align*}
\min & \sum_{t=2}^T \left( \frac{1}{2} \log\det (A_{t-1}P_{t-1|t-1}A_{t-1}^\top\!+\!W_t)\!-\! \frac{1}{2} \log\det P_{t|t}  \right)\\
&+\frac{1}{2}\log\det P_{1|0}-\frac{1}{2}\log\det P_{1|1} \\
\text{s.t. } & \text{Tr}(N_1P_{1|0})\!+\!\sum\nolimits_{t=1}^T\!\left(\text{Tr}(W_t S_t)\!+\!\text{Tr}(\Theta_tP_{t|t})\right) \leq D, \\
&  P_{1|1}^{-1}\!=\!P_{1|0}^{-1}+\mathsf{SNR}_1, \\ 
&  P_{t|t}^{-1}\!=\!(A_{t-1}P_{\!t-1|t-1}A_{t-1}^\top\!\!+\!W_{t-1})^{-1}\!+\!\mathsf{SNR}_t, \; t=2, ... , T \\
&  \mathsf{SNR}_t \succeq 0, \;\; t=1, ... , T. 
\end{align*}
This problem can be reformulated as a max-det problem as follows.
First, variables $\{\mathsf{SNR}_t\}_{t=1}^T$ are eliminated from the problem by replacing the last three constraints with equivalent conditions
\begin{align*}
&0\prec P_{1|1} \preceq P_{1|0}, \\
&0\prec P_{t|t} \preceq A_{t-1}P_{t-1|t-1}A_{t-1}^\top+W_{t-1}, \; t=2, ... ,T.
\end{align*}
Second, the following equalities can be used for $t=1, ... , T-1$ to rewrite the objective function:
\begin{subequations}
\begin{align}
&\frac{1}{2}\log\det (A_tP_{t|t}A_t^\top+W_t)-\frac{1}{2}\log\det P_{t|t} \nonumber \\
=\;&\frac{1}{2}\log\det(P_{t|t}^{-1}+A_t^\top W_t^{-1}A_t)+\frac{1}{2}\log\det W_t  \label{derivemaxdet1} \\
=\;& \inf_{\Pi_t}\quad \frac{1}{2}\log\det \Pi_t^{-1}+\frac{1}{2}\log\det W_t \label{derivemaxdet2} \\
& \text{ s.t. }\quad  0\prec \Pi_t\preceq (P_{t|t}^{-1}+A_t^\top W_t^{-1}A_t)^{-1} \nonumber \\
=\;& \inf_{\Pi_t}\quad \frac{1}{2}\log\det \Pi_t^{-1}+\frac{1}{2}\log\det W_t \label{derivemaxdet4} \\
& \text{ s.t. }\quad  \Pi_t \succ 0, \; \left[\begin{array}{cc}P_{t|t}-\Pi_t & P_{t|t}A_t^\top \\ A_tP_{t|t} & A_tP_{t|t}A_t^\top+W_t \end{array}\right]\succeq 0. \nonumber
\end{align}
\end{subequations}
In step (\ref{derivemaxdet1}), we have used the matrix determinant theorem \cite[Theorem 18.1.1]{harville1997matrix}. An additional variable $\Pi_t$ is introduced in step (\ref{derivemaxdet2}). The constraint is rewritten using the matrix inversion lemma in (\ref{derivemaxdet4}).

These two techniques allow us to formulate the above problem as a max-det problem (\ref{optprob3}). 
Thus, we have shown that Steps 1-3 in Section~\ref{secmainresult} provide an optimal solution to problem (\ref{ineqchain4}), which is also an optimal solution to the original problem (\ref{ineqchain1}).

\section{Extension to partially observable plants}
\label{secpo}

 \ifdefined\DOUBLECOLUMN	
\begin{figure}[t]
    \centering
    \includegraphics[width=.7\columnwidth]{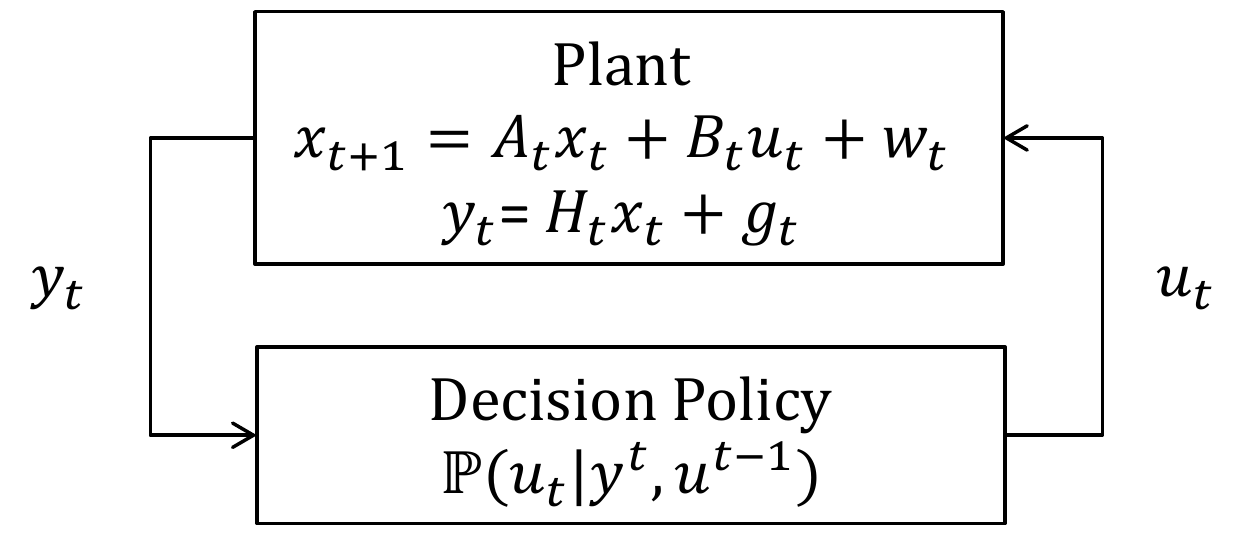}
    \caption{LQG control of partially observable plant with minimum directed information.}
    \label{fig:poprob}
\vspace{-3ex}
\end{figure}
\begin{figure*}[t]
    \centering
    \includegraphics[width=0.7\textwidth]{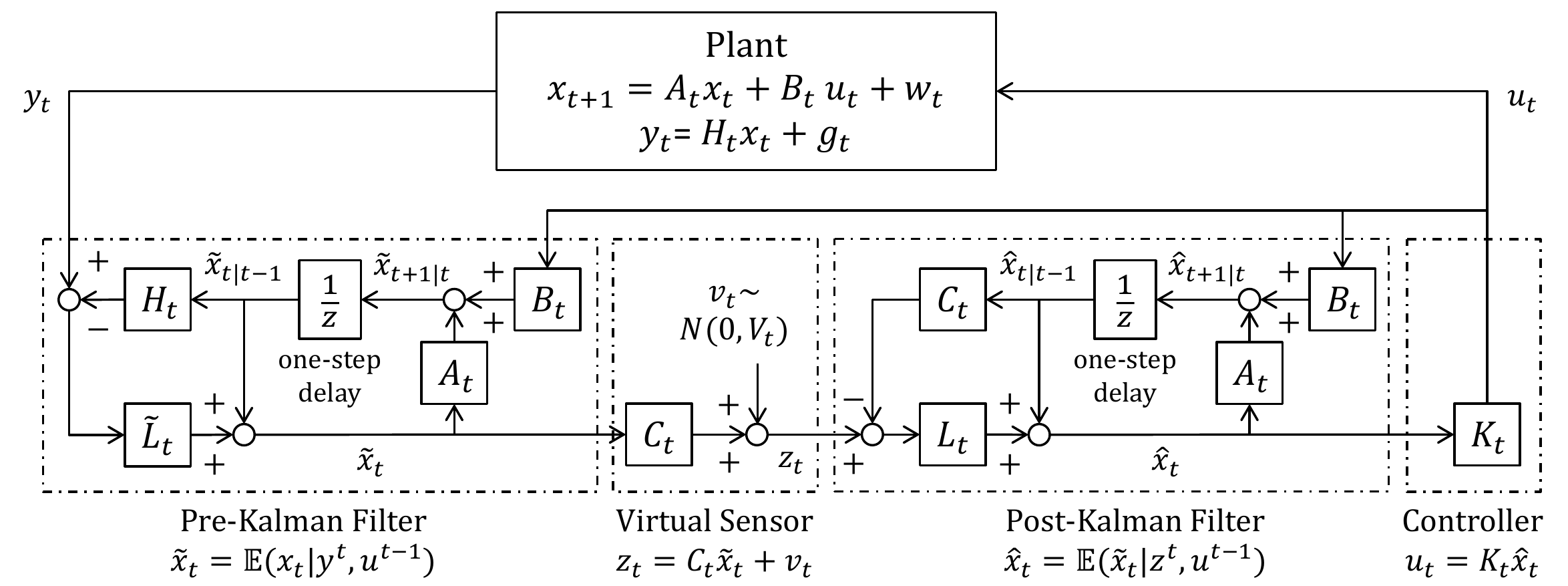}
    \caption{Structure of optimal control policy for problem \eqref{partiallyobservableprob}. Matrices $\tilde{L}_t$, $C_t$, $V_t$, $L_t$ and $K_t$ are determined by SDP-based algorithm in Section~\ref{secpo}.}
    \label{fig:sep4}
    \vspace{-3ex}
\end{figure*}
\fi
 \ifdefined\SINGLECOLUMN	
\begin{figure}[t]
    \centering
    \includegraphics[width=.5\columnwidth]{partiallyobservable3.pdf}
    \caption{LQG control of partially observable plant with minimum directed information.}
    \label{fig:poprob}
\vspace{-3ex}
\end{figure}
\begin{figure*}[t]
    \centering
    \includegraphics[width=0.8\textwidth]{fourstage_detail.pdf}
    \caption{Structure of optimal control policy for problem \eqref{partiallyobservableprob}. Matrices $\tilde{L}_t$, $C_t$, $V_t$, $L_t$ and $K_t$ are determined by SDP-based algorithm in Appendix~\ref{apppartially}.}
    \label{fig:sep4}
\end{figure*}
\fi

So far, our focus has been on a control system in Figure~\ref{fig:mainprob} in which the decision policy has an access to the state $\bx_t$ of the plant. Often in practice, the state of the plant is only partially observable through a given physical sensor mechanism. We now consider an extension of the control synthesis to partially observable plants.

Consider a control system in Figure~\ref{fig:poprob} where a state space model \eqref{eqsystem} and a sensor model $\by_t=H_t\bx_t+\bg_t$ are given. We assume that initial state $\bx_1\sim \mathcal{N}(0,P_{1|0})$, $P_{1|0}\succ 0$ and noise processes $\bw_t\sim\mathcal{N}(0,W_t)$, $W_t\succ 0$,  $\bg_t\sim\mathcal{N}(0,G_t)$, $G_t\succeq 0$, $t=1, ... , T$ are mutually independent. We also assume that $H_t$ has full row rank for $t=1, ... , T$. Consider the following problem:
\begin{subequations}
\label{partiallyobservableprob}
\begin{align}
\min_{\gamma\in\Gamma} & \quad  I_\gamma (\by^T\rightarrow \bu^T) \label{objdi3}\\
\text{ s.t. } &\quad  J_\gamma (\bx^{T+1},\bu^T) \leq D.
\end{align}
\end{subequations}
where $\Gamma$ is the space of policies $\gamma=\PP(u^T\|y^T)$. Relevant optimization problems to \eqref{partiallyobservableprob} are considered in \cite{silva2011framework,silva2011achievable,silva2013characterization}  in the context of Section~\ref{secmotivation}.
Based on the control synthesis developed so far for fully observable plants, it can be shown that the optimal control policy can be realized by the architecture shown in Figure~\ref{fig:sep4}. Moreover, as in the fully observable cases, the optimal control policy can be synthesized by an SDP-based algorithm. 

{\bf Step~1.} (Pre-Kalman filter design)  Design a Kalman filter
\begin{subequations}
\label{eqprekf}
\begin{align}
&\tilde{\bx}_t=\tilde{\bx}_{t|t-1}+\tilde{L}_t(\by_t-H_t\tilde{\bx}_{t|t-1}) \\
&\tilde{\bx}_{t+1|t}=A_t\tilde{\bx}_t+B_t\bu_t, \;\tilde{\bx}_{1|0}=0
\end{align}
\end{subequations}
where  the Kalman gains $\{\tilde{L}_t\}_{t=1}^{T+1}$ are computed by
\begin{align*}
&\tilde{L}_t=\tilde{P}_{t|t-1}H_t^\top(H_t\tilde{P}_{t|t-1}H_t^\top+G_t)^{-1} \\
&\tilde{P}_{t|t}=(I-\tilde{L}_tH_t)\tilde{P}_{t|t-1} \\
&\tilde{P}_{t+1|t}=A_t\tilde{P}_{t|t}A_t^\top+W_t.
\end{align*}
Matrices $\Psi_t=\tilde{L}_{t+1}(H_{t+1}\tilde{P}_{t+1|t}H_{t+1}^\top+G_{t+1})\tilde{L}_{t+1}^\top$ will be used in Step 3.

{\bf Step~2.} (Controller design) Determine  feedback control gains $K_t$ via the backward Riccati recursion:
\begin{subequations}
\begin{align}
S_t&=\begin{cases} Q_t & \text{ if } t=T \\ Q_t+N_{t+1} & \text{ if } t=1,\cdots, T-1 \end{cases} \\
M_t&=B_t^\top S_t B_t + R_t \\
N_t&=A_t^\top (S_t-S_t B_t M_t^{-1} B_t^\top S_t) A_t \\
K_t&= -M_t^{-1} B_t^\top S_t A_t \\
\Theta_t&= K_t^\top M_t K_t 
\end{align}
\end{subequations}
Positive semidefinite matrices $\Theta_t$ will be used in Step~3.

{\bf Step~3.} (Virtual sensor design) Solve a max-det problem with respect to $\{P_{t|t}, \Pi_t\}_{t=1}^T$:
\begin{subequations}
\label{probmaxdetpartially}
\begin{align}
\min & \quad \frac{1}{2} \sum\nolimits_{t=1}^T \log\det \Pi_t^{-1} + c_1 \\
\text{s.t.} & \quad \sum\nolimits_{t=1}^T \text{Tr}(\Theta_t P_{t|t}) + c_2 \leq D \\
& \quad  \Pi_t \succ  0,  \label{optprob3Pi_po}\\
& \quad  P_{1|1}\preceq P_{1|0}, P_{T|T}=\Pi_T,  \\
& \quad  P_{t+1|t+1}\preceq A_t P_{t|t}A_t^\top +\Psi_t, \label{optprob3e_po} \\
&\hspace{1ex} \left[\!\! \begin{array}{cc}P_{t|t}\!-\!\Pi_t\!\!\! &\!\! P_{t|t}A_t^\top \\
A_tP_{t|t} \!\!\!&\!\! A_t P_{t|t}A_t^\top +\Psi_t \end{array}\!\!\right]\! \succeq\! 0. \label{optprob3f_po}
\end{align}
\end{subequations} 
The constraint (\ref{optprob3Pi_po}) is imposed for every $t=1, \cdots, T$, while (\ref{optprob3e_po}) and (\ref{optprob3f_po}) are for every $t=1, \cdots, T-1$.
Constants $c_1$ and $c_2$ are  given by
\begin{align*}
c_1&=\frac{1}{2}\log\det{P_{1|0}}+\frac{1}{2}\sum\nolimits_{t=1}^{T-1} \log\det \Psi_t \\
c_2&=\text{Tr}(N_1P_{1|0})+\sum\nolimits_{t=1}^T \text{Tr}(\Psi_t S_t).
\end{align*} 
If $\Psi_t$ is singular for some $t$, consider an alternative max-det problem suggested in Remark~\ref{remsingularw}.
Set $r_t=\text{rank}(P_{t|t}^{-1}-P_{t|t-1}^{-1})$, where 
\[
P_{t|t-1}\triangleq A_{t-1}P_{t-1|t-1}A_{t-1}^\top + W_{t-1}, t=2,\cdots, T.
\]
 Choose matrices $C_t\in \mathbb{R}^{r_t\times n_t}$ and $V_t \in \mathbb{S}_{++}^{r_t}$ so that
\begin{equation}
\label{cvconst}
 C_t^\top V_t^{-1}C_t= P_{t|t}^{-1}-P_{t|t-1}^{-1}
\end{equation}
for $t=1,\cdots, T$.  In case of $r_t=0$, $C_t$ and $V_t$ are considered to be null (zero dimensional) matrices.

{\bf Step~4.} (Post-Kalman filter design) Design a Kalman filter
\begin{subequations}
\label{eqkalmanfilter}
\begin{align}
&\hat{\bx}_t=\hat{\bx}_{t|t-1}+\hat{L}_t(\bz_t-C_t\hat{\bx}_{t|t-1}) \label{eqkalmanfilter1_po}\\
&\hat{\bx}_{t+1|t}=A_t\hat{\bx}_t+B_t \bu_t. \label{eqkalmanfilter2_po}
\end{align}
\end{subequations}
where Kalman gains $\hat{L}_t$ are computed by
\begin{equation}
\label{eqkalmangain}
\hat{L}_t=P_{t|t-1}C_t^\top (C_t P_{t|t-1}C_t^\top + V_t)^{-1}.
\end{equation}
If $r_t=0$, $L_t$ is a null matrix and (\ref{eqkalmanfilter1_po}) is simply replaced by $\hat{\bx}_t=\hat{\bx}_{t|t-1}$.

\begin{theorem}
\label{theorempartially}
An optimal policy for the problem \eqref{partiallyobservableprob} exists if and only if the max-det problem \eqref{probmaxdetpartially} is feasible, and the optimal value of \eqref{partiallyobservableprob} coincides with the optimal value of \eqref{probmaxdetpartially}.
If the optimal value of \eqref{partiallyobservableprob} is finite, an optimal policy can be realized by an interconnection of a pre-Kalman filter, a virtual sensor, post-Kalman filter, and a certainty equivalence controller as shown in Figure~\ref{fig:sep4}.
Moreover, each of these components can be constructed by an SDP-based algorithm summarized in Steps 1-4 above.
\end{theorem}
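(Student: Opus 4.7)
The strategy is to reduce the partially observable problem \eqref{partiallyobservableprob} to an equivalent fully observable instance and then invoke Theorem~\ref{maintheorem}. Let $\tilde{\bx}_t \triangleq \mathbb{E}(\bx_t \mid \by^t, \bu^{t-1})$ be the state estimate produced by the pre-Kalman filter \eqref{eqprekf}, and let $\tilde{P}_{t|t}$ be its error covariance. The classical innovations representation yields
\[
\tilde{\bx}_{t+1} = A_t \tilde{\bx}_t + B_t \bu_t + \tilde{\bw}_t,
\]
where $\tilde{\bw}_t \triangleq \tilde{L}_{t+1}(\by_{t+1} - H_{t+1}(A_t\tilde{\bx}_t + B_t\bu_t))$ is zero-mean Gaussian with covariance $\Psi_t$ and is independent of $\tilde{\bx}^t, \bu^t$ and of past innovations. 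In other words, the pre-Kalman filter exposes an ``innovations plant'' that is structurally identical to \eqref{eqsystem} but with process noise covariance $\Psi_t$ instead of $W_t$, and is fully observable by construction.

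\textbf{Rewriting the cost.} Using orthogonality of the Kalman estimation error $\bx_t - \tilde{\bx}_t$ against the sigma-algebra generated by $(\tilde{\bx}^t, \bu^{t-1})$, the original quadratic cost decomposes as $J_\gamma(\bx^{T+1}, \bu^T) = \tilde{J}_\gamma(\tilde{\bx}^{T+1}, \bu^T) + c_0$, where $\tilde{J}_\gamma$ is the same quadratic form applied to the innovations process and $c_0 \triangleq \sum_t \mathrm{Tr}(Q_t \tilde{P}_{t+1|t+1})$ is a policy-independent constant. Hence the constraint $J_\gamma \leq D$ is equivalent to $\tilde{J}_\gamma \leq D - c_0$ on the innovations plant, and the LQR-based parameters $(S_t, K_t, \Theta_t)$ coincide with those of the original problem because $(A_t, B_t, Q_t, R_t)$ are unchanged.

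\textbf{Reducing the directed information.} For any $\gamma = \PP(u^T \| y^T) \in \Gamma$, the process $\tilde{\bx}^t$ is a deterministic affine function of $(\by^t, \bu^{t-1})$, so the data-processing inequality gives $I_\gamma(\by^T \rightarrow \bu^T) \geq I_\gamma(\tilde{\bx}^T \rightarrow \bu^T)$. Mimicking the construction in Section~\ref{subsecproof_c}, one shows that replacing the original kernel by $\QQ(u_t \mid \tilde{x}^t, u^{t-1}) \triangleq \int \PP(u_t \mid y^t, u^{t-1})\, \PP(\mathrm{d}y^t \mid \tilde{x}^t, u^{t-1})$ preserves the joint law of $(\tilde{\bx}^{T+1}, \bu^T)$---because $\tilde{\bx}_t$ is a sufficient statistic for $\bx_t$ given $(\by^t, \bu^{t-1})$---and makes the data-processing step an equality. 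Thus the search for an optimizer in \eqref{partiallyobservableprob} can be restricted without loss of performance to policies of the form $\PP(u^T \| \tilde{x}^T)$, under which the objective reduces to $I_\gamma(\tilde{\bx}^T \rightarrow \bu^T)$.

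\textbf{Closing the argument.} The reduced problem is exactly \eqref{mainprob} for the fully observable innovations plant with noise covariance $\Psi_t$ and cost budget $D - c_0$. Theorem~\ref{maintheorem} then yields an optimal three-stage realization---virtual sensor, Kalman filter on $\tilde{\bx}_t$, certainty-equivalence controller---whose parameters are computed from \eqref{optprob3} with $W_t$ replaced by $\Psi_t$. This substitution, after absorbing $c_0$ and the $\log\det \Psi_t$ terms into the constants, produces exactly \eqref{probmaxdetpartially}; cascading the pre-Kalman filter ahead of the three-stage architecture delivers the four-stage structure of Figure~\ref{fig:sep4} and verifies the parameter prescriptions of Steps~1--4. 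The case in which $\Psi_t$ is rank-deficient (possible since $H_t$ need not be square) is handled by the singular-noise variant from Remark~\ref{remsingularw}, as already indicated in Step~3. The main obstacle is the sufficiency step: one must simultaneously control the LQG cost and the directed information under the reconditioning kernel $\QQ(u_t \mid \tilde{x}^t, u^{t-1})$, carefully combining the Gaussianization argument of Section~\ref{subsecproof_c} with the fact that $\tilde{\bx}_t$ is a sufficient statistic in the presence of closed-loop feedback.
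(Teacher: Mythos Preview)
Your overall reduction strategy---use the innovations representation to convert the partially observable problem into a fully observable one for the process $\tilde{\bx}_t$, then invoke Theorem~\ref{maintheorem}---is the same as the paper's, and your treatment of the cost (orthogonality decomposition yielding the policy-independent offset $\sum_t\text{Tr}(Q_t\tilde P_{t+1|t+1})$) matches Lemma~\ref{propxtilde}.

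The difference lies in how you handle the directed-information step. You argue via a data-processing inequality $I_\gamma(\by^T\!\rightarrow\!\bu^T)\geq I_\gamma(\tilde\bx^T\!\rightarrow\!\bu^T)$ and then try to close the gap with a sufficiency/reconditioning kernel, which you yourself flag as ``the main obstacle.'' The paper avoids this obstacle entirely by observing that, under the standing assumptions $W_t\succ 0$ and $H_t$ full row rank, the Kalman gain $\tilde L_t$ has full column rank and hence the pre-Kalman filter \eqref{eqprekf} is \emph{causally invertible} (Lemma~\ref{lemprekfinv}): given $\bu^{t-1}$, the map $\by^t\mapsto\tilde\bx^t$ is a bijection. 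This yields the exact equality $I(\by^t;\bu_t|\bu^{t-1})=I(\tilde\bx^t;\bu_t|\bu^{t-1})$ for \emph{every} policy, and simultaneously puts the policy classes $\PP(u^T\|y^T)$ and $\PP(u^T\|\tilde x^T)$ in bijection (Lemma~\ref{propprekf}). No sufficiency argument, no reconditioning kernel, and no delicate control of the closed-loop law under $\QQ$ is needed.

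In short, your plan is not wrong, but the ``obstacle'' you identify dissolves once you notice the invertibility of the pre-Kalman filter; this is the one idea the paper exploits that your outline misses. If you were to pursue your sufficiency route, it would in fact collapse to the same thing, since $\PP(\mathrm{d}y^t\mid\tilde x^t,u^{t-1})$ is a Dirac mass under invertibility.
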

\begin{proof}
\ifdefined\LONGVERSION
See Appendix~\ref{apppartially}.
\fi
\ifdefined\SHORTVERSION
See \cite{tanaka2015lqg}.
\fi
\end{proof}

\section{Conclusion}
\label{secconclusion}
In this paper, we considered an optimal control problem in which directed information from the observed output of the plant to the control input is minimized subject to the constraint that the control policy achieves the desired LQG control performance. When the state of the plant is directly observable, the optimal control policy can be realized by a three-stage structure comprised of (1) linear sensor with additive Gaussian noise, (2) Kalman filter, and (3) certainty equivalence controller. An extension to partially observable plants was also discussed. In both cases, the optimal policy is synthesized by an efficient numerical algorithm based on SDP.

\appendix
\subsection{Data-processing inequality for directed information}
\label{adddataprocessing}
Lemma~\ref{lemmadirectedinfo} is shown as follows.
Notice that the following chain of equalities hold for every $t=1,\cdots,T$.
\begin{subequations}
\label{eqlb1-4}
\begin{align}
&I(\bx^t;\ba_t|\ba^{t-1},\bu^{t-1})-I(\bx^t;\bu_t|\bu^{t-1}) \nonumber \\
=&I(\bx^t;\ba_t, \bu_t|\ba^{t-1},\bu^{t-1})-I(\bx^t;\bu_t|\bu^{t-1}) \label{eqlb1} \\
=&I(\bx^t;\ba^t|\bu^t)-I(\bx^t;\ba^{t-1}|\bu^{t-1}) \label{eqlb2} \\
=&I(\bx^t;\ba^t|\bu^t)-I(\bx^{t-1};\ba^{t-1}|\bu^{t-1})\nonumber \\
&\hspace{12ex}-I(\bx_t;\ba^{t-1}|\bx^{t-1},\bu^{t-1}) \label{eqlb3} \\
=&I(\bx^t;\ba^t|\bu^t)-I(\bx^{t-1};\ba^{t-1}|\bu^{t-1}). \label{eqlb4}
\end{align}
\end{subequations}
When $t=1$, the above identity is understood to mean
$I(\bx_1;\ba_1)-I(\bx_1;\bu_1)=I(\bx_1;\ba_1|\bu_1)$
which clearly holds as $\bx_1$--$\ba_1$--$\bu_1$ form a Markov chain.
Equation (\ref{eqlb1}) holds because $I(\bx^t;\ba_t,\bu_t|\ba^{t-1},\bu^{t-1})=I(\bx^t;\ba_t|\ba^{t-1},\bu^{t-1})+I(\bx^t;\bu_t|\ba^t,\bu^{t-1})$
and the second term is zero since $\bx^t$--$(\ba^t,\bu^{t-1})$--$\bu_t$ form a Markov chain. Equation (\ref{eqlb2}) is obtained by applying the chain rule for mutual information in two different ways:
\begin{align*}
&I(\bx^t;\ba^t,\bu_t|\bu^{t-1}) \\
&=I(\bx^t;\ba^{t-1}|\bu^{t-1})+I(\bx^t;\ba_t,\bu_t|\ba^{t-1},\bu^{t-1}) \\
&=I(\bx^t;\bu_t|\bu^{t-1})+I(\bx^t;\ba^t|\bu^t).
\end{align*}
The chain rule is applied again in step (\ref{eqlb3}). Finally, (\ref{eqlb4}) follows as
$\ba^{t-1}$--$(\bx^{t-1},\bu^{t-1})$--$\bx_t$ form a Markov chain.

Now, the desired inequality can be verified by computing the right hand side minus the left hand side as
\begin{subequations}
\label{telescope}
\begin{align}
&\sum\nolimits_{t=1}^T \left[ I(\bx^t;\ba_t|\ba^{t-1},\bu^{t-1})-I(\bx^t;\bu_t|\bu^{t-1})\right] \nonumber \\
=&\sum\nolimits_{t=1}^T \left[ I(\bx^t;\ba^t|\bu^t)-I(\bx^{t-1};\ba^{t-1}|\bu^{t-1})\right] \label{telescope1} \\
=&I(\bx^T;\ba^T|\bu^T) \geq 0. \label{telescope2}
\end{align}
\end{subequations}
In step (\ref{telescope1}), the identity (\ref{eqlb1-4}) is used. The telescoping sum (\ref{telescope1}) cancels all but the final term (\ref{telescope2}).

\ifdefined\LONGVERSION
\subsection{Some basic lemmas for probability measures}
\begin{lemma}\label{lemfubini}
Let $\PP_{\bx,\by}$ be a joint probability measure on $(\mathcal{X}\times\mathcal{Y},\mathcal{B}_\mathcal{X}\otimes\mathcal{B}_\mathcal{Y})$. Let $\PP_\bx$ and $\PP_\by$ be the marginal probability measures, $\PP_\bx\otimes \PP_\by $ be the product measure, and $\PP_{\bx|y}$ be a Borel measurable stochastic kernel such that
\begin{equation}
\label{existstockernel}
\PP_{\bx,\by}(B_X\times B_Y)
=\int_{B_Y}
\PP_{\bx|y}(B_X|y)\PP_\by(\diff y)
\end{equation}
for every $B_X\in \mathcal{B}_\mathcal{X}$ and $B_Y\in \mathcal{B}_\mathcal{Y}$.
If $\PP_{\bx,\by}\ll\PP_\bx\otimes \PP_\by$, then $\PP_{\bx|y}\ll\PP_\bx, \PP_\by -a.e.$, and
\[
\frac{\diff \PP_{\bx,\by}}{\diff (\PP_\bx\otimes \PP_\by)}=\frac{\diff \PP_{\bx|y}}{\diff \PP_\bx}, \PP_\by-a.e..
\]
\vspace{-2ex}
\end{lemma}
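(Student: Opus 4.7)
The plan is to apply Fubini's theorem to the Radon-Nikodym derivative. Let $f(x,y)\triangleq\frac{\diff\PP_{\bx,\by}}{\diff(\PP_\bx\otimes\PP_\by)}(x,y)$, which exists (up to a $\PP_\bx\otimes\PP_\by$-null set) by the absolute continuity hypothesis. For any $B_X\in\mathcal{B}_\mathcal{X}$ and $B_Y\in\mathcal{B}_\mathcal{Y}$, Fubini gives
\[
\PP_{\bx,\by}(B_X\times B_Y)=\int_{B_X\times B_Y} f(x,y)\,(\PP_\bx\otimes\PP_\by)(\diff x,\diff y)=\int_{B_Y}\!\left(\int_{B_X} f(x,y)\,\PP_\bx(\diff x)\right)\PP_\by(\diff y).
\]
Comparing this with the disintegration identity \eqref{existstockernel}, I would conclude that for every $B_Y\in\mathcal{B}_\mathcal{Y}$,
\[
\int_{B_Y}\!\Bigl(\PP_{\bx|y}(B_X|y)-\int_{B_X} f(x,y)\,\PP_\bx(\diff x)\Bigr)\PP_\by(\diff y)=0,
\]
so that, for each fixed $B_X$, the identity $\PP_{\bx|y}(B_X|y)=\int_{B_X} f(x,y)\,\PP_\bx(\diff x)$ holds for $\PP_\by$-almost every $y$.

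The main obstacle is promoting this ``for each fixed $B_X$'' conclusion to a \emph{single} $\PP_\by$-null set that works simultaneously for all $B_X\in\mathcal{B}_\mathcal{X}$. I would address this by selecting a countable $\pi$-system $\mathcal{A}_X$ that generates $\mathcal{B}_\mathcal{X}$, which is available because all random variables are Euclidean valued and so the Borel $\sigma$-algebra is countably generated. Taking the union of the (countably many) exceptional $\PP_\by$-null sets indexed by $B_X\in\mathcal{A}_X$ produces one $\PP_\by$-null set $N$ outside of which the identity holds for every element of $\mathcal{A}_X$. For each $y\notin N$, both $B_X\mapsto\PP_{\bx|y}(B_X|y)$ and $\mu_y(B_X)\triangleq\int_{B_X} f(x,y)\,\PP_\bx(\diff x)$ are finite Borel measures on $\mathcal{X}$, so a Dynkin $\pi$--$\lambda$ (monotone class) argument extends the identity from $\mathcal{A}_X$ to all of $\mathcal{B}_\mathcal{X}$.

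To conclude: for every $y\notin N$, the measure $\PP_{\bx|y}(\cdot|y)$ coincides with $\mu_y$, which is by construction absolutely continuous with respect to $\PP_\bx$ with density $x\mapsto f(x,y)$. This yields simultaneously the two assertions of the lemma, namely $\PP_{\bx|y}\ll\PP_\bx$ for $\PP_\by$-a.e.\ $y$, and $\frac{\diff\PP_{\bx|y}}{\diff\PP_\bx}(x)=f(x,y)$ for $\PP_\by$-a.e.\ $y$. A small subtlety worth verifying along the way is measurability of $y\mapsto \mu_y(B_X)$, which follows from Tonelli applied to the nonnegative integrand $f(x,y)\mathbf{1}_{B_X}(x)$; this ensures that the comparison with the given measurable kernel $\PP_{\bx|y}(B_X|\cdot)$ is well-posed.
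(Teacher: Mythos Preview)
Your proposal is correct and follows essentially the same approach as the paper: define $f=\tfrac{\diff\PP_{\bx,\by}}{\diff(\PP_\bx\otimes\PP_\by)}$, apply Fubini to obtain $\PP_{\bx|y}(B_X|y)=\int_{B_X}f(x,y)\,\PP_\bx(\diff x)$ for $\PP_\by$-a.e.\ $y$, and read off both conclusions. In fact you are more careful than the paper, which asserts \eqref{resultlemfubini} ``$\PP_\by$-a.e.'' without addressing whether the exceptional set depends on $B_X$; your countable $\pi$-system plus Dynkin argument fills exactly that gap.
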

\begin{proof}
Suppose $\PP_{\bx,\by}\ll\PP_\bx\otimes \PP_\by$, and let $f(x,y)=\frac{\diff\PP_{\bx,\by}}{\diff (\PP_\bx\otimes\PP_\by)}$ be the Radon-Nikodym derivative. For every $B_X\in \mathcal{B}_\mathcal{X}$ and $B_Y\in \mathcal{B}_\mathcal{Y}$, we have
\begin{align}
\PP(B_X\times B_Y)=&\int_{B_X\times B_Y} f(x,y) \diff(\PP_\bx\otimes\PP_\by)(x,y) \nonumber \\
=&\int_{B_Y}\left[\int_{B_X}f(x,y)\diff \PP_\bx(x) \right]\diff \PP_\by(y) \label{eqfubini}
\end{align}
The first line is by definition of the Radon-Nikodym derivative. The second equality holds due to the Fubini's theorem \cite{folland1999real}, since clearly $f\in L^1(\PP_\bx\otimes \PP_\by)$. Comparing (\ref{existstockernel}) and (\ref{eqfubini}), we have 
\begin{equation}
\label{resultlemfubini}
\PP_{\bx|y}(B_X|y)=\int_{B_X} f(x,y)\diff \PP_\bx(x), \PP_\by -a.e..
\end{equation}
It follows from (\ref{resultlemfubini}) that $\PP_\bx(B_X)=0\Rightarrow \PP_{\bx|y}(B_X|y)=0$ holds 
$\PP_\by-a.e.$. Also,  (\ref{resultlemfubini}) implies $f(x,y)=\frac{\diff \PP_{\bx|y}}{\diff \PP_\bx}, \PP_\by-a.e.$.
\end{proof}

\begin{lemma}\label{lemkim}
Let $\PP_{\bx,\by,\bz}$ be a zero-mean Borel probability measure on $\mathcal{X}\times\mathcal{Y}\times\mathcal{Z}$, where $\mathcal{X}$, $\mathcal{Y}$, and $\mathcal{Z}$ are Euclidean spaces. 
Suppose $\PP_{\bx,\by,\bz}$ has a covariance matrix $\Sigma_{x,y,z}$, and there exists a matrix $L$ such that $\bz-L\by$ is independent of $\bx$ and $\by$ on $\PP_{\bx,\by,\bz}$. (This implies $\bx$--$\by$--$\bz$ form a Markov chain on $\PP_{\bx,\by,\bz}$.) Let $\GG_{\bx,\by,\bz}$ be a zero-mean, jointly Gaussian probability measure with the same covariance matrix $\Sigma_{x,y,z}$. Then $\bx$--$\by$--$\bz$ form a Markov chain on $\GG_{\bx,\by,\bz}$.
\end{lemma}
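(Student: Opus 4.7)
The plan is to reduce the Markov property under $\GG$ to a statement about covariances, exploiting the fact that for jointly Gaussian random variables, zero covariance implies independence.

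First, I would extract from the hypothesis on $\PP$ the purely second-moment conditions that encode the independence of $\bz - L\by$ from $(\bx,\by)$. Specifically, the hypothesis implies
\[
\mathbb{E}_\PP[(\bz-L\by)\bx^\top]=0, \qquad \mathbb{E}_\PP[(\bz-L\by)\by^\top]=0,
\]
i.e.\ $\Sigma_{zx}=L\Sigma_{yx}$ and $\Sigma_{zy}=L\Sigma_{yy}$. Since $\GG$ is defined to have the \emph{same} covariance matrix $\Sigma_{x,y,z}$, these two identities also hold under $\GG$.

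Next, since $(\bx,\by,\bz)$ is jointly Gaussian under $\GG$, the affine transformation $(\bx,\by,\bz-L\by)$ is jointly Gaussian as well, with $\mathrm{Cov}_\GG(\bz-L\by,\bx)=0$ and $\mathrm{Cov}_\GG(\bz-L\by,\by)=0$. For jointly Gaussian variables, uncorrelatedness is equivalent to independence, so $\bz-L\by$ is independent of $(\bx,\by)$ under $\GG$. In particular, $\bz-L\by$ is independent of $\bx$ conditionally on $\by$.

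Finally, I would use the decomposition $\bz=L\by+(\bz-L\by)$. Conditionally on $\by$, the first term is a deterministic function of $\by$, while the second term is independent of $\bx$ given $\by$. Hence $\bz$ is conditionally independent of $\bx$ given $\by$ under $\GG$, which is exactly the Markov property $\bx$--$\by$--$\bz$. The only subtlety (and the mildest ``obstacle'' in an otherwise routine argument) is keeping track of the fact that the matrix $L$ is not required to be unique when $\Sigma_{yy}$ is singular: any $L$ witnessing the independence on $\PP$ suffices, because the above argument only uses that $\bz-L\by$ is uncorrelated with $(\bx,\by)$, a covariance-level property that is preserved when passing to $\GG$.
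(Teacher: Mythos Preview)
Your argument is correct. The paper does not actually prove this lemma; it simply cites \cite[Lemma 3.2]{kim2010feedback}. Your self-contained proof---extracting the covariance identities $\Sigma_{zx}=L\Sigma_{yx}$ and $\Sigma_{zy}=L\Sigma_{yy}$ from the independence hypothesis on $\PP$, transferring them to $\GG$ via the shared covariance, and then using that uncorrelated jointly Gaussian variables are independent to conclude $\bz-L\by\perp(\bx,\by)$ under $\GG$---is exactly the natural way to prove this and is essentially what the cited reference does as well. Your closing remark on the non-uniqueness of $L$ when $\Sigma_{yy}$ is singular is well placed and correctly identifies that only the covariance-level consequences of the hypothesis are used.
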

\begin{proof}
See \cite[Lemma 3.2]{kim2010feedback}.
\end{proof}

\begin{lemma}\label{lemlineargaussian}
Let $\bx$ be an $(\mathbb{R}^n,\mathcal{B}_{\mathbb{R}^n})$-valued zero mean random variable with covariance $\Sigma_x \succeq 0$. Define an $(\mathbb{R}^m,\mathcal{B}_{\mathbb{R}^m})$-valued random variable $\by$ by $\by=A\bx+\bv$ where $A$ is a matrix and $\bv\sim\mathcal{N}(0, \Sigma_v)$ is a random variable independent of $\bx$. Let $(\bx_G, \by_G)$ be zero-mean, jointly Gaussian random variables, and suppose that $(\bx, \by)$ and $(\bx_G, \by_G)$ have the same covariance matrix.
Then $\by_G$ can be written as $\by_G=A\bx_G+\bv$ with $\bv\sim\mathcal{N}(0, \Sigma_v)$ independent of $\bx_G$.
\end{lemma}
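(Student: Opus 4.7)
The plan is to construct the noise $\bv$ explicitly by defining $\tilde{\bv} \triangleq \by_G - A\bx_G$ and then verifying, via a direct covariance computation, that $\tilde{\bv}$ is Gaussian with covariance $\Sigma_v$ and is independent of $\bx_G$. The whole argument rests on the fact that for jointly Gaussian random variables, zero correlation is equivalent to independence, which is exactly the structural advantage provided by passing from $(\bx,\by)$ to $(\bx_G,\by_G)$.

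First, I would record the covariance structure of $(\bx,\by)$ induced by the relation $\by = A\bx + \bv$ with $\bv$ independent of $\bx$:
\begin{align*}
\text{Cov}(\bx) &= \Sigma_x, \quad \text{Cov}(\bx,\by) = \Sigma_x A^\top, \\
\text{Cov}(\by) &= A\Sigma_x A^\top + \Sigma_v.
\end{align*}
By hypothesis, $(\bx_G,\by_G)$ has the identical covariance matrix.

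Next, I would set $\tilde{\bv} \triangleq \by_G - A\bx_G$. Since $(\bx_G,\by_G)$ is jointly Gaussian, the pair $(\bx_G,\tilde{\bv})$ is also jointly Gaussian, as it is an affine transformation of $(\bx_G,\by_G)$; and $\tilde{\bv}$ is zero-mean. A one-line cross-covariance computation gives
\[
\text{Cov}(\tilde{\bv},\bx_G) = \text{Cov}(\by_G,\bx_G) - A\,\text{Cov}(\bx_G) = A\Sigma_x - A\Sigma_x = 0,
\]
so $\tilde{\bv}$ and $\bx_G$ are uncorrelated and therefore, being jointly Gaussian, independent. A similar expansion yields
\[
\text{Cov}(\tilde{\bv}) = \text{Cov}(\by_G) - A\,\text{Cov}(\bx_G,\by_G)^\top\!\!\! - \text{Cov}(\bx_G,\by_G)A^\top\!\! + A\Sigma_x A^\top = \Sigma_v.
\]
Hence $\tilde{\bv} \sim \mathcal{N}(0,\Sigma_v)$, and by construction $\by_G = A\bx_G + \tilde{\bv}$, which is the desired representation (with $\bv$ relabeled as $\tilde{\bv}$).

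There is no real obstacle in this proof — it is a routine second-moment calculation. The only subtlety worth flagging is that the original $\bv$ in the statement was only assumed independent of $\bx$, not necessarily Gaussian-correlated in any special way; what makes the conclusion nontrivial on the Gaussian side is the automatic upgrade from uncorrelated to independent, which would fail in general without joint Gaussianity of $(\bx_G,\by_G)$. If $\Sigma_v$ happens to be singular, the argument still goes through unchanged since we never invert $\Sigma_v$; Gaussianity is defined via the characteristic function and allows degenerate covariances.
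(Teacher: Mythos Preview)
Your proof is correct and, in fact, more direct than the paper's. The paper first invokes the existence of \emph{some} regression representation $\by_G=\bar A\bx_G+\bv$ with $\bv$ independent of $\bx_G$, then factors $\Sigma_x=RR^\top$ with $R$ full column rank, writes $\bx_G=R\bz_G$, and argues from the covariance match that $\bar A=A+S$ with $SR=0$ and $\bar\Sigma_v=\Sigma_v$, so that $(A+S)\bx_G=(A+S)R\bz_G=AR\bz_G=A\bx_G$. Your approach bypasses this detour entirely: you simply set $\tilde\bv=\by_G-A\bx_G$ and check by a two-line covariance computation that it is uncorrelated with $\bx_G$ and has covariance $\Sigma_v$, then invoke joint Gaussianity to upgrade uncorrelated to independent. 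Both arguments handle singular $\Sigma_x$ and singular $\Sigma_v$ without modification; the paper's route makes the non-uniqueness of the regression coefficient explicit (any $\bar A$ works as long as it agrees with $A$ on the support of $\bx_G$), whereas yours shows more transparently that the \emph{specific} matrix $A$ from the original model always does the job.
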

\begin{proof}
Observe
\begin{equation}
\label{covxy}
\text{cov}\left[\begin{array}{c}\bx \\ \by\end{array}\right]=
\left[\begin{array}{cc}\Sigma_x & \Sigma_x A^\top \\ A\Sigma_x & A\Sigma_x A^\top+\Sigma_v \end{array}\right].
\end{equation}
Since it must be that $\bx_G\sim\mathcal{N}(0,\Sigma_x)$, introducing a matrix $R$ with full column rank such that $\Sigma_x=RR^\top$, $\bx_G$ can be written as $\bx_G=R\bz_G$ with $\bz_G\sim\mathcal{N}(0,I)$. Since $(\bx_G, \by_G)$ are jointly Gaussian, there exists a matrix $\bar{A}$ such that 
\begin{equation}
\label{ygabar}
\by_G=\bar{A}\bx_G+\bv, \;\; \bv\sim\mathcal{N}(0,\bar{\Sigma}_v)
\end{equation}
where $\bar{\bv}$ is independent of $\bx_G$. Thus
\begin{equation}
\label{covxgyg}
\text{cov}\left[\begin{array}{c}\bx_G \\ \by_G\end{array}\right]=
\left[\begin{array}{cc}\Sigma_x & \Sigma_x \bar{A}^\top \\ \bar{A}\Sigma_x & \bar{A}\Sigma_x \bar{A}^\top+\bar{\Sigma}_v \end{array}\right].
\end{equation}
By comparing (\ref{covxy}) and (\ref{covxgyg}), it can be seen that $\bar{A}=A+S$ with $S$ satisfying $SR=0$, and $\bar{\Sigma}_v=\Sigma_v$. Then from (\ref{ygabar}),
\begin{align*}
\by_G&=(A+S)\bx_G+\bv \\
&=(A+S)R\bz_G+\bv \\
&=AR\bz_G+\bv \\
&=A\bx_G+\bv, \;\; \bv\sim\mathcal{N}(0,\Sigma_v).
\end{align*}
\end{proof}

\begin{lemma}
\label{lemdensae}
Let $\PP_{\bx,\by}$ be a zero-mean joint probability measure on $(\mathcal{X}\times\mathcal{Y},\mathcal{B}_\mathcal{X}\otimes\mathcal{B}_\mathcal{Y})$, $\mathcal{X}=\mathbb{R}^n$, $\mathcal{Y}=\mathbb{R}^m$, with a covariance matrix $\Sigma_{x,y}$. Let $\GG_{\bx,\by}$ be a zero-mean Gaussian joint probability measure with the same covariance matrix $\Sigma_{x,y}$.
If there exists a subset $\mathcal{C}_y \subseteq \mathbb{R}^m$ with $\PP_\by(\mathcal{C}_y)>0$ such that $\PP_{\bx|y}$ admits density for every $y\in \mathcal{C}_y$, then $\GG_{\bx|y}$ admits density for every $y\in\text{supp}(\GG_\by)$.
\end{lemma}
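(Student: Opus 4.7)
The plan is to argue by contrapositive: I will show that if $\GG_{\bx|y}$ fails to admit a density at some (hence every) $y\in\mathrm{supp}(\GG_{\by})$, then $\PP_{\bx|y}$ cannot admit a density on any set of positive $\PP_{\by}$-measure. The starting observation is that $\GG$ is zero-mean Gaussian, so for every $y\in\mathrm{supp}(\GG_{\by})$ the conditional $\GG_{\bx|y}$ is itself Gaussian with covariance equal to the Schur complement $\Sigma_{x|y}:=\Sigma_x-\Sigma_{xy}\Sigma_y^{+}\Sigma_{yx}$, and this covariance is independent of $y$ on the support. A Gaussian on $\mathbb{R}^n$ admits a Lebesgue density if and only if its covariance is positive definite, so the failure of $\GG_{\bx|y}$ to have a density is equivalent to the existence of a nonzero vector $a\in\mathbb{R}^n$ with $a^\top\Sigma_{x|y}a=0$.

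Given such an $a$, I would form the linear combination $\bz:=a^\top\bx-a^\top\Sigma_{xy}\Sigma_y^{+}\by$. Its mean is zero and its variance, computed purely from the covariance matrix $\Sigma_{x,y}$ (using $\Sigma_y\Sigma_y^{+}\Sigma_y=\Sigma_y$), equals $a^\top\Sigma_{x|y}a=0$. Since the variance depends only on the shared covariance, this identity holds under both $\GG$ and $\PP$. Hence under $\PP$ we have $\bz=0$ almost surely, i.e.\ $a^\top\bx=a^\top\Sigma_{xy}\Sigma_y^{+}\by$ holds $\PP$-almost everywhere.

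Next I would disintegrate this $\PP$-a.s.\ identity using the stochastic-kernel setup of Lemma~\ref{lemfubini}: writing $\diff\PP_{\bx,\by}(x,y)=\diff\PP_{\bx|y}(x|y)\diff\PP_{\by}(y)$ and applying Fubini to the indicator of $\{(x,y):a^\top x\neq a^\top\Sigma_{xy}\Sigma_y^{+}y\}$, I obtain that for $\PP_{\by}$-almost every $y$, the measure $\PP_{\bx|y}$ is concentrated on the affine hyperplane $H_y:=\{x\in\mathbb{R}^n:a^\top x=a^\top\Sigma_{xy}\Sigma_y^{+}y\}$. Since $a\neq 0$, $H_y$ has Lebesgue measure zero in $\mathbb{R}^n$, so $\PP_{\bx|y}$ cannot be absolutely continuous with respect to Lebesgue measure on $\mathbb{R}^n$ for any $y$ outside a $\PP_{\by}$-null set. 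This contradicts the hypothesis that $\PP_{\bx|y}$ admits a density on a set $\mathcal{C}_y$ with $\PP_{\by}(\mathcal{C}_y)>0$, completing the contrapositive.

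The main obstacle I anticipate is bookkeeping when $\Sigma_y$ is singular: one must use the Moore--Penrose pseudoinverse and verify the Schur-complement identities $\Sigma_y\Sigma_y^{+}\Sigma_{yx}=\Sigma_{yx}$ (which follows from $\mathrm{Im}(\Sigma_{yx})\subseteq\mathrm{Im}(\Sigma_y)$ for any jointly distributed pair), and make clear that $\mathrm{supp}(\GG_{\by})=\mathrm{Im}(\Sigma_y)$ so that $\GG_{\bx|y}$ is well-defined with the stated constant conditional covariance. Beyond this linear-algebraic care, the argument is essentially a one-line Schur-complement computation combined with the observation that zero variance of a linear functional is a covariance-only property and therefore transfers from $\GG$ to $\PP$.
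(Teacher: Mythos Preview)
Your proposal is correct and follows essentially the same contrapositive argument as the paper: both observe that the Gaussian conditional covariance is the Schur complement $\Sigma_e=\Sigma_{xx}-\Sigma_{xy}\Sigma_{yy}^{\dagger}\Sigma_{yx}$, use its singularity to find a direction $a$ (the paper uses a matrix $U$) in which the residual $\bx-\hat x(\by)$ has zero second moment, transfer this to $\PP$ because the relevant integral is a covariance-only quantity, and conclude that $\PP_{\bx|y}$ is supported on a Lebesgue-null affine set $\PP_{\by}$-a.e. The only cosmetic difference is that the paper integrates the conditional second-moment matrix $M(y)$ to obtain $\Sigma_e$, whereas you compute the variance of the scalar $a^\top\bx-a^\top\Sigma_{xy}\Sigma_y^{+}\by$ directly; the content is the same.
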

\begin{proof}
Suppose $\PP_{\bx,\by}(x, y)$ and $\GG_{\bx,\by}(x, y)$ share a covariance matrix
\[
\Sigma_{x,y}= \left[\begin{array}{cc}\Sigma_{xx} & \Sigma_{xy} \\ \Sigma_{yx} & \Sigma_{yy}  \end{array}\right] \succeq 0.
\]
Since $\GG_{\bx,\by}$ is a zero-mean Gaussian distribution, we know
\begin{equation}
\label{eqgxgiveny}
\GG_{\bx|y}\sim \mathcal{N}(\hat{x}(y), \Sigma_e) \;\; \forall y \in \mathcal{Y}
\end{equation}
with $\hat{x}(y)=\Sigma_{xy}\Sigma_{yy}^\dagger y$, $\Sigma_e=\Sigma_{xx}
-\Sigma_{xy}\Sigma_{yy}^\dagger\Sigma_{yx}$, where $\Sigma_{yy}^\dagger$ is the Moore--Penrose pseudoinverse of $\Sigma_{yy}$.
To show contrapositive, assume that there exists $y\in \text{supp}(\GG_\by)$ such that $\GG_{\bx|y}(x|y)$ does not admit a density. From (\ref{eqgxgiveny}), this means that $\Sigma_e$ is a singular matrix.
For every $y\in\mathcal{Y}$, define a covariance matrix $M(y)\triangleq \int_\mathcal{X} (x-\hat{x}(y))(x-\hat{x}(y))^\top \diff \PP_{\bx|y}(x|y)$.
Observe that
\begin{align}
 \int_\mathcal{Y} M(y) \diff\PP(y) 
=&\int_{\mathcal{X},\mathcal{Y}} (x-\hat{x}(y))(x-\hat{x}(y))^\top \diff\PP(x, y) \nonumber \\
=&\int_{\mathcal{X},\mathcal{Y}} (x-\hat{x}(y))(x-\hat{x}(y))^\top \diff\GG(x, y)  \nonumber \\
=&\Sigma_e. \label{eqmysigmae}
\end{align}
Since $\Sigma_e$ is singular, there exists a full row rank matrix $U\in\mathbb{R}^{r\times n}$, $1\leq r \leq n$ such that $U\Sigma_e U^\top=0$. From (\ref{eqmysigmae}), it follows that $UM(y) U^\top=0, \PP_\by-a.e..$
For every $y\in\mathcal{Y}$, define a subset 
 $\mathcal{C}_{x|y}\subset \mathbb{R}^n$ by
\[ \mathcal{C}_{x|y}=\{x\in\mathbb{R}^n : U(x-\hat{x}(y))=0\}. \]
By construction, $\PP_{\bx|y}(\mathcal{C}_{x|y}|y)=1, \;\; \PP_\by-a.e.$. However, clearly $\LL_\bx(\mathcal{C}_{x|y})=0$, where $\LL_\bx(x)$ is the Lebesgue measure on $\mathbb{R}^n$.
Thus $\PP_{\bx|y} \ll \LL_\bx$ fails to hold $\PP_\by-a.e.$. Hence, $\PP_{\bx|y}$ fails to admit a density $\PP_\by-a.e.$. This is a contradiction to the assumption that there exists a subset $\mathcal{C}_y \subseteq \mathbb{R}^m$ with $\PP_\by(\mathcal{C}_y)>0$ such that $\PP_{\bx|y}$ admits density for every $y\in \mathcal{C}_y$.
\end{proof}
\fi

\subsection{Proof of Lemma~\ref{lemmaIpg}}
\label{appproofPG}
\ifdefined\SHORTVERSION
We use the following technical lemmas~\ref{lemsupport},\ref{lemabcd}, and \ref{lemmagpg}. 
Proofs can be found in \cite{tanaka2015lqg}.
\fi
\begin{lemma}
\label{lemsupport}
Let $\PP$ be a zero-mean Borel probability measure on $\mathbb{R}^n$ with covariance matrix $\Sigma$. Suppose $\GG$ is a zero-mean Gaussian probability measure on $\mathbb{R}^n$ with the same covariance matrix $\Sigma$. Then $\text{supp}(\PP)\subseteq \text{supp}(\GG)$.
\end{lemma}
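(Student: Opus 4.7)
The plan is to characterize $\text{supp}(\GG)$ explicitly as the range of $\Sigma$, and then show that $\PP$ is also concentrated on this subspace using only the assumption on second moments.

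First I would establish that $\text{supp}(\GG)=\mathrm{Range}(\Sigma)$. For a zero-mean Gaussian $\GG\sim\mathcal{N}(0,\Sigma)$, if $r=\mathrm{rank}(\Sigma)$ then $\bx_G$ can be written as $\bx_G=R\bz$ with $R\in\mathbb{R}^{n\times r}$ of full column rank, $RR^\top=\Sigma$, and $\bz\sim\mathcal{N}(0,I_r)$; since $\mathcal{N}(0,I_r)$ has full support on $\mathbb{R}^r$, pushing forward by the linear map $R$ yields $\text{supp}(\GG)=R(\mathbb{R}^r)=\mathrm{Range}(\Sigma)$, which is a (closed) linear subspace of $\mathbb{R}^n$.

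Next I would show $\PP\bigl(\mathrm{Range}(\Sigma)\bigr)=1$ using only the second-moment hypothesis. Pick any $v\in\mathrm{Null}(\Sigma)$. Then under $\PP$ we have $\mathbb{E}[(v^\top\bx)^2]=v^\top \Sigma v=0$, so $v^\top\bx=0$ $\PP$-almost surely. Taking $v$ to range over a basis of $\mathrm{Null}(\Sigma)$ (a finite set), the intersection of the corresponding null events still has $\PP$-measure $1$, so $\bx\in\mathrm{Null}(\Sigma)^\perp=\mathrm{Range}(\Sigma)$ $\PP$-almost surely.

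Finally, since $\mathrm{Range}(\Sigma)$ is closed and carries full $\PP$-mass, and $\text{supp}(\PP)$ is by definition the smallest closed set of full $\PP$-measure, we conclude
\[
\text{supp}(\PP)\subseteq \mathrm{Range}(\Sigma)=\text{supp}(\GG),
\]
as required. There is essentially no obstacle here; the only subtle point is the step where one identifies $\text{supp}(\GG)$ with $\mathrm{Range}(\Sigma)$ when $\Sigma$ is singular, which is handled cleanly by the factorization $\Sigma=RR^\top$.
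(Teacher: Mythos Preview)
Your proof is correct and follows essentially the same approach as the paper: both identify $\text{supp}(\GG)$ with $\mathrm{Range}(\Sigma)$ (equivalently, $\mathrm{Null}(\Sigma)^\perp$) and then use the second-moment condition $v^\top\Sigma v=0$ for $v\in\mathrm{Null}(\Sigma)$ to force $\PP$ onto that subspace. The only cosmetic difference is that the paper phrases the second step as a contradiction (assuming $\PP$ puts mass outside $\text{supp}(\GG)$ and computing $U_2\Sigma U_2^\top$ via the integral), whereas you argue directly; the underlying content is identical.
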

\ifdefined\LONGVERSION
\begin{proof}
When $\Sigma$ is positive definite, the claim is trivial since $\text{supp}(\GG)=\mathbb{R}^n$. So assume that $\Sigma$ is singular. Then, there exists an orthonormal matrix 
\[
U=\left[\begin{array}{c}U_1 \\ U_2\end{array}\right] \text{ with } U_1\in\mathbb{R}^{p\times n}, \; U_2\in\mathbb{R}^{(n-p)\times n}, \; p<n
\]
 such that $U\Sigma U^\top=\text{diag}(\Sigma_{zz},0)$, where $\Sigma_{zz}\in\mathbb{R}^{p\times p}$ is a positive definite matrix. 
Notice that $U_2 x=0$ for every $x\in\text{supp}(\GG)$, and $U_2 x\neq 0$ for every $x\in\text{supp}(\GG)^c$. 
Suppose $\text{supp}(\PP)\subseteq \text{supp}(\GG)$ does not hold, i.e., there exists a closed set $\mathcal{C}\subseteq\text{supp}(\GG)^c$ such that $\PP(\mathcal{C})>0$. Then
\begin{align*}
&U_2\Sigma U_2^\top=\int U_2 xx^\top U_2^\top d\PP(x) \\
&=\int_{\text{supp}(\GG)}U_2 xx^\top U_2^\top d\PP(x)+\int_{\text{supp}(\GG)^c}U_2 xx^\top U_2^\top d\PP(x) \\
&=\int_{\text{supp}(\GG)^c}U_2 xx^\top U_2^\top d\PP(x)  \succeq \int_{\mathcal{C}}U_2 xx^\top U_2^\top d\PP(x) 
\end{align*}
Since $U_2 x\neq 0$ for every $x\in\mathcal{C}$, the last expression is a non-zero positive semidefinite matrix. However, by construction, we have $U_2\Sigma U_2^\top=0$. Thus, the above inequality leads to a contradiction.
\end{proof}
\fi
\begin{lemma}
\label{lemabcd}
Let $\PP(x^{T+1}, u^T)$ be  a joint probability measure generated by a policy $\gamma_\PP=\{\PP(u_t|x^t,u^{t-1})\}_{t=1}^T$ and (\ref{eqsystem}).
\begin{itemize}
\item[(a)] For each $t=1,\cdots, T$, $\PP(x_{t+1}|u^t)$ and $\PP(x_{t+1}|x_t, u^t)$ are non-degenerate Gaussian probability measures for every $x_t$ and $u^t$.
\end{itemize}
Moreover, if $I_\PP (\bx_t;\bu_t|\bu^{t-1})<+\infty$ for all $t=1,\cdots,T$, then the following statements hold.
\begin{itemize}
\item[(b)] For every $t=1,\cdots, T$,
\begin{align*}
&\PP(x_t|u^t) \ll \PP(x_t|u^{t-1}), \;\; \PP(u^t)-a.e., \text{ and } \\
&I_\PP(\bx_t;\bu_t|\bu^{t-1})=\int \log\left(\frac{\diff \PP(x_t|u^t)}{\diff\PP(x_t|u^{t-1})}\right)\diff \PP(x_t, u^t).
\end{align*}
\item[(c)] For every $t=1,\cdots, T$,
\[\PP(x_t|x_{t+1},u^t) \ll \PP(x_t|u^{t-1}), \;\; \PP(x_{t+1},u^t)-a.e..\]
Moreover, the following identity holds $\PP(x_{t+1},u^t)-a.e.$:
\begin{equation}
\label{eqrndprod}
 \frac{\diff\PP(x_t|u^t)}{\diff\PP(x_t|u^{t-1})}=\frac{\diff\PP(x_{t+1}|u^t)}{\diff\PP(x_{t+1}|x_t,u^t)}\frac{\diff\PP(x_t|x_{t+1},u^t)}{\diff\PP(x_t|u^{t-1})}.
\end{equation}
\end{itemize}
\end{lemma}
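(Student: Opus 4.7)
The plan is to handle the three parts in order, using the fact that the driving noise $\bw_t$ in \eqref{eqsystem} is independent of the $\sigma$-algebra $\sigma(\bx^t,\bu^t)$. This independence holds because the policy selects $\bu_t$ as a measurable function of $(\bx^t,\bu^{t-1})$ together with an independent randomization, whereas $\bw_t$ is drawn independently of $\bw^{t-1}$, $\bx_1$, and any such randomization.

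For part (a), conditioning on $(\bx_t=x_t,\bu^t=u^t)$ makes $\bx_{t+1}=A_tx_t+B_tu_t+\bw_t$ a deterministic affine transformation of $\bw_t$, so $\PP(x_{t+1}\,|\,x_t,u^t)=\mathcal{N}(A_tx_t+B_tu_t,W_t)$, which is non-degenerate because $W_t\succ 0$. For the marginal over $\bx_t$, I would write
\[
\PP(x_{t+1}\in\cdot\,|\,u^t)=\int \mathcal{N}(A_tx_t+B_tu_t,W_t)(\cdot)\,\diff\PP(x_t\,|\,u^t),
\]
which is a mixture of non-degenerate Gaussians. This mixture admits a strictly positive density on $\mathbb{R}^n$ and is lower-bounded, on any compact set, by a Gaussian density; this is the ``non-degeneracy'' that will be used in parts (b) and (c). The same convolution argument with the fresh Gaussian $\bw_t$ also shows $\PP(x_{t+1}\,|\,u^{t-1})$ is absolutely continuous with positive density (this is used implicitly in (b)).

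For part (b), finiteness of $I_\PP(\bx_t;\bu_t\,|\,\bu^{t-1})$ is, by definition of conditional mutual information via relative entropy, equivalent to $\PP_{\bx_t,\bu_t\,|\,u^{t-1}}\ll \PP_{\bx_t\,|\,u^{t-1}}\otimes \PP_{\bu_t\,|\,u^{t-1}}$ for $\PP(u^{t-1})$-a.e.\ $u^{t-1}$, with log-integrable derivative. I would then invoke Lemma~\ref{lemfubini} (with roles $(\bx,\by)\leftrightarrow (\bx_t,\bu_t)$ conditioned on $\bu^{t-1}$) to deduce $\PP(x_t\,|\,u^t)\ll\PP(x_t\,|\,u^{t-1})$, $\PP(u^t)$-a.e., and to identify the Radon–Nikodym derivative with $\diff\PP(x_t\,|\,u^t)/\diff\PP(x_t\,|\,u^{t-1})$. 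The integral formula for $I_\PP$ then comes from writing $\diff\PP(x_t,u^t)=\diff\PP(x_t\,|\,u^t)\diff\PP(u^t)$ and applying Fubini–Tonelli.

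For part (c), I would use Bayes' rule for regular conditional probabilities: the joint $\PP(x_t,x_{t+1}\,|\,u^t)$ factorizes in two ways, giving
\[
\diff\PP(x_{t+1}\,|\,x_t,u^t)\,\diff\PP(x_t\,|\,u^t)=\diff\PP(x_t\,|\,x_{t+1},u^t)\,\diff\PP(x_{t+1}\,|\,u^t).
\]
Since the Gaussian kernel in (a) is strictly positive, division is legitimate and rearranging yields
\[
\frac{\diff\PP(x_t\,|\,u^t)}{\diff\PP(x_t\,|\,u^{t-1})}=\frac{\diff\PP(x_{t+1}\,|\,u^t)}{\diff\PP(x_{t+1}\,|\,x_t,u^t)}\cdot\frac{\diff\PP(x_t\,|\,x_{t+1},u^t)}{\diff\PP(x_t\,|\,u^{t-1})},
\]
provided the three factors on the right are well defined. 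The Gaussian density ratio is pointwise finite and positive; absolute continuity $\PP(x_t\,|\,x_{t+1},u^t)\ll\PP(x_t\,|\,u^{t-1})$ then follows by combining (b) with the positivity of the Gaussian factor via a second application of Lemma~\ref{lemfubini} (to the joint $\PP(x_t,x_{t+1}\,|\,u^t)$ versus $\PP(x_t\,|\,u^{t-1})\otimes \PP(x_{t+1}\,|\,u^t)$).

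The main obstacle is the measure-theoretic bookkeeping: ensuring regular conditional probabilities exist (granted here since all spaces are Polish), composing the various $\PP$-almost-sure qualifiers consistently along the Bayes chain, and handling the fact that $\PP(x_{t+1}\,|\,u^t)$ in (a) is, strictly speaking, a Gaussian \emph{mixture} rather than a single Gaussian — so in downstream uses only its positive-density/non-degeneracy aspect, rather than literal Gaussianity, is safely available. Once (a) supplies the pointwise-positive Gaussian kernel, parts (b) and (c) are essentially chain-rule and Bayes' rule manipulations.
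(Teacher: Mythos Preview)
Your proposal is correct, and in one respect more careful than the paper. For (a), you rightly note that $\PP(x_{t+1}\,|\,u^t)$ is in general only a \emph{mixture} of Gaussians, not a single Gaussian --- the paper's phrasing ``non-degenerate Gaussian'' is loose here, since an arbitrary policy $\PP(u_t\,|\,x^t,u^{t-1})$ makes $\PP(x_t\,|\,u^t)$ non-Gaussian in general. What is actually true, and all that is used downstream, is that $\PP(x_{t+1}\,|\,u^t)$ has a strictly positive continuous density, which you identify correctly. Part (b) is handled identically in both: Lemma~\ref{lemfubini} applied to the conditional joint $\PP_{\bx_t,\bu_t|u^{t-1}}$ versus the product $\PP_{\bx_t|u^{t-1}}\otimes\PP_{\bu_t|u^{t-1}}$.

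For (c) the routes diverge. The paper proceeds computationally: it defines $k=\diff\PP(x_{t+1}\,|\,u^t)/\diff\PP(x_{t+1}\,|\,x_t,u^t)$ as a strictly positive continuous function, evaluates $\int k\,\diff\PP(x_t,x_{t+1},u^t)$ by factoring the joint in two different orders, and arrives at the identity $\int_{B_{X_t}} k\,\diff\PP(x_t\,|\,x_{t+1},u^t)=\PP_{\bx_t|u^t}(B_{X_t})$, from which both the absolute continuity and \eqref{eqrndprod} follow. Your route --- apply Lemma~\ref{lemfubini} a second time to $\PP(x_t,x_{t+1}\,|\,u^t)$ versus $\PP(x_t\,|\,u^{t-1})\otimes\PP(x_{t+1}\,|\,u^t)$, then read off both factorizations of the resulting Radon--Nikodym derivative --- is more structural and avoids the explicit double-integral bookkeeping. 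Both arguments ultimately rest on the same ingredient: mutual absolute continuity of $\PP(x_{t+1}\,|\,x_t,u^t)$ and $\PP(x_{t+1}\,|\,u^t)$, which is precisely what the positive-density version of (a) supplies.
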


\ifdefined\LONGVERSION
\begin{proof}

(a) This is clear since $\PP(x^{T+1}, u^T)$ is constructed using  (\ref{eqsystem}).

(b) By definition of conditional mutual information, $I_\PP (\bx_t;\bu_t|\bu^{t-1})<+\infty$ requires $I_\PP (\bx_t;\bu_t|u^{t-1})<+\infty, \PP_{\bu^{t-1}}-a.e.$.
For a fixed $u^{t-1}$, $I_\PP (\bx_t;\bu_t|u^{t-1})<+\infty$ requires $\PP_{\bx_t,\bu_t|u^{t-1}} \ll\PP_{\bx_t|u^{t-1}}\otimes\PP_{\bu_t|u^{t-1}}$ by definition of mutual information. By Lemma \ref{lemfubini}, this implies 
\vspace{-1ex}
\begin{align}
&\PP_{\bx_t|u^t}\ll\PP_{\bx_t|u^{t-1}} \text{ and } \label{lemabcd_b1}\\
&\frac{\diff\PP_{\bx_t,\bu_t|u^{t-1}}}{\diff(\PP_{\bx_t|u^{t-1}}\otimes\PP_{\bu_t|u^{t-1}})}=\frac{\diff \PP_{\bx_t|u^t}}{\diff \PP_{\bx_t|u^{t-1}}} \label{lemabcd_b2}
\end{align}
must hold $\PP(u_t|u^{t-1})-a.e.$. Since this is the case $\PP(u^{t-1})-a.e.$, we have both (\ref{lemabcd_b1}) and  (\ref{lemabcd_b2})  $\PP(u^t)-a.e.$.
Hence
\vspace{-1ex}
\begin{align*}
&I_\PP(\bx_t;\bu_t|\bu^{t-1}) \\
&=\int \log\left(\frac{\diff\PP_{\bx_t,\bu_t|u^{t-1}}}{\diff (\PP_{\bx_t|u^{t-1}}\otimes\PP_{\bu_t|u^{t-1}})}\right)\diff\PP(x_t,u^t) \\
&=\int \log\left(\frac{\diff\PP_{\bx_t|u^t}}{\diff\PP_{\bx_t|u^{t-1}}}\right)\diff\PP(x_t,u^t).
\end{align*}

(c) Let $B_{X_t}\in\mathcal{B}_{\mathcal{X}_t}$, $B_{X_{t+1}}\in\mathcal{B}_{\mathcal{X}_{t+1}}$, $B_{U^t}\in\mathcal{B}_{\mathcal{U}^t}$ be arbitrary Borel sets. Since both $\PP(x_{t+1}|u^t)$ and $\PP(x_{t+1}|x_t,u^t)$ are non-degenerate Gaussian probability measures, there exists a continuous map $f:\mathcal{X}_t\times\mathcal{X}_{t+1}\times\mathcal{U}^t\rightarrow (0,+\infty)$ such that 
\vspace{-1ex}
\begin{subequations}
\begin{align}
&k(x_t,x_{t+1},u^t)=\frac{\diff \PP(x_{t+1}|u^t)}{\diff \PP(x_{t+1}|x_t,u^t)}, \text{ or} \label{eqkrnddef0}\\
&\!\int_{B_{X_{t+1}}}\hspace{-4ex}k(x_t,x_{t+1},u^t)\diff \PP(x_{t+1}|x_t,u^t)=\!\int_{B_{X_{t+1}}}\hspace{-4ex}\diff \PP(x_{t+1}|u^t). \label{eqkrnddef}
\end{align}
\end{subequations}
In what follows, we suppress the arguments of $k(x_t,x_{t+1},u^t)$ and simply write it as $k$. Next, we express
\vspace{-1ex}
\begin{equation}
\label{eqtwoways}
\int_{B_{X_t}\times B_{X_{t+1}}\times B_{U^t}} \hspace{-10ex}k \diff\PP(x_t,x_{t+1},u^t)
\end{equation}
in two different ways:
\vspace{-1ex}
\begin{subequations}
\begin{align}
\text{(\ref{eqtwoways})} 
&=\int_{B_{X_{t+1}}\!\!\times B_{U^t}}\!\!\int_{B_{X_t}}\hspace{-2ex} k\diff \PP(x_t|x_{t+1},u^t)\diff \PP(x_{t+1},u^t); \label{eqtwowaysfirst}\\
\text{(\ref{eqtwoways})}
&=\int_{B_{X_t}\times B_{U^t}}\int_{B_{X_{t+1}}}\hspace{-3ex}k \diff\PP(x_{t+1}|x_t,u^t)\diff\PP(x_t,u^t) \\
&=\int_{B_{X_t}\times B_{U^t}}\int_{B_{X_{t+1}}}\hspace{-3ex} \diff \PP(x_{t+1}|u^t)\diff\PP(x_t,u^t) \label{eqkrndapp}\\
&=\int_{B_{U^t}}\int_{B_{X_t}}\int_{B_{X_{t+1}}}
\hspace{-3ex}\diff \PP(x_{t+1}|u^t)\diff \PP(x_t|u^t)\diff \PP(u^t) \\
&=\int_{B_{X_{t+1}}\times B_{U^t}}\int_{B_{X_t}} \diff \PP(x_t|u^t)\diff \PP(x_{t+1},u^t) \\
&=\int_{B_{X_{t+1}}\times B_{U^t}}\hspace{-3ex} \PP_{\bx_t|u^t}(B_{X_t}|u^t)\diff \PP(x_{t+1},u^t). \label{eqtwowayssecond}
\end{align}
\end{subequations}
Notice that (\ref{eqkrnddef}) is used in step (\ref{eqkrndapp}). Comparing (\ref{eqtwowaysfirst}) and (\ref{eqtwowayssecond}), we have the following identity $\PP(x_{t+1},u^t)-a.e.$:
\vspace{-1ex}
\begin{equation}
\label{eqrndk}
\int_{B_{X_t}}\hspace{-2ex} k\diff \PP(x_t|x_{t+1},u^t)=\PP_{\bx_t|u^t}(B_{X_t}|u^t).
\end{equation}
Since $k(x_t,x_{t+1},u^t)$ assumes values in $(0,+\infty)$, the first claim follows from (\ref{eqrndk}).

Now, the next equalities hold $\PP(x_{t+1},u^t)-a.e.$, which establishes the second claim.
\begin{align*}
&\int_{B_{X_t}} \frac{\diff\PP(x_{t+1}|u^t)}{\diff\PP(x_{t+1}|x_t,u^t)}\frac{\diff\PP(x_t|x_{t+1},u^t)}{\diff\PP(x_t|u^{t-1})}  \diff\PP(x_t|u^{t-1}) \\
&=\int_{B_{X_t}} k(x_t,x_{t+1},u^t)\diff\PP(x_t|x_{t+1},u^t) \\
&=\PP_{\bx_t|u^t}(B_{X_t}|u^t).
\end{align*}
The identity (\ref{eqkrnddef0}) is used in the first step, and (\ref{eqrndk}) is used in the second step.
\end{proof}
\fi

\begin{lemma} 
\label{lemmagpg}
Let $\PP(x^{T+1},u^T)$ be  a joint probability measure generated by a policy $\gamma_\PP=\{\PP(u_t|x^t,u^{t-1})\}_{t=1}^T$ and (\ref{eqsystem}), and $\GG(x^{T+1}, u^T)$ be a zero-mean jointly Gaussian probability measure having the same covariance as $\PP(x^{T+1}, u^T)$.
For every $t=1,\cdots,T$, we have
\begin{itemize}[leftmargin=4ex]
\item[(a)] $\bu^{t-1}$--$(\bx_t,\bu_t)$--$\bx_{t+1}$ form a Markov chain in $\GG$.
Moreover, for every $t=1,\cdots, T$, we have
\begin{align*}
\GG(x_{t+1}|x_t,u^t)&=\GG(x_{t+1}|x_t,u_t) \\
&=\PP(x_{t+1}|x_t,u_t) \\
&=\PP(x_{t+1}|x_t,u^t) 
\end{align*}
all of which have a nondegenerate Gaussian distribution $\mathcal{N}(A_tx_t+B_tu_t,W_t)$.
\item[(b)] For each $t=1,\cdots, T$, $\GG(x_t|x_{t+1},u^t)$ is a non-degenerate Gaussian measure for every $(x_{t+1},u^t)\in\text{supp}(\GG(x_{t+1},u^t))$.
\end{itemize}
\end{lemma}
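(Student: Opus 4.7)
The plan is to prove part (a) from the linear--Gaussian structure of the plant together with Lemmas~\ref{lemkim} and \ref{lemlineargaussian}, and to obtain part (b) by pushing the density-preservation result of Lemma~\ref{lemdensae} through the Bayes chain established for $\PP$ in Lemma~\ref{lemabcd}(c).

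For part (a), I would start from \eqref{eqsystem} rewritten as $\bx_{t+1} - A_t \bx_t - B_t \bu_t = \bw_t \sim \mathcal{N}(0, W_t)$ and note that $\bw_t$ is independent of $(\bx^t, \bu^t)$ by the standing independence assumptions on the driving noise and the policy's own randomization. I would then apply Lemma~\ref{lemkim} with $\bx \leftarrow \bu^{t-1}$, $\by \leftarrow (\bx_t, \bu_t)$, $\bz \leftarrow \bx_{t+1}$, and $L \leftarrow [A_t\ B_t]$: the decomposition $\bz - L\by = \bw_t$ is independent of $(\bx, \by)$ under $\PP$, so the lemma yields the Markov chain $\bu^{t-1}$--$(\bx_t, \bu_t)$--$\bx_{t+1}$ in $\GG$. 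Next, applying Lemma~\ref{lemlineargaussian} to the Gaussian marginal of $(\bx_t, \bu_t, \bx_{t+1})$ with $\bx \leftarrow (\bx_t, \bu_t)$, $\by \leftarrow \bx_{t+1}$ and $A \leftarrow [A_t\ B_t]$ produces a representation $\bx_{t+1} = A_t \bx_t + B_t \bu_t + \tilde{\bw}_t$ under $\GG$ with $\tilde{\bw}_t \sim \mathcal{N}(0, W_t)$ independent of $(\bx_t, \bu_t)$. Combining this with the Markov chain upgrades the independence to the full vector $\bu^t$, so $\GG(x_{t+1}|x_t, u^t) = \GG(x_{t+1}|x_t, u_t) = \mathcal{N}(A_t x_t + B_t u_t, W_t)$; the matching identities for $\PP$ follow directly from \eqref{eqsystem} and the independence of $\bw_t$ from $(\bx^t, \bu^t)$.

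For part (b), I would proceed in two steps. First, Lemma~\ref{lemabcd}(a) (or the assumption $P_{1|0}\succ 0$ at the boundary $t=1$) gives that $\PP(x_t|u^{t-1})$ is absolutely continuous with respect to Lebesgue measure with a positive density. Combined with Lemma~\ref{lemabcd}(c), the domination chain $\PP(x_t|x_{t+1}, u^t) \ll \PP(x_t|u^{t-1}) \ll \LL_{\bx_t}$ shows that $\PP(x_t|x_{t+1}, u^t)$ admits a Lebesgue density for $\PP(x_{t+1}, u^t)$-a.e.\ $(x_{t+1}, u^t)$, in particular on a set of positive $\PP(x_{t+1}, u^t)$-measure. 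Second, apply Lemma~\ref{lemdensae} with $\bx \leftarrow \bx_t$ and $\by \leftarrow (\bx_{t+1}, \bu^t)$ to conclude that $\GG(x_t|x_{t+1}, u^t)$ admits a Lebesgue density for every $(x_{t+1}, u^t) \in \mathrm{supp}(\GG(x_{t+1}, u^t))$. Since $\GG$ is jointly Gaussian this conditional is a Gaussian measure, and admitting a Lebesgue density forces its covariance to be strictly positive definite, which is the non-degeneracy claimed.

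The main delicacy is that Lemma~\ref{lemabcd}(b)--(c) only hold under the finiteness hypothesis $I_\PP(\bx_t;\bu_t|\bu^{t-1}) < +\infty$, the standing hypothesis in Appendix~\ref{appproofPG} where Lemma~\ref{lemmagpg} is invoked; I would therefore make this assumption explicit at the start of (b), since without it $\GG(x_t|u^t)$ could degenerate onto a strict affine subspace of $\mathbb{R}^n$ and the non-degeneracy claim would fail. A secondary technical point in (a) is that the upgrade from ``$\tilde{\bw}_t$ independent of $(\bx_t,\bu_t)$'' delivered by Lemma~\ref{lemlineargaussian} to ``$\tilde{\bw}_t$ independent of $\bu^t$'' is not automatic and relies on the Markov chain obtained first via Lemma~\ref{lemkim}; ordering the two lemma applications in this way is what makes the identity $\GG(x_{t+1}|x_t,u^t) = \GG(x_{t+1}|x_t,u_t)$ valid.
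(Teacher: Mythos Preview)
Your proposal is correct and follows essentially the same route as the paper: part (a) via Lemma~\ref{lemkim} (to transfer the Markov chain to $\GG$) followed by Lemma~\ref{lemlineargaussian} (to recover the linear relation \eqref{eqsystem} under $\GG$), and part (b) via Lemma~\ref{lemabcd}(c) combined with Lemma~\ref{lemdensae}. Your treatment is in fact slightly more explicit than the paper's, in that you spell out the domination chain $\PP(x_t|x_{t+1},u^t)\ll\PP(x_t|u^{t-1})\ll\LL_{\bx_t}$ and you flag the standing finiteness hypothesis $I_\PP(\bx_t;\bu_t|\bu^{t-1})<+\infty$ needed for Lemma~\ref{lemabcd}(c); both points are implicit in the paper's argument.
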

\ifdefined\LONGVERSION
\begin{proof}
(a) Since $\bu^{t-1}$--$(\bx_t,\bu_t)$--$\bx_{t+1}$ form a Markov chain in $\PP$, and $\bx_{t+1}$ and $(\bx_t,\bu_t)$ are related by a linear map (\ref{eqsystem}), by Lemma \ref{lemkim} (borrowed from \cite{kim2010feedback}), $\bu^{t-1}$--$(\bx_t,\bu_t)$--$\bx_{t+1}$ form a Markov chain also in $\GG$. 
Notice that $\PP(x_{t+1}|x_t,u^t)=\PP(x_{t+1}|x_t,u_t)$ and $\GG(x_{t+1}|x_t,u^t)=\GG(x_{t+1}|x_t,u_t)$ hold since 
$\bu^{t-1}$--$(\bx_t,\bu_t)$--$\bx_{t+1}$ form a Markov chain both in $\PP$ and $\GG$. Since $\PP(x_{t+1},x_t,u_t)$ and $\GG(x_{t+1},x_t, u_t)$ have the same covariance, by Lemma \ref{lemlineargaussian}, a linear relationship (\ref{eqsystem}) holds both in $\PP$ and $\GG$. Thus $\GG(x_{t+1}|x_t,u_t)=\PP(x_{t+1}|x_t,u_t)$.

(b) From Lemma \ref{lemabcd} (c), $\PP(x_t|x_{t+1}, u^t)$ admits a density $\PP(x_{t+1} u^t)-a.e.$. Thus, by Lemma \ref{lemdensae}, $\GG(x_t|x_{t+1}, u^t)$ admits density for every $(x_{t+1},u^t) \in \text{supp}(\GG(x_{t+1},u^t))$.
\end{proof}
\fi

If the left hand side of (\ref{lemipg}) is finite, by Lemma~\ref{lemabcd}, it can be written as follows. 
\begin{subequations}
\begin{align}
&\sum\nolimits_{t=1}^T I_\PP(\bx_t;\bu_t|\bu^{t-1}) \nonumber \\
=&\sum\nolimits_{t=1}^T \int \log\left( \frac{\diff\PP(x_t|u^t)}{\diff\PP(x_t|u^{t-1})}\right)\diff\PP(x^{T+1}, u^T)  \nonumber\\
=&\!\int \!\log \left(\prod_{t=1}^T \frac{\diff\PP(x_t|u^t)}{\diff\PP(x_t|u^{t-1})}\right)\diff\PP(x^{T+1},  u^T) \nonumber \\
=&\!\int \!\!\log \!\left(\prod_{t=1}^T\! \frac{\diff\PP(x_t|x_{t+1},u^t)}{\diff\PP(x_t|u^{t-1})}\frac{\diff\PP(x_{t+1}|u^t)}{\diff\PP( x_{t+1}|x_t,u^t)}\!\right)\!\diff\PP(x^{T+1}\!\!,  u^T)\nonumber \\
=&\int \log \left(\frac{\diff\PP(x_1|x_2,u_1)}{\diff\PP(x_1)}\right)\diff\PP(x^{2}, u_1) \label{ptog1}\\
&+\sum_{t=2}^T\int \log \left(\frac{\diff\PP(x_t|x_{t+1},u^t)}{\PP(x_t|x_{t-1},u^{t-1})}\right)\diff\PP(x^{t+1}, u^t) \label{ptog2}\\
&+\int \log \left(\frac{\diff\PP(x_{T+1}|u^T)}{\diff\PP(x_{T+1}|x_T,u^T)}\right)\diff\PP(x^{T+1}, u^T) \label{ptog3}
\end{align}
\end{subequations}
The result of Lemma \ref{lemabcd} (c) is used in the third equality. 
In the final step, the the chain rule for the Radon-Nikodym derivatives  \cite[Proposition 3.9]{folland1999real} is used multiple times for telescoping cancellations.
We show that each term in (\ref{ptog1}),  (\ref{ptog2}) and  (\ref{ptog3}) does not increase by replacing the probability measure $\PP$ with $\GG$. Here we only show the case for (\ref{ptog2}), but a similar technique is also applicable to (\ref{ptog1}) and (\ref{ptog3}).
\begin{subequations}
\begin{align}
&\int \log \left(\frac{\diff\PP(x_t|x_{t+1},u^t)}{\diff\PP(x_t|x_{t-1},u^{t-1})}\right) \diff\PP(x^{t+1},u^t) \nonumber \\
&-\int \log \left(\frac{\diff\GG(x_t|x_{t+1},u^t)}{\diff\GG(x_t|x_{t-1},u^{t-1})} \right)\diff\GG(x^{t+1}, u^t) \label{rndgauss} \\
=&\int \log \left(\frac{\diff\PP(x_t|x_{t+1},u^t)}{\diff\PP(x_t|x_{t-1},u^{t-1})} \right)\diff\PP(x^{t+1}, u^t) \nonumber \\
&-\int \log \left(\frac{\diff\GG(x_t|x_{t+1},u^t)}{\diff\GG(x_t|x_{t-1},u^{t-1})}\right) \diff\PP( x^{t+1}, u^t) \label{measurereplace}\\
=&\!\int \!\!\log\!\left(\!\frac{\diff\PP(x_t|x_{t+1},u^t)}{\diff\PP(x_t|x_{t-1},u^{t-1})}\frac{\diff\GG(x_t|x_{t-1},u^{t-1})}{\diff\GG(x_t|x_{t+1},u^t)} \!\right)\!\diff\PP(x^{t+1}\!\!, u^t) \nonumber \\
=&\int \log\left(\frac{\diff\PP(x_t|x_{t+1},u^t)}{\diff\GG(x_t|x_{t+1},u^t)}\right) \diff\PP(x^{t+1}, u^t) \label{rndmult}\\
=&\!\int \!\!\left[ \int \!\!\log\left(\!\frac{\diff\PP(x_t|x_{t+1},\!u^t)}{\diff\GG(x_t|x_{t+1},\!u^t)}\!\right) \diff\PP(x_t|x_{t+1},\!u^t)\right]\! \diff\PP(x_{t+1},u^t) \nonumber \\
=& \int D \left( \PP(x_t|x_{t+1},u^t) \| \GG(x_t|x_{t+1},u^t) \right) \diff\PP(x_{t+1}, u^t) \nonumber \\
\geq & \; 0. \nonumber 
\end{align}
\end{subequations}
Due to Lemma \ref{lemmagpg}, $\log \frac{\diff\GG(x_t|x_{t+1},u^t)}{\diff\GG(x_t|x_{t-1},u^{t-1})}$ in (\ref{rndgauss}) is a quadratic function of $x^{t+1}$ and $u^{t}$ everywhere on $\text{supp}(\GG(x^{t+1}, u^t))$.
This is also the case everywhere on $\text{supp}(\PP(x^{t+1}, u^t))$ since it follows from Lemma \ref{lemsupport} that $\text{supp}(\PP(x^{t+1}, u^t)) \subseteq \text{supp}(\GG(x^{t+1}, u^t))$.
Since $\PP$ and $\GG$ have the same covariance, $\diff\GG(x^{t+1}, u^t)$ can be replaced by $\diff\PP(x^{t+1}, u^t)$ in (\ref{measurereplace}).
In (\ref{rndmult}), the chain rule of the Radon-Nikodym derivatives is used invoking that $\PP(x_t|x_{t-1}, u^{t-1})=\GG(x_t|x_{t-1}, u^{t-1})$ from Lemma \ref{lemmagpg} (a).

\subsection{Proof of Lemma~\ref{lemmags}}
\label{appprooflemmags}
Clearly $\GG(x_1)=\QQ(x_1)$ holds. Following an induction argument, assume that the claim holds for $t=k-1$. Then
\begin{subequations}
\begin{align}
\hspace{-1ex}&\diff\QQ(x_{k+1}, u^k) \nonumber \\
\hspace{-1ex}=&\!\!\int_{\mathcal{X}_k}\!\!\! \diff\QQ(x_k,x_{k+1},u^k) \nonumber \\
\hspace{-1ex}=&\!\!\int_{\mathcal{X}_k}\!\!\!\! \diff\PP(x_{k+1}|x_k,u_k)\diff\QQ(x_k,u^k) \label{lemma4_1}\\
\hspace{-1ex}=&\!\!\int_{\mathcal{X}_k}\!\!\!\! \diff\PP(x_{k+1}|x_k,u_k)\diff\QQ(u_k|x_k,u^{k-1})\diff\QQ(x_k,u^{k-1}) \label{lemma4_2}\\
\hspace{-1ex}=&\!\!\int_{\mathcal{X}_k}\!\!\!\! \diff\PP(x_{k+1}|x_k,u_k)\diff\QQ(u_k|x_k,u^{k-1})\diff\GG(x_k,u^{k-1}) \label{lemma4_3}\\
\hspace{-1ex}=&\!\!\int_{\mathcal{X}_k}\!\!\!\! \diff\PP(x_{k+1}|x_k,u_k)\diff\GG(x_k,u^k) \label{lemma4_4}\\
\hspace{-1ex}=&\!\!\int_{\mathcal{X}_k}\!\!\! \diff\GG(x_k,x_{k+1},u^k) \label{lemma4_5}\\
\hspace{-1ex}=&\diff\GG(x_{k+1},u^k). \nonumber 
\end{align}
\end{subequations}
The integral signs ``$\int_{B_{X_{k+1}}\times B_{U^k}}$" in front of each of the above expressions are omitted for simplicity.
Equations (\ref{lemma4_1}) and (\ref{lemma4_2}) are due to (\ref{defs1}) and (\ref{defs2}) respectively. In (\ref{lemma4_3}), the induction assumption $\GG(x_{k},u^{k-1})=\QQ(x_{k},u^{k-1})$ is used. Identity (\ref{lemma4_4}) follows from the definition (\ref{defconds}). The result of Lemma \ref{lemmagpg}(b) was used in (\ref{lemma4_5}).

\subsection{Proof of Theorem~\ref{theostationary} (Outline only)}
\label{apptheostationary}

First, it can be shown that the three-stage separation principle continues to hold for the infinite horizon problem (\ref{stationaryprob}).
The same idea of proof as in Section \ref{secderivation} is applicable; for every policy $\gamma_\PP=\{\PP(u_t|x^t,u^{t-1})\}_{t\in\mathbb{N}}$, there exists a linear-Gaussian policy $\gamma_\QQ=\{\QQ(u_t|x^t,u^{t-1})\}_{t\in\mathbb{N}}$ which is at least as good as $\gamma_\PP$.
Second, the optimal certainty equivalence controller gain is time-invariant.
This is because, since $(A,B)$ is stabilizable, for every finite $t$, the solution $S_t$ of the Riccati recursion (\ref{backwardriccati}) converges to the solution $S$ of (\ref{algriccati}) as $T\rightarrow \infty$ \cite[Theorem 14.5.3]{kailath2000linear}.
Third, the optimal AWGN channel design problem becomes an SDP over an infinite sequence $\{P_{t|t},\Pi_t\}_{t\in\mathbb{N}}$ similar to (\ref{optprob3}) in which ``$\sum_{t=1}^T$'' is replaced by ``$\limsup_{T\rightarrow \infty}\frac{1}{T}\sum_{t=1}^T$'' and parameters $A_t, W_t, S_t, \Theta_t$ are time-invariant. It is shown in \cite{srdstationary} that the optimality of this SDP over $\{P_{t|t},\Pi_t\}_{t\in\mathbb{N}}$ is attained by a time-invariant sequence $P_{t|t}=P, \Pi_t=\Pi \; \forall t\in\mathbb{N}$, where $P$ and $\Pi$ are the optimal solution to (\ref{ltisdp}).

\subsection{Proof of Corollary~\ref{cordatarate}}
\label{appdatarate}
We write $v^*(A,W)\triangleq \lim_{D\rightarrow +\infty} R(D)$ to indicate its dependency on $A$ and $W$.
From (\ref{ltisdp}), we have
\vspace{-0.5ex}
\begin{align}
& v^*(A,W)= \label{valprimal} \\
& \begin{cases} \inf\limits_{P, \Pi}  \quad \tfrac{1}{2} \log\det \Pi^{-1} + \tfrac{1}{2} \log \det W \\
 \text{ s.t.}  \quad  \Pi \!\succ\!  0,    P \!\preceq\! A P A^\top \!+\!W,  \left[\!\! \begin{array}{cc}P\!-\!\Pi \!\!\! &\!\! PA^\top \\
AP \!\!\!&\!\! A PA^\top \!+\!W \end{array}\!\!\right]\! \succeq\! 0. \end{cases} \nonumber
\end{align}
Due to the strict feasibility, Slater's constraint qualification \cite{boyd2009} guarantees that the duality gap is zero. Thus, we have an alternative representation of $v^*(A,W)$ using the dual problem of (\ref{valprimal}).
\vspace{-0.5ex}
\begin{align}
& v^*(A,W)= \label{valdual} \\
& \begin{cases}\sup\limits_{X,Y}  \quad \tfrac{1}{2} \log\det X_{11}\!-\!\tfrac{1}{2}\text{Tr}(X_{22}\!+\!Y)W + \tfrac{1}{2}\log\det W \!+\! \tfrac{n}{2} \\
 \text{ s.t.}  \quad  A^\top YA\!-\!Y\!+\!X_{11}\!+\!X_{12}A\!+\!A^\top X_{21}\!+\!A^\top X_{22}A \preceq 0,  \\
  \hspace{6ex}Y \succeq 0, X=\left[\!\! \begin{array}{cc}X_{11} & X_{12} \\ X_{21} & X_{22}\end{array}\!\!\right]\! \succeq\! 0.\end{cases} \nonumber
\end{align}
The primal problem (\ref{valprimal}) can be also rewritten as
\begin{align}
&v^*(A,W) \nonumber \\
&=\begin{cases}
\inf\limits_{P} \quad \frac{1}{2}\log\det (APA^\top\!+\!W)-\frac{1}{2}\log\det P \\
\text{s.t.}\quad P \preceq APA^\top+W, P\in\mathbb{S}_{++}^n 
\end{cases} \label{RinfAW1}\\
&=
\begin{cases}
\inf\limits_{P,C,V}  \quad -\frac{1}{2}\log\det (I\!-\!V^{-\frac{1}{2}}CPC^\top V^{-\frac{1}{2}}) \\
\text{ s.t.}\quad P^{-1}-(APA^\top+W)^{-1}=C^\top V^{-1}C \\
\hspace{6ex}P\in\mathbb{S}_{++}^n, V\in\mathbb{S}_{++}^n, C\in\mathbb{R}^{n\times n}. \label{RinfAW2}
\end{cases}
\end{align}
To see that (\ref{RinfAW1}) and (\ref{RinfAW2}) are equivalent, note that the feasible set of $P$ in (\ref{RinfAW1}) and (\ref{RinfAW2}) are the same. Also
\vspace{-0.5ex}
\begin{align*}
&\tfrac{1}{2}\log\det(APA^\top+W)-\tfrac{1}{2}\log\det P \\
&=-\tfrac{1}{2}\log\det(APA^\top+W)^{-1}-\tfrac{1}{2}\log\det P \\
&=-\tfrac{1}{2}\log\det(P^{-1}-C^\top V^{-1}C)-\tfrac{1}{2}\log\det P \\
&=-\tfrac{1}{2}\log\det(I-P^{\frac{1}{2}}C^\top V^{-1}CP^{\frac{1}{2}}) \\
&=-\tfrac{1}{2}\log\det(I-V^{-\frac{1}{2}}C PC^\top V^{-\frac{1}{2}})
\end{align*}
The last step follows from Sylvester's determinant theorem. 

\subsubsection{Case 1: When all eigenvalues of $A$ satisfy $|\lambda_i|\geq 1$}
We first show that if all eigenvalues of $A$ are outside the open unit disc, then $v^*(A,W)=\sum_{\lambda_i\in\sigma (A)} \log |\lambda_i|$, where $\sigma (A)$ is the set of all eigenvalues of $A$ counted with multiplicity.
To see that $v^*(A,W) \leq \sum_{\lambda_i\in\sigma (A)} \log |\lambda_i|$, note that the value $\sum_{\lambda_i\in\sigma (A)} \log |\lambda_i|+\epsilon$ with arbitrarily small $\epsilon >0$ can be attained by $P=kI$ in (\ref{RinfAW1}) with sufficiently large $k>0$. To see that $v^*(A,W) \geq \sum_{\lambda_i\in\sigma (A)} \log |\lambda_i|$, note that the value $\sum_{\lambda_i\in\sigma (A)} \log |\lambda_i|$ is attained by the dual problem (\ref{valdual}) with $X=[A \;\;-I]^\top W^{-1}[A \;\;-I]$ and $Y=0$.

\subsubsection{Case 2: \mbox{When all eigenvalues of $A$ satisfy $|\lambda_i|<1$}}
In this case, we have $v^*(A,W)=0$. 
The fact that $v^*(A,W)\geq 0$ is immediate from the expression (\ref{RinfAW1}). To see that $v^*(A,W)=0$, consider $P=P^*$ in (\ref{RinfAW1}) where $P^*\succ 0$ is the unique solution to the Lyapunov equation $P^*=AP^*A^\top+W$.

\subsubsection{Case 3: General case}
In what follows, we assume without loss of generality that $A$ has a structure (e.g., a Jordan form) 
\vspace{-1ex}
$$A=\left[\begin{array}{cc}A_1 & 0 \\ 0 & A_2\end{array}\right]$$ where all eigenvalues of $A_1\in\mathbb{R}^{n_1\times n_1}$ satisfy $|\lambda_i|\geq 1$ and all eigenvalues of $A_2\in\mathbb{R}^{n_2\times n_2}$ satisfy $|\lambda_i|< 1$.
We first recall the following basic property of the algebraic Riccati equation.
\vspace{-1ex}
\begin{lemma}
\label{lemriccatimonotone}Suppose $V\succ 0$ and $(A,C)$ is a detectable pair and $0\prec W_1 \preceq W_2$. Then, we have $\tilde{P}\preceq \tilde{Q}$ where  $\tilde{P}$ and $\tilde{Q}$ are the unique positive definite solutions to 
\begin{align}
A\tilde{P}A^\top\!-\!\tilde{P}\!-\!A\tilde{P}C^\top (C\tilde{P}C^\top\!+\!V)^{-1}C\tilde{P}A^\top\!+\!W_1&\!=\!0  \label{eqriccatip}\\
A\tilde{Q}A^\top\!-\!\tilde{Q}\!-\!A\tilde{Q}C^\top (C\tilde{Q}C^\top\!+\!V)^{-1}C\tilde{Q}A^\top\!+\!W_2&\!=\!0.  \label{eqriccatiq}
\end{align}
\end{lemma}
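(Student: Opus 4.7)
The result is the classical monotonicity of the discrete algebraic Riccati equation (DARE) with respect to the process noise covariance, and my plan is to prove it by analyzing the associated Riccati iteration. Define the Riccati operator
\[
\mathcal{R}_W(X)\triangleq AXA^\top-AXC^\top(CXC^\top+V)^{-1}CXA^\top+W,
\]
so that \eqref{eqriccatip}–\eqref{eqriccatiq} read $\tilde P=\mathcal{R}_{W_1}(\tilde P)$ and $\tilde Q=\mathcal{R}_{W_2}(\tilde Q)$. Using the matrix inversion lemma on the first three terms, I would rewrite
\[
\mathcal{R}_W(X)=A\bigl(X^{-1}+C^\top V^{-1}C\bigr)^{-1}A^\top+W,
\]
which makes two structural properties transparent: (i) \emph{monotonicity in the state argument}, $X_1\preceq X_2\Rightarrow \mathcal{R}_W(X_1)\preceq \mathcal{R}_W(X_2)$, from the antitonicity of matrix inversion on the positive-definite cone, and (ii) \emph{monotonicity in the noise}, $W_1\preceq W_2\Rightarrow \mathcal{R}_{W_1}(X)\preceq \mathcal{R}_{W_2}(X)$.

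Next, I would generate the standard Riccati iterations $P_{k+1}=\mathcal{R}_{W_1}(P_k)$ and $Q_{k+1}=\mathcal{R}_{W_2}(Q_k)$ from the common initialization $P_0=Q_0=0$. A straightforward induction combining (i) and (ii) gives $P_k\preceq Q_k$ for every $k\geq 0$. Because $W_i\succ 0$, the pair $(A,W_i^{1/2})$ is (trivially) stabilizable, and by hypothesis $(A,C)$ is detectable; under these assumptions the iteration from $P_0=0$ is nondecreasing in the Loewner order, bounded above, and converges to the unique positive-definite stabilizing solution of the corresponding DARE (this is the standard DARE convergence theorem, e.g.\ \cite[Ch.~14]{kailath2000linear}). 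Hence $P_k\uparrow\tilde P$ and $Q_k\uparrow\tilde Q$, and passing to the limit in $P_k\preceq Q_k$ yields $\tilde P\preceq\tilde Q$, as required.

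\textbf{Main obstacle.} The monotonicity steps (i) and (ii) are immediate from the inversion-lemma rewrite; the only non-routine ingredient is justifying that the Riccati iteration from $P_0=0$ actually converges to the unique positive-definite DARE solution, rather than to some other fixed point or diverging. That is precisely where detectability of $(A,C)$ (to guarantee uniqueness and an upper bound) and the positivity of $W_i$ (to guarantee stabilizability of the noise-shaping pair and strict positivity of the limit) enter; I would quote the standard convergence result rather than re-derive it. Once convergence is invoked, the Loewner inequality follows from the monotone nature of the iteration and the preservation of order under pointwise limits in the p.s.d.\ cone.
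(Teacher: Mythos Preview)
Your proposal is correct and follows essentially the same route as the paper: both run the Riccati recursions with a common initialization, use monotonicity of the Riccati map in both the state and the noise term to maintain $P_k\preceq Q_k$, and then pass to the limit via the standard convergence theorem under detectability. The only cosmetic differences are that the paper initializes at a common positive-definite matrix rather than at $0$, and cites monotonicity directly rather than deriving it via the inversion-lemma rewrite; your formulation is slightly more explicit but otherwise identical in spirit.
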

\begin{proof}
Consider Riccati recursions
\begin{align}
&\tilde{P}_{t+1}\!=\!A\tilde{P}_t A^\top \!\!-\! A\tilde{P}_tC^\top(C\tilde{P}_tC^\top\!\!\!+\!V)^{-1}C\tilde{P}_tA^\top\!+\!W_1  \label{ricrecp} \\
&\tilde{Q}_{t+1}\!=\!A\tilde{Q}_t A^\top \!\!\!-\! A\tilde{Q}_tC^\top(C\tilde{Q}_tC^\top\!\!\!+\!V)^{-1}C\tilde{Q}_tA^\top\!\!\!+\!W_2  \label{ricrecq}
\end{align}
with $\tilde{P}_0=\tilde{Q}_0 \succ 0$. 
Since (RHS of (\ref{ricrecp})) $\preceq$ (RHS of (\ref{ricrecq})) for every $t$, we have $\tilde{P}_t \preceq \tilde{Q}_t$ for every $t$ (see also \cite[Lemma 2.33]{kumar1986stochastic} for the monotonicity of the Riccati recursion). 
Under the detectability assumption, we have $\tilde{P}_t\rightarrow \tilde{P}$ and $\tilde{Q}_t\rightarrow \tilde{Q}$ as $t\rightarrow +\infty$ \cite[Theorem 14.5.3]{kailath2000linear}.
Thus $\tilde{P}\preceq \tilde{Q}$.
\end{proof}
Using the above lemma, we obtain the following result.
\begin{lemma}
\label{lemvmonotone}
$0\prec W_1 \preceq W_2$, then $v^*(A, W_1)\leq v^*(A, W_2)$.
\end{lemma}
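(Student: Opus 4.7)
My plan is to couple the two minimizations defining $v^*(A, W_1)$ and $v^*(A, W_2)$ through a common virtual-sensor pair $(C, V)$, and then lift the monotonicity of the filtering Riccati fixed point in the driving noise (Lemma~\ref{lemriccatimonotone}) to a monotonicity of the value function itself.

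First I would use the reformulation (\ref{RinfAW2}): for any $(C, V)$ with $V \succ 0$ and $(A, C)$ a detectable pair, the equality constraint $P^{-1}-(APA^\top+W)^{-1}=C^\top V^{-1}C$ identifies $P$ with the Kalman a~posteriori covariance $P=(\tilde P^{-1}+C^\top V^{-1}C)^{-1}$, where $\tilde P=\tilde P(C,V,W)$ is the unique positive-definite solution of the filtering algebraic Riccati equation in (\ref{eqriccatip}). The Woodbury/Sylvester identities already exploited to pass from (\ref{RinfAW1}) to (\ref{RinfAW2}) reduce the objective to $f(C,V,W):=\tfrac{1}{2}\log\det\bigl(I+V^{-1}C\,\tilde P(C,V,W)\,C^\top\bigr)$, so that $v^*(A,W)=\inf_{(C,V)}f(C,V,W)$ over admissible detectable pairs.

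Next, I would apply Lemma~\ref{lemriccatimonotone} pointwise in $(C,V)$: the hypothesis $0\prec W_1\preceq W_2$ immediately gives $\tilde P(C,V,W_1)\preceq\tilde P(C,V,W_2)$, and composing with the PSD-monotone map $X\mapsto\log\det(I+V^{-1}CXC^\top)$ yields $f(C,V,W_1)\leq f(C,V,W_2)$. Taking the infimum over $(C,V)$ on both sides preserves the inequality and delivers $v^*(A,W_1)\leq v^*(A,W_2)$.

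The step I expect to be the main obstacle is justifying that the infimum in $v^*(A,W)$ can indeed be restricted to \emph{detectable} pairs $(A,C)$. For non-detectable pairs the filtering ARE has no stabilizing positive-definite solution, so Lemma~\ref{lemriccatimonotone} does not apply directly. I would bridge this gap by an approximation argument: augment any candidate $(C,V)$ to $(\hat C_\epsilon,\hat V_\epsilon):=\bigl([C^\top,\epsilon I]^\top,\operatorname{diag}(V,\epsilon^{-1}I)\bigr)$, so that $\hat C_\epsilon$ has full column rank and $(A,\hat C_\epsilon)$ is observable; the resulting signal-to-noise matrix $\hat C_\epsilon^\top\hat V_\epsilon^{-1}\hat C_\epsilon=C^\top V^{-1}C+\epsilon^2 I$ converges to the original one as $\epsilon\downarrow 0$, and continuity of $f$ on the interior of its domain allows the pointwise monotonicity inequality to pass to the limit, completing the proof.
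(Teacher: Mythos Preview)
Your approach is essentially the paper's: both pass to the reformulation~(\ref{RinfAW2}), relate its constraint to the filtering ARE, and invoke Lemma~\ref{lemriccatimonotone}. The paper works with a single (near-)optimal triple $(Q,C,V)$ for $v^*(A,W_2)$ rather than arguing pointwise over all $(C,V)$, but the substantive difference is in how the detectability issue is handled.

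The ``main obstacle'' you identify is in fact a non-issue, and your $\epsilon$-perturbation is unnecessary. The paper observes that detectability of $(A,C)$ is \emph{automatic} from feasibility in~(\ref{RinfAW2}): if $(Q,C,V)$ satisfies the constraint with $Q\succ 0$, set $\tilde Q\triangleq AQA^\top+W_2$; rewriting the constraint via the matrix inversion lemma shows that $\tilde Q$ solves the ARE~(\ref{eqriccatiq}). With $\tilde L\triangleq A\tilde QC^\top(C\tilde QC^\top+V)^{-1}$ this ARE becomes the Lyapunov identity $(A-\tilde LC)\tilde Q(A-\tilde LC)^\top-\tilde Q=-W_2-\tilde L V\tilde L^\top\prec 0$, so $A-\tilde LC$ is Schur stable and $(A,C)$ is detectable. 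Hence the feasible set of~(\ref{RinfAW2}) contains only detectable pairs, and your parametrization $v^*(A,W)=\inf_{(C,V)}f(C,V,W)$ over detectable pairs loses nothing. Once this is noted, your pointwise inequality $f(C,V,W_1)\le f(C,V,W_2)$ and the paper's construction of $P=(\tilde P^{-1}+C^\top V^{-1}C)^{-1}\preceq Q$ from the $W_1$-ARE solution $\tilde P$ are the same argument.
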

\begin{proof}
Due to the characterization (\ref{RinfAW2}) of $v^*(A,W_2)$, there exist $Q\succ 0, V \succ 0, C\in\mathbb{R}^{n \times n}$ such that $v^*(A,W_2)=-\frac{1}{2}\log\det(I\!-\!V^{-\frac{1}{2}}CQC^\top V^{-\frac{1}{2}})$
and
\begin{equation}
\label{eqricqq}
Q^{-1}-(AQA^\top+W_2)^{-1}=C^\top V^{-1}C.
\end{equation}
Setting $\tilde{Q}\triangleq AQA^\top+W_2 \succ 0$, it is elementary to show that (\ref{eqricqq}) implies $\tilde{Q}$ satisfies the algebraic Riccati equation (\ref{eqriccatiq}).
Setting $\tilde{L}\triangleq A\tilde{Q}C^\top (C\tilde{Q}C^\top+V)^{-1}$, (\ref{eqriccatiq}) implies a Lyapunov inequality $(A-\tilde{L}C)\tilde{Q}(A-\tilde{L}C)^\top-\tilde{Q}\prec 0$, showing that $A-\tilde{L}C$ is Schur stable. Hence $(A,C)$ is a detectable pair.
By Lemma~\ref{lemriccatimonotone}, a Riccati equation (\ref{eqriccatip}) admits a positive definite solution $\tilde{P}\preceq \tilde{Q}$. 
Setting $P\triangleq (\tilde{P}^{-1}+C^\top V^{-1}C)^{-1}$, $P$ satisfies 
\vspace{-1ex}
\begin{equation}
\label{eqricpp}
P^{-1}-(A PA^\top+W_1)^{-1}=C^\top V^{-1}C
\end{equation}
Moreover, we have $P\preceq Q$ since
\[
0\prec Q^{-1} = \tilde{Q}^{-1}+C^\top V^{-1}C\preceq \tilde{P}^{-1}+C^\top V^{-1}C=P^{-1}.
\]
Since $P$ satisfies (\ref{eqricpp}), we have thus constructed a feasible solution $(P, C, V)$ that upper bounds $v^*(A,W_1)$. That is,
\begin{align*}
v^*(A,W_2)&=-\tfrac{1}{2}\log\det(I\!-\!V^{-\frac{1}{2}}CQC^\top V^{-\frac{1}{2}}) \\
&\geq -\tfrac{1}{2}\log\det(I\!-\!V^{-\frac{1}{2}}CPC^\top V^{-\frac{1}{2}}) \\
&\geq v^*(A,W_1). 
\end{align*}
\end{proof}

Next, we prove that $v^*(A,W)$ is both upper and lower bounded by $\sum_{\lambda_i\in\sigma(A_1)} \log |\lambda_i|$. To establish an upper bound, note that the following inequalities hold with a sufficiently large $\delta>0$ with $W \preceq \delta I_n$.
\begin{align*}
v^*(A,W)&\leq v^*(A, \delta I_n) \\
&\leq v^*(A_1, \delta I_{n_1}) +v^*(A_2, \delta I_{n_2})  =\sum_{\lambda_i\in\sigma(A_1)} \log |\lambda_i|. 
\end{align*}
Lemma~\ref{lemvmonotone} is used in the first step.
To see the second inequality, consider the primal representation (\ref{valprimal}) of $v^*(A, \delta I_n)$.
If we restrict decision variables to have block-diagonal structures
\[
P=\left[\begin{array}{cc}P_1 & 0 \\ 0& P_2 \end{array}\right], \;\; \Pi=\left[\begin{array}{cc}\Pi_1 & 0 \\ 0& \Pi_2 \end{array}\right]
\] 
according to the partitioning $n=n_1+n_2$, then the original primal problem (\ref{valprimal}) with $(A, \delta I_n)$ is decomposed into a problem in terms of decision variables $(P_1, \Pi_1)$ with data $(A_1, \delta I_{n_1})$ and a problem in terms of decision variables $(P_2,\Pi_2)$ with data $(A_2, \delta I_{n_2})$. 
Due to the additional structural restriction, the sum of $v^*(A_1, \delta I_{n_1})$ and $v^*(A_2, \delta I_{n_2})$ cannot be smaller than $v^*(A, \delta I_n)$.
Finally, by the arguments in Cases 1 and 2, we have $v^*(A_1, \delta I_{n_1})=\sum_{\lambda_i\in\sigma(A_1)} \log |\lambda_i|$ and $v^*(A_2, \delta I_{n_2})=0$.

To establish a lower bound, we show the following inequalities using a sufficiently small $\epsilon>0$ such that $\epsilon I \preceq W$.
\begin{align*}
v^*(A,W)&\geq v^*(A, \epsilon I_n) \\
&\geq v^*(A_1, \epsilon I_{n_1}) +v^*(A_2, \epsilon I_{n_2}) =\sum_{\lambda_i\in\sigma(A_1)} \log |\lambda_i|. 
\end{align*}
The first inequality is due to Lemma~\ref{lemvmonotone}.
To prove the second inequality, consider the dual representation (\ref{valdual}) of $v^*(A, \epsilon I_n)$.
By restricting decision variables $X_{11}, X_{12}, X_{21},X_{22}$ and $Y$ to have block-diagonal structures according to the partitioning $n=n_1+n_2$, the original dual problem is decomposed into two problems of the form (\ref{valdual}) with $(A_1, \epsilon I_{n_1})$ and $(A_2, \epsilon I_{n_2})$.
Since the additional constraints in the dual problem never increase the optimal value, we have the second inequality. Discussions in Cases 1 and 2 are again used in the last step.

\subsection{Proof of Theorem~\ref{theorempartially}}
\label{apppartially}

To prove Theorem~\ref{theorempartially}, we reduce the original problem \eqref{partiallyobservableprob} for partially observable plants to a problem for fully observable plants so that the results obtained in Section~\ref{secmainresult} are applicable. To this end, a key technique is the \emph{innovations approach} \cite{kailath1968innovations}, which has been used in the context of zero-delay rate-distortion theory for a similar purpose \cite{tanaka2015zero}. 

First, observe that the least-mean square error estimate $\tilde{\bx}_t=\mathbb{E}(\bx_t|\by^t, \bu^{t-1})$ can be computed recursively by the pre-Kalman filter (\ref{eqprekf}).
In particular, it is apparent from (\ref{eqprekf}) that the expectation $\mathbb{E}(\bx_t|\by^t, \bu^{t-1})$ can be written as a linear function $\mathcal{L}_t^{KF}$ of $\by^t$ and $\bu^{t-1}$, i.e.,
\begin{equation}
\label{eqLkf}
\int x_t\PP(dx_{t+1}|y^t,u^{t-1})=\mathcal{L}_t^{KF}(y^t,u^{t-1}).
\end{equation}
The Kalman filter is also known as the whitening filter since it has a property that the \emph{innovation} process $\bpsi_t\triangleq \tilde{L}_{t+1}(\by_{t+1}-H_{t+1}\tilde{\bx}_{t+1|t})$ is white. 
\begin{lemma}
The innovation $\bpsi_t$ is a zero-mean, white (i.e., independent of $\tilde{\bx}^t, \bu^t$ and $\bpsi^{t-1}$) Gaussian random variable such that $\bpsi_t\sim\mathcal{N}(0,\Psi_t)$ with $\Psi_t=\tilde{L}_{t+1}(H_{t+1}\tilde{P}_{t+1|t}H_{t+1}^\top+G_{t+1})\tilde{L}_{t+1}^\top$.
\end{lemma}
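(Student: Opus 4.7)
The plan is to unpack the definition of $\bpsi_t$ and exploit the standard orthogonality properties of the least-mean-square estimate $\tilde{\bx}_{t+1|t}$.

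First, I would substitute the sensor equation $\by_{t+1}=H_{t+1}\bx_{t+1}+\bg_{t+1}$ into the definition to obtain
\begin{equation*}
\bpsi_t = \tilde{L}_{t+1}\bigl(H_{t+1}(\bx_{t+1}-\tilde{\bx}_{t+1|t})+\bg_{t+1}\bigr).
\end{equation*}
Since both the one-step prediction error $\bx_{t+1}-\tilde{\bx}_{t+1|t}$ and the sensor noise $\bg_{t+1}$ are zero-mean Gaussian (the former because the system is linear, the initial state is zero-mean Gaussian, and all noises are Gaussian), and because $\bg_{t+1}$ is independent of $\bx_{t+1}-\tilde{\bx}_{t+1|t}$, the random variable $\bpsi_t$ is zero-mean Gaussian with covariance
\begin{equation*}
\tilde{L}_{t+1}\bigl(H_{t+1}\tilde{P}_{t+1|t}H_{t+1}^\top + G_{t+1}\bigr)\tilde{L}_{t+1}^\top = \Psi_t,
\end{equation*}
where $\tilde{P}_{t+1|t}=\mathrm{Cov}(\bx_{t+1}-\tilde{\bx}_{t+1|t})$ by construction of the pre-Kalman filter.

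Next I would prove the whiteness claim. By the defining property of the least-mean-square estimate, the prediction error $\bx_{t+1}-\tilde{\bx}_{t+1|t}$ is orthogonal to every Borel-measurable affine function of $(\by^{t+1},\bu^t)$, and in particular to $\tilde{\bx}^t$ and to each $\bpsi^{t-1}$ (since these are linear functions of $(\by^t,\bu^{t-1})$ through the pre-Kalman recursion). The sensor noise $\bg_{t+1}$ is by assumption independent of $\bw^t,\bg^t,\bx_1$, and hence of $(\bx^{t+1},\by^t,\bu^t)$ whenever $\bu_s, s\le t$ is measurable with respect to past observations (which is the case along any causal policy). Combining these facts, $\bpsi_t$ is orthogonal to $(\tilde{\bx}^t,\bu^t,\bpsi^{t-1})$. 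Since the full tuple $(\tilde{\bx}^t,\bu^t,\bpsi^{t-1},\bpsi_t)$ is jointly Gaussian\footnote{Joint Gaussianity follows from the linearity of (\ref{eqsystem}), (\ref{eqprekf}) and the Gaussianity of all exogenous variables, provided $\bu_t$ is obtained via a linear policy, as is the case in the class $\Gamma_0$ relevant to Theorem~\ref{theorempartially}.}, orthogonality upgrades to statistical independence, which yields the white Gaussian property of $\{\bpsi_t\}$.

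The step I expect to require the most care is the joint-Gaussianity argument justifying the passage from orthogonality to independence, because a priori the control $\bu_t$ is generated by an arbitrary policy $\gamma\in\Gamma$. The cleanest way to handle this is to restrict attention to the class $\Gamma_0$ of three-stage linear-Gaussian policies (within which the optimality claim of Theorem~\ref{theorempartially} is eventually established via the same reduction as in Section~\ref{secderivation}); on that class all the signals $(\bx^t,\by^t,\bu^t,\tilde{\bx}^t,\bpsi^{t-1},\bpsi_t)$ are joint affine transformations of the primitive Gaussian noises $(\bx_1,\bw^t,\bg^{t+1},\bv^t)$, and joint Gaussianity is automatic. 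The remaining orthogonality statements are then routine consequences of the projection theorem for Hilbert-space-valued random variables.
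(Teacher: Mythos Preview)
The paper does not prove this lemma; it simply cites \cite{simon2006optimal}, Section~10.1, so your direct argument already goes beyond what the paper provides. The covariance computation is fine, but your treatment of the independence claim has a genuine weakness.

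Your workaround of restricting to the linear-Gaussian class $\Gamma_0$ so that joint Gaussianity is available is circular in the context of Theorem~\ref{theorempartially}. The innovation lemma is invoked in Appendix~\ref{apppartially} precisely to rewrite the pre-Kalman filter as $\tilde{\bx}_{t+1}=A_t\tilde{\bx}_t+B_t\bu_t+\bpsi_t$ and thereby reduce \eqref{equivalentprob2} to the fully observable problem \eqref{equivalentprob3}; at that stage the optimization still ranges over \emph{all} modified policies $\tilde{\gamma}\in\tilde{\Gamma}$, and the restriction to a three-stage linear policy occurs only afterwards, via Theorem~\ref{maintheorem} applied to \eqref{equivalentprob3}. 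So whiteness must hold for arbitrary causal policies, not just for $\Gamma_0$.

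The clean fix avoids the orthogonality-plus-joint-Gaussianity route entirely. The one-step prediction error $\bx_{t+1}-\tilde{\bx}_{t+1|t}$ obeys an autonomous linear recursion driven only by $(\bx_1,\bw^t,\bg^t)$ (the control $\bu_t$ cancels when forming $\bx_{t+1}-\tilde{\bx}_{t+1|t}$), and a standard induction shows that its conditional law given $(\by^t,\bu^t)$ is $\mathcal{N}(0,\tilde{P}_{t+1|t})$ for \emph{any} causal policy---this is the classical fact that the Kalman filter delivers the exact conditional mean even under nonlinear feedback. Since $\bg_{t+1}$ is independent of $(\by^t,\bu^t)$ and of the prediction error, the conditional law of $\bpsi_t=\tilde{L}_{t+1}\bigl(H_{t+1}(\bx_{t+1}-\tilde{\bx}_{t+1|t})+\bg_{t+1}\bigr)$ given $(\by^t,\bu^t)$ is $\mathcal{N}(0,\Psi_t)$, which does not depend on the conditioning variables; hence $\bpsi_t$ is independent of $(\by^t,\bu^t)$ and therefore of $(\tilde{\bx}^t,\bu^t,\bpsi^{t-1})$. (A small slip: the prediction error is orthogonal to functions of $(\by^t,\bu^t)$, not $(\by^{t+1},\bu^t)$ as you wrote.)
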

\begin{proof}See, e.g., \cite[Section 10.1]{simon2006optimal}.
\end{proof}
The next observation is important to show that the pre-Kalman filter can be always introduced without loss of performance.
\begin{lemma}
\label{lemprekfinv}
The pre-Kalman filter (\ref{eqprekf}) is causally invertible; that is, for every $t=1,\cdots,T$, $(\by^t,\bu^{t-1})$ can be reconstructed from  $(\tilde{\bx}^t,\bu^{t-1})$.
\end{lemma}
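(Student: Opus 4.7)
The plan is to prove the lemma by strong induction on $t$, exhibiting at each step an explicit algebraic inversion of the pre-Kalman recursion \eqref{eqprekf}. The key structural observation is that each gain matrix $\tilde{L}_t = \tilde{P}_{t|t-1}H_t^\top(H_t\tilde{P}_{t|t-1}H_t^\top+G_t)^{-1}$ is a left-invertible $n\times m$ matrix, so its Moore--Penrose pseudoinverse $\tilde{L}_t^\dagger = (\tilde{L}_t^\top \tilde{L}_t)^{-1}\tilde{L}_t^\top$ recovers the innovation $\by_t - H_t\tilde{\bx}_{t|t-1}$ exactly from $\tilde{L}_t(\by_t-H_t\tilde{\bx}_{t|t-1})$.

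First I would establish full column rank of $\tilde{L}_t$. The argument is: (i) $\tilde{P}_{t|t-1}\succ 0$ for every $t$, which follows inductively from $\tilde{P}_{1|0}=P_{1|0}\succ 0$, the update $\tilde{P}_{t|t}=(I-\tilde{L}_t H_t)\tilde{P}_{t|t-1}\succeq 0$, and the prediction step $\tilde{P}_{t+1|t}=A_t\tilde{P}_{t|t}A_t^\top+W_t\succ 0$ (since $W_t\succ 0$); (ii) since $H_t$ has full row rank by assumption, $H_t\tilde{P}_{t|t-1}H_t^\top\succ 0$ and hence $H_t\tilde{P}_{t|t-1}H_t^\top+G_t$ is positive definite, so invertible; (iii) together these imply $\tilde{L}_t=\tilde{P}_{t|t-1}H_t^\top(H_t\tilde{P}_{t|t-1}H_t^\top+G_t)^{-1}$ has rank $m$, i.e.\ full column rank.

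For the base case $t=1$, the initial condition $\tilde{\bx}_{1|0}=0$ reduces \eqref{eqprekf} to $\tilde{\bx}_1=\tilde{L}_1\by_1$, from which $\by_1=\tilde{L}_1^\dagger\tilde{\bx}_1$ is a measurable function of $\tilde{\bx}_1$ alone. For the inductive step, suppose $(\by^{t-1},\bu^{t-2})$ has been reconstructed from $(\tilde{\bx}^{t-1},\bu^{t-2})$. Then $\tilde{\bx}_{t|t-1}=A_{t-1}\tilde{\bx}_{t-1}+B_{t-1}\bu_{t-1}$ is computable from $(\tilde{\bx}^{t-1},\bu^{t-1})$; rearranging the measurement-update equation yields $\tilde{L}_t(\by_t-H_t\tilde{\bx}_{t|t-1})=\tilde{\bx}_t-\tilde{\bx}_{t|t-1}$, so applying $\tilde{L}_t^\dagger$ and adding back $H_t\tilde{\bx}_{t|t-1}$ recovers $\by_t$ from $(\tilde{\bx}^t,\bu^{t-1})$.

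The only nontrivial ingredient is the rank analysis of $\tilde{L}_t$, which hinges on the standing assumptions $P_{1|0}\succ 0$, $W_t\succ 0$, and $H_t$ having full row rank; everything else is a direct algebraic rewriting of the Kalman recursion. Once invertibility is in hand, the induction produces explicit linear maps $\by^t = \mathcal{L}_t^{KF,-1}(\tilde{\bx}^t,\bu^{t-1})$, establishing the claim.
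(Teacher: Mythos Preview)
Your proposal is correct and follows essentially the same approach as the paper: both arguments hinge on the full column rank of $\tilde{L}_t$ (so that the Moore--Penrose pseudoinverse $\tilde{L}_t^\dagger=(\tilde{L}_t^\top\tilde{L}_t)^{-1}\tilde{L}_t^\top$ exists) and then algebraically invert the measurement-update equation to recover $\by_t$ from $(\tilde{\bx}_t,\tilde{\bx}_{t-1},\bu_{t-1})$. Your inversion formula $\by_t=\tilde{L}_t^\dagger(\tilde{\bx}_t-\tilde{\bx}_{t|t-1})+H_t\tilde{\bx}_{t|t-1}$ is precisely the paper's expression after using $\tilde{L}_t^\dagger\tilde{L}_t=I$; the only difference is that you spell out the induction and the positivity chain $\tilde{P}_{t|t-1}\succ 0$ in more detail than the paper's terse one-line justification.
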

\begin{proof}
Since we are assuming $W_t\succ 0$ and $H_t$ has full row rank, $\tilde{L}_t$ has full column rank. Thus $\tilde{L}_t^\dagger \triangleq (\tilde{L}_t^\top \tilde{L}_t)^{-1}\tilde{L}_t^\top$ exists for every $t=1,\cdots, T$. 
From (\ref{eqprekf}), it is clear that $\by_t$ can be constructed by
\begin{equation}
\label{eqprekfinv}
\by_t=\tilde{L}_t^\dagger \tilde{\bx}_t+\tilde{L}_t^\dagger (\tilde{L}_t H_t-I)(A_{t-1}\tilde{\bx}_{t-1}+B_{t-1}\bu_{t-1}).
\end{equation}
\end{proof}

 \ifdefined\DOUBLECOLUMN	
\begin{figure}[t]
    \centering
    \includegraphics[width=0.7\columnwidth]{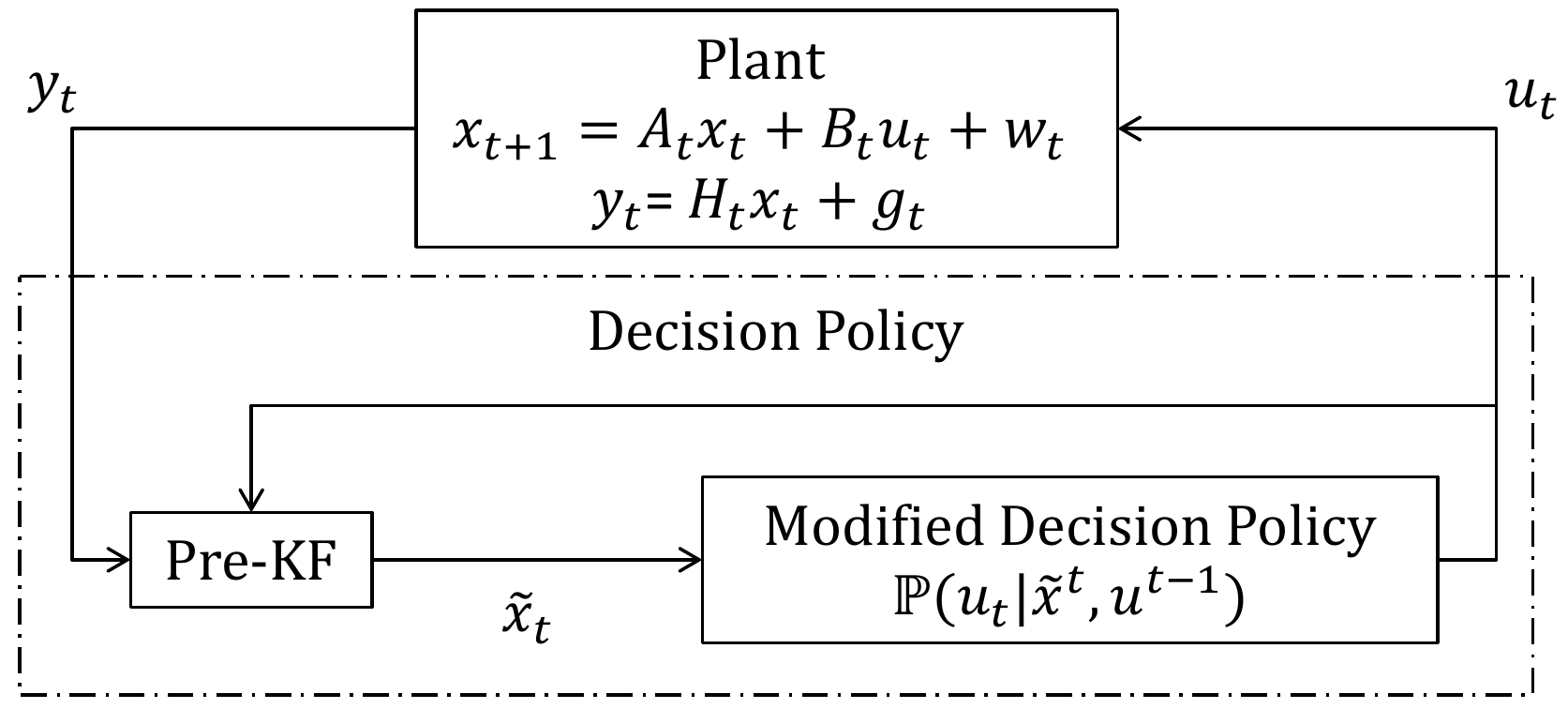}
    \caption{The space of original decision policy is parametrized by the modified decision policies.}
    \label{fig:preKF}
    \vspace{-2ex}
\end{figure}
\begin{figure}[t]
    \centering
    \includegraphics[width=0.7\columnwidth]{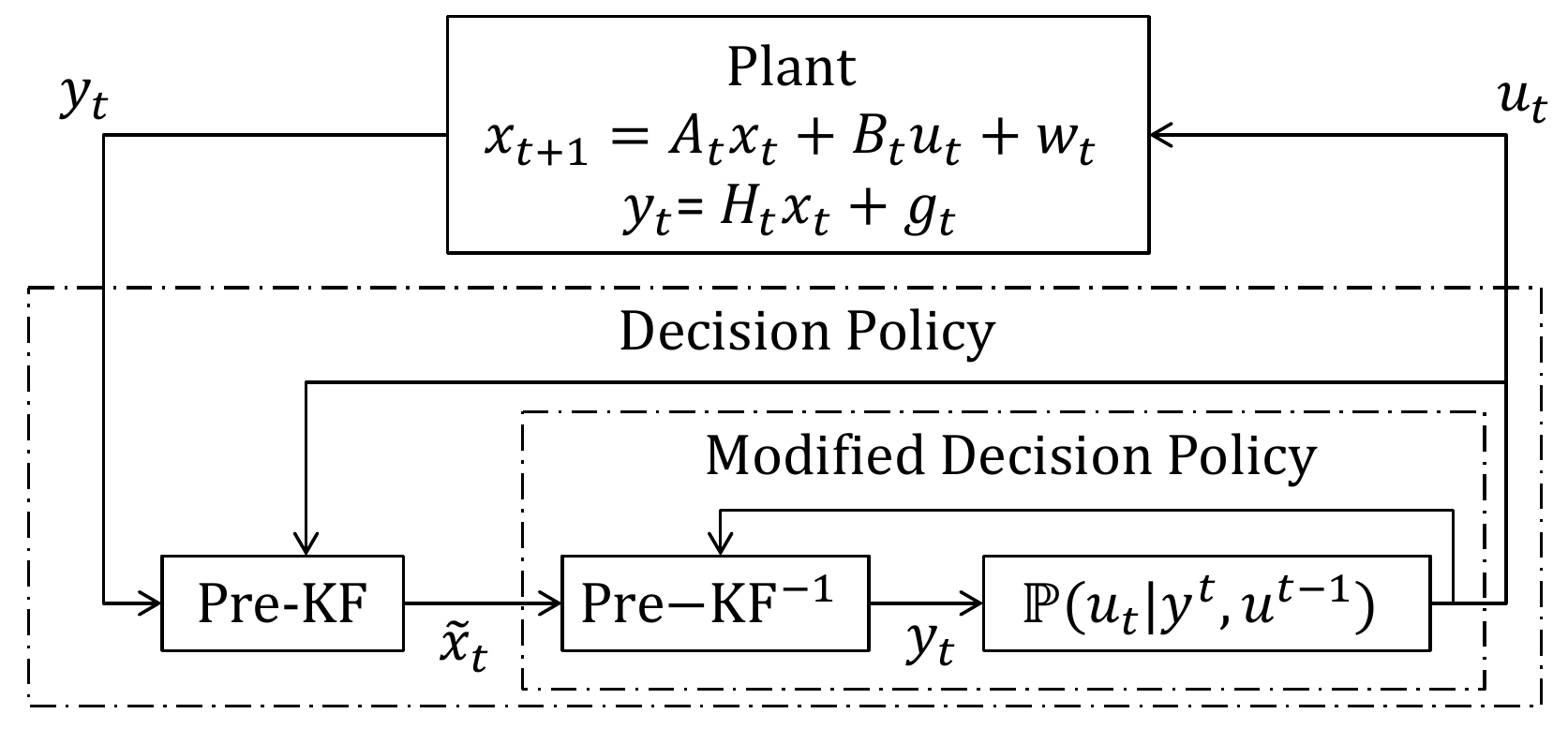}
    \caption{Proof of Lemma~\ref{propprekf}.}
    \label{fig:preKFinv}
    \vspace{-2ex}
\end{figure}
\fi
 \ifdefined\SINGLECOLUMN	
\begin{figure}[t]
    \centering
    \includegraphics[width=0.6\columnwidth]{preKF3.pdf}
    \caption{The space of original decision policy is parametrized by the modified decision policies.}
    \label{fig:preKF}
\end{figure}
\begin{figure}[t]
    \centering
    \includegraphics[width=0.6\columnwidth]{preKFinv3.pdf}
    \caption{Proof of Lemma~\ref{propprekf}.}
    \label{fig:preKFinv}
\end{figure}
\fi 
\begin{lemma}
\label{propprekf}
Without loss of performance, one can assume a decision architecture in Fig.~\ref{fig:preKF}, where the ``Pre-KF" block represents the pre-Kalman filter (\ref{eqprekf}).
\end{lemma}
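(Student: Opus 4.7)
The plan is to establish the lemma by exhibiting a bijective correspondence between the original policy class $\Gamma=\{\PP(u^T\|y^T)\}$ and the class of ``modified'' policies acting on the pre-Kalman filter output $\tilde{\bx}^T$, in such a way that both criteria of problem~\eqref{partiallyobservableprob}, namely the LQG cost $J_\gamma$ and the directed information $I_\gamma(\by^T\rightarrow\bu^T)$, are preserved under the correspondence. Specifically, given any $\gamma\in\Gamma$ with kernels $\{\PP(u_t|y^t,u^{t-1})\}_{t=1}^T$, I would define a modified policy by
\[
\tilde{\PP}(u_t|\tilde{x}^t,u^{t-1}) \;\triangleq\; \PP(u_t|y^t,u^{t-1}),
\]
where $y^t$ is the sequence obtained by applying the causal inversion formula~\eqref{eqprekfinv} recursively, starting from the deterministic initialization $\tilde{\bx}_{1|0}=0$. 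Lemma~\ref{lemprekfinv} guarantees that this reconstruction is well-defined and measurable. Conversely, any modified policy composed with the pre-Kalman filter~\eqref{eqprekf} produces an element of $\Gamma$, so the two classes parametrize the same set of joint distributions on $(\bx^{T+1},\by^T,\bu^T)$.

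Next, I would verify that the correspondence leaves both cost functionals invariant. The LQG cost depends only on the marginal law of $(\bx^{T+1},\bu^T)$, which is unchanged since the pre-Kalman filter is a deterministic measurable function of $(\by^t,\bu^{t-1})$ and does not alter the physical feedback loop. For the directed information, the key observation is that, conditional on $\bu^{t-1}$, the map $\by^t\mapsto\tilde{\bx}^t$ is a measurable bijection with inverse given by iterating~\eqref{eqprekfinv}. The standard invariance of (conditional) mutual information under measurable bijections therefore yields
\[
I(\by^t;\bu_t|\bu^{t-1}) \;=\; I(\tilde{\bx}^t;\bu_t|\bu^{t-1})\quad \forall\, t=1,\ldots,T,
\]
and summing these equalities over $t$ gives $I_\gamma(\by^T\rightarrow\bu^T)=I_{\tilde{\gamma}}(\tilde{\bx}^T\rightarrow\bu^T)$. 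Thus infima coincide, which is precisely the ``without loss of performance'' claim.

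The main obstacle will be the measure-theoretic verification that the conditional mutual information is indeed preserved under the recursive bijection~\eqref{eqprekfinv}. One has to check that the reconstructed sequence $y^t(\tilde{x}^t,u^{t-1})$ is Borel measurable at each step (which follows from the linear form of~\eqref{eqprekfinv} and the existence of the pseudoinverse $\tilde{L}_t^\dagger$ established in the proof of Lemma~\ref{lemprekfinv}), and that the bijection holds pointwise rather than merely almost everywhere. Once these technicalities are settled, the lemma reduces the partially observable problem~\eqref{partiallyobservableprob} to an equivalent problem in which the decision variable is a policy from $\tilde{\bx}^T$ to $\bu^T$, paving the way to invoke the fully observable theory of Section~\ref{secmainresult} on the innovation-driven dynamics of $\tilde{\bx}_t$.
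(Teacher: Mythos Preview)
Your proposal is correct and takes essentially the same approach as the paper: both rely on the causal invertibility of the pre-Kalman filter (Lemma~\ref{lemprekfinv}) to construct, for each original policy $\gamma=\PP(u^T\|y^T)$, an equivalent modified policy by composing $\gamma$ with the inverse filter $\text{Pre-KF}^{-1}$. The only difference is packaging: the paper's proof is a brief constructive sketch (insert $\text{Pre-KF}^{-1}$ in front of $\gamma$ and note that $\text{Pre-KF}\circ\text{Pre-KF}^{-1}\circ\gamma\equiv\gamma$), deferring the directed-information identity $I(\by^t;\bu_t|\bu^{t-1})=I(\tilde{\bx}^t;\bu_t|\bu^{t-1})$ to a separate step after Lemma~\ref{propxtilde}, whereas you fold that identity into the present argument via the bijection-invariance of conditional mutual information.
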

\begin{proof}
Suppose $\gamma=\PP(u^T\|y^T)$ is an optimal solution to the original problem \eqref{partiallyobservableprob}. 
Construct a modified decision policy block in Figure~\ref{fig:preKF} as shown in Figure~\ref{fig:preKFinv} using the optimal policy $\gamma=\PP(u^T\|y^T)$ for the original problem, where the ``$\text{Pre-KF}^{-1}$" block represents the causal inverse (\ref{eqprekfinv}) of the pre-Kalman filter. 
Then, the ``decision policy" block in Fig.~\ref{fig:preKFinv} is equivalent to the ``decision policy" block in the original problem in Figure~\ref{fig:poprob}.
Thus, we have shown by construction that there exists a modified decision policy with which the ``decision policy'' block in Figure~\ref{fig:preKF} attains optimality in the the original problem \eqref{partiallyobservableprob}.
\end{proof}

\begin{lemma}
\label{propxtilde}
For every $t=1,\cdots, T$,
\[
\mathbb{E}\|\bx_{t+1}\|_{Q_t}^2=\text{Tr}(Q_t\tilde{P}_{t+1|t+1})+\mathbb{E}\|\tilde{\bx}_{t+1}\|_{Q_t}^2.
\]
\end{lemma}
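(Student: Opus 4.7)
The plan is to use the standard orthogonal decomposition of the state into its pre-Kalman filter estimate plus the estimation error, and then exploit the orthogonality principle to eliminate the cross term in the expanded quadratic form.

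First, I would define the estimation error $\tilde{\be}_{t+1} \triangleq \bx_{t+1} - \tilde{\bx}_{t+1}$. By construction of the pre-Kalman filter \eqref{eqprekf} together with the Gaussianity of $\bx_1$, $\bw_t$, $\bg_t$, the estimate $\tilde{\bx}_{t+1}$ equals the conditional mean $\mathbb{E}(\bx_{t+1}\mid \by^{t+1},\bu^t)$, and $\tilde{P}_{t+1|t+1} = \mathbb{E}[\tilde{\be}_{t+1}\tilde{\be}_{t+1}^\top]$ is the corresponding error covariance matrix. Consequently the orthogonality principle gives $\mathbb{E}[\tilde{\be}_{t+1}\mid \by^{t+1},\bu^t]=0$.

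Second, I would expand the quadratic form pointwise:
\[
\|\bx_{t+1}\|_{Q_t}^2 = \|\tilde{\bx}_{t+1}\|_{Q_t}^2 + 2\,\tilde{\bx}_{t+1}^\top Q_t\,\tilde{\be}_{t+1} + \|\tilde{\be}_{t+1}\|_{Q_t}^2,
\]
take expectations, and treat the three terms separately. The cross term vanishes by the tower property: since $\tilde{\bx}_{t+1}$ is measurable with respect to $(\by^{t+1},\bu^t)$,
\[
\mathbb{E}\bigl[\tilde{\bx}_{t+1}^\top Q_t\tilde{\be}_{t+1}\bigr] = \mathbb{E}\bigl[\tilde{\bx}_{t+1}^\top Q_t\,\mathbb{E}(\tilde{\be}_{t+1}\mid\by^{t+1},\bu^t)\bigr]=0.
\]
For the remaining quadratic term, the trace identity gives $\mathbb{E}\|\tilde{\be}_{t+1}\|_{Q_t}^2 = \operatorname{Tr}\bigl(Q_t\,\mathbb{E}[\tilde{\be}_{t+1}\tilde{\be}_{t+1}^\top]\bigr) = \operatorname{Tr}(Q_t\tilde{P}_{t+1|t+1})$, yielding the claimed identity.

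There is essentially no serious obstacle here; the only point worth checking carefully is that the orthogonality principle remains valid despite the feedback loop created by the decision policy acting on $\tilde{\bx}^t$. This is fine because the pre-Kalman filter depends linearly on $\by^t$ and $\bu^{t-1}$, the control $\bu_t$ is adapted to $(\by^t,\bu^{t-1})$, and the Gaussian-plus-Gaussian innovations structure makes $\tilde{\bx}_{t+1}$ coincide with the true conditional mean $\mathbb{E}(\bx_{t+1}\mid \by^{t+1},\bu^t)$, so the $L^2$ projection interpretation applies without modification.
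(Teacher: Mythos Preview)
Your proposal is correct and follows essentially the same approach as the paper: both expand $\|\bx_{t+1}\|_{Q_t}^2$ via the decomposition $\bx_{t+1}=\tilde{\bx}_{t+1}+(\bx_{t+1}-\tilde{\bx}_{t+1})$, identify the error-squared term with $\operatorname{Tr}(Q_t\tilde{P}_{t+1|t+1})$, and kill the cross term using that $\tilde{\bx}_{t+1}$ is $(\by^{t+1},\bu^t)$-measurable together with $\mathbb{E}(\bx_{t+1}\mid\by^{t+1},\bu^t)=\tilde{\bx}_{t+1}$. The paper writes the conditioning argument out as an explicit iterated integral (using the notation $\mathcal{L}_{t+1}^{KF}$ from \eqref{eqLkf}) rather than invoking the tower property abstractly, but the content is identical.
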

\begin{proof}
Observe 
\begin{align*}
&\mathbb{E}\|\bx_{t+1}\|_{Q_t}^2=\mathbb{E}\|\bx_{t+1}-\tilde{\bx}_{t+1}+\tilde{\bx}_{t+1}\|_{Q_t}^2 \\
&=\mathbb{E}\|\bx_{t+1}\!-\!\tilde{\bx}_{t+1}\|_{Q_t}^2\!+\!\mathbb{E}\|\tilde{\bx}_{t+1}\|_{Q_t}^2 \!+\!2\mathbb{E}\tilde{\bx}_{t+1}^\top Q_t(\bx_{t+1}\!-\!\tilde{\bx}_{t+1}).
\end{align*}
Since $\mathbb{E}\|\bx_{t+1}-\tilde{\bx}_{t+1}\|_{Q_t}^2=\text{Tr}(Q_t\tilde{P}_{t+1|t+1})$, it suffices to prove $\mathbb{E}\tilde{\bx}_{t+1}^\top Q_t(\bx_{t+1}-\tilde{\bx}_{t+1})=0$. This can be directly verified as in (\ref{bigintegral}).
\begin{align}
&\mathbb{E}\left\{\tilde{\bx}_{t+1}^\top Q_t(\bx_{t+1}-\tilde{\bx}_{t+1}) \right\} \nonumber \\
&=\!\int \tilde{x}_{t+1}^\top Q_t (x_{t+1}-\tilde{x}_{t+1})\PP(dx_{t+1},dy^{t+1},du^t) \nonumber\\
&=\!\int \mathcal{L}_{t+1}^{KF}(y^{t+1}\!\!,u^t)^{\top} \!Q_t \!\left(x_{t+1}\!-\!\mathcal{L}_{t+1}^{KF}(y^{t+1}\!\!,u^t)\right) \nonumber \\
& \hspace{35ex} \times \PP(dx_{t+1},dy^{t+1}\!\!\!,du^t) \nonumber\\ 
&=\!\int  \mathcal{L}_{t+1}^{KF}(y^{t+1},u^t)^\top Q_t \nonumber \\
& \times \left(\int\! \left(x_{t+1}\!-\!\mathcal{L}_{t+1}^{KF}(y^{t+1}\!\!\!,u^t)\right)\! \PP(dx_{t+1}|y^{t+1}\!\!\!,u^t) \right) \PP(dy^{t+1}\!\!\!,du^t) \nonumber\\
&=\!\int  \mathcal{L}_{t+1}^{KF}(y^{t+1},u^t)^\top Q_t \nonumber \\
& \times \left(\!\int \!x_{t+1}\PP(dx_{t+1}|y^{t+1}\!\!\!,u^t)\!-\!\mathcal{L}_{t+1}^{KF}(y^{t+1}\!\!\!,u^t) \right) \PP(dy^{t+1}\!\!\!,du^t) \nonumber \\
&=0 \label{bigintegral}
\end{align}
The last step is due to (\ref{eqLkf}).
\end{proof}

Finally, we are ready to reduce the original problem 
\eqref{partiallyobservableprob} for partially observable plants to a problem for fully observable plants.
Let $\tilde{\gamma}=\PP(u^T \|\tilde{x}^T)$  be a modified decision policy in Fig.~\ref{fig:preKF}, and let $\tilde{\Gamma}$ be the space of such policies. Notice that if a policy $\tilde{\gamma}\in\tilde{\Gamma}$ is fixed, then the system equation (\ref{eqsystem}) and the pre-Kalman filter equation (\ref{eqprekf}) uniquely define a joint probability measure $\PP(x^{T+1},y^T,\tilde{x}^T,u^T)$. Expectation and the mutual information with respect to this probability measure will be denoted by $\mathbb{E}_{\tilde{\gamma}}$ and $I_{\tilde{\gamma}}$.
By Lemma~\ref{propprekf}, our main optimization problem (\ref{mainprob}) can be equivalently written as
\begin{subequations}
\label{equivalentprob1}
\begin{align}
\min_{\tilde{\gamma}\in\tilde{\Gamma}} & \quad  \sum\nolimits_{t=1}^T I_{\tilde{\gamma}} (\by^t; \bu_t|\bu^{t-1}) \\
\text{ s.t. } &\quad \sum\nolimits_{t=1}^T \mathbb{E}_{\tilde{\gamma}}\left(\|\bx_{t+1}\|_{Q_t}^2+\|\bu_t\|_{R_t}^2 \right) \leq D.
\end{align}
\end{subequations}
By Lemma \ref{lemprekfinv}, given $\bu^{t-1}$, $\by^t$ can be reconstructed from $\tilde{\bx}^t$ and \emph{vice versa}. Thus, we have
$
I_{\tilde{\gamma}} (\by^t; \bu_t|\bu^{t-1})=I_{\tilde{\gamma}} (\tilde{\bx}^t; \bu_t|\bu^{t-1}).
$
Therefore, using Lemma~\ref{propxtilde}, problem (\ref{equivalentprob1}) can be further rewritten as
\begin{align}
\min_{\tilde{\gamma}\in\tilde{\Gamma}} &   \sum\nolimits_{t=1}^T I_{\tilde{\gamma}} (\tilde{\bx}^t; \bu_t|\bu^{t-1}) \label{equivalentprob2} \\
\text{ s.t. } & \sum\nolimits_{t=1}^T \!\!\!\!\mathbb{E}_{\tilde{\gamma}}\!\left(\|\tilde{\bx}_{t+1}\|_{Q_t}^2\!\!\!+\!\|\bu_t\|_{R_t}^2 \right) +\!\text{Tr}(Q_t\tilde{P}_{t+1|t+1}) \!\leq \! D. \nonumber
\end{align}
Now, notice that the terms $\text{Tr}(Q_t\tilde{P}_{t+1|t+1})$ do not depend on $\tilde{\gamma}$.
Thus, by rewriting the pre-Kalman filter (\ref{eqprekf}) equation as
\begin{equation}
\label{eqxtilde}
\tilde{\bx}_{t+1}=A_t\tilde{\bx}_t+B_t\bu_t+\bpsi_t
\end{equation}
and considering (\ref{eqxtilde}) as a new ``system" with white Gaussian process noise $\bpsi_t\sim\mathcal{N}(0,\Psi_t)$, problem (\ref{equivalentprob2}) can be written as
\begin{subequations}
\label{equivalentprob3}
\begin{align}
\min_{\tilde{\gamma}\in\tilde{\Gamma}} & \quad  I_{\tilde{\gamma}}(\tilde{\bx}^T\rightarrow \bu^T) \\
\text{ s.t. } &\quad J_{\tilde{\gamma}} (\tilde{\bx}^{T+1},\bu^T) \leq \tilde{D}
\end{align}
\end{subequations}
where $\tilde{D}=D-\sum_{t=1}^T \text{Tr}(Q_t\tilde{P}_{t+1|t+1})$.
Since the state $\tilde{X}_t$ of (\ref{eqxtilde}) is fully observable by the modified control policy $\tilde{\gamma}$, (\ref{equivalentprob3}) is now the problem for fully observable systems considered in Section~\ref{secmainresult}.








	\bibliographystyle{ieeetran}
	\bibliography{refs}

\begin{thebibliography}{10}
\providecommand{\url}[1]{#1}
\csname url@samestyle\endcsname
\providecommand{\newblock}{\relax}
\providecommand{\bibinfo}[2]{#2}
\providecommand{\BIBentrySTDinterwordspacing}{\spaceskip=0pt\relax}
\providecommand{\BIBentryALTinterwordstretchfactor}{4}
\providecommand{\BIBentryALTinterwordspacing}{\spaceskip=\fontdimen2\font plus
\BIBentryALTinterwordstretchfactor\fontdimen3\font minus
  \fontdimen4\font\relax}
\providecommand{\BIBforeignlanguage}[2]{{%
\expandafter\ifx\csname l@#1\endcsname\relax
\typeout{** WARNING: IEEEtran.bst: No hyphenation pattern has been}%
\typeout{** loaded for the language `#1'. Using the pattern for}%
\typeout{** the default language instead.}%
\else
\language=\csname l@#1\endcsname
\fi
#2}}
\providecommand{\BIBdecl}{\relax}
\BIBdecl

\bibitem{nair2007}
G.~N. Nair, F.~Fagnani, S.~Zampieri, and R.~J. Evans, ``Feedback control under
  data rate constraints: An overview,'' \emph{Proceedings of the IEEE},
  vol.~95, no.~1, pp. 108--137, 2007.

\bibitem{hespanha2007survey}
J.~P. Hespanha, P.~Naghshtabrizi, and Y.~Xu, ``A survey of recent results in
  networked control systems,'' \emph{Proceedings of the IEEE}, vol.~95, no.~1,
  pp. 138--162, 2007.

\bibitem{baillieul2007}
J.~Baillieul and P.~J. Antsaklis, ``Control and communication challenges in
  networked real-time systems,'' \emph{Proceedings of the IEEE}, vol.~95,
  no.~1, pp. 9--28, 2007.

\bibitem{yuksel2013stochastic}
S.~Y{\"u}ksel and T.~Ba{\c{s}}ar, \emph{Stochastic networked control systems},
  ser. Systems \& Control Foundations \& Applications.\hskip 1em plus 0.5em
  minus 0.4em\relax New York, NY: Springer, 2013, vol.~10.

\bibitem{you2015analysis}
K.~You, N.~Xiao, and L.~Xie, \emph{Analysis and Design of Networked Control
  Systems}, ser. Communications and Control Engineering.\hskip 1em plus 0.5em
  minus 0.4em\relax Springer London, 2015.

\bibitem{TatikondaThesis}
S.~Tatikonda, ``Control under communication constraints,'' \emph{PhD thesis,
  Massachusetts Institute of Technology}, 2000.

\bibitem{baillieul2002feedback}
J.~Baillieul, ``Feedback coding for information-based control: operating near
  the data-rate limit,'' \emph{The 41st IEEE Conference on Decision and Control
  (CDC)}, 2002.

\bibitem{hespanha2002towards}
J.~Hespanha, A.~Ortega, and L.~Vasudevan, ``Towards the control of linear
  systems with minimum bit-rate,'' \emph{The 15th International Symposium on
  Mathematical Theory of Networks and Systems (MTNS)}, pp. 1--15, 2002.

\bibitem{nair2004stabilizability}
G.~N. Nair and R.~J. Evans, ``Stabilizability of stochastic linear systems with
  finite feedback data rates,'' \emph{SIAM Journal on Control and
  Optimization}, vol.~43, no.~2, pp. 413--436, 2004.

\bibitem{nair2004topological}
G.~N. Nair, R.~J. Evans, I.~M. Mareels, and W.~Moran, ``Topological feedback
  entropy and nonlinear stabilization,'' \emph{IEEE Transactions on Automatic
  Control}, vol.~49, no.~9, pp. 1585--1597, 2004.

\bibitem{borkar1997lqg}
V.~S. Borkar and S.~K. Mitter, ``{LQG} control with communication
  constraints,'' \emph{Communications, Computation, Control, and Signal
  Processing}, pp. 365--373, 1997.

\bibitem{tatikonda2004}
S.~Tatikonda, A.~Sahai, and S.~Mitter, ``Stochastic linear control over a
  communication channel,'' \emph{IEEE Transactions on Automatic Control},
  vol.~49, no.~9, pp. 1549--1561, 2004.

\bibitem{matveev2004problem}
A.~S. Matveev and A.~V. Savkin, ``The problem of {LQG} optimal control via a
  limited capacity communication channel,'' \emph{Systems \& Control Letters},
  vol.~53, no.~1, pp. 51--64, 2004.

\bibitem{charalambous2008}
C.~D. Charalambous and A.~Farhadi, ``{LQG} optimality and separation principle
  for general discrete time partially observed stochastic systems over finite
  capacity communication channels,'' \emph{Automatica}, vol.~44, no.~12, pp.
  3181--3188, 2008.

\bibitem{fu2009linear}
M.~Fu, ``Linear {Q}uadratic {G}aussian control with quantized feedback,''
  \emph{The 2009 American Control Conference (ACC)}, 2009.

\bibitem{you2011linear}
K.~You and L.~Xie, ``Linear {Q}uadratic {G}aussian control with quantised
  innovations {K}alman filter over a symmetric channel,'' \emph{IET Control
  Theory \& Applications}, vol.~5, no.~3, pp. 437--446, 2011.

\bibitem{freudenberg2011}
J.~S. Freudenberg, R.~H. Middleton, and J.~H. Braslavsky, ``Minimum variance
  control over a {G}aussian communication channel,'' \emph{IEEE Transactions on
  Automatic Control}, vol.~56, no.~8, pp. 1751--1765, 2011.

\bibitem{bao2011iterative}
L.~Bao, M.~Skoglund, and K.~H. Johansson, ``Iterative encoder-controller design
  for feedback control over noisy channels,'' \emph{IEEE Transactions on
  Automatic Control}, vol.~56, no.~2, pp. 265--278, 2011.

\bibitem{yuksel2014jointly}
S.~Yuksel, ``Jointly optimal {LQG} quantization and control policies for
  multi-dimensional systems,'' \emph{IEEE Transactions on Automatic Control},
  vol.~59, no.~6, 2014.

\bibitem{huang2005finite}
M.~Huang, G.~N. Nair, and R.~J. Evans, ``Finite horizon {LQ} optimal control
  and computation with data rate constraints,'' \emph{The 44th IEEE Conference
  on Decision and Control (CDC)}, 2005.

\bibitem{lemmon2006performance}
M.~D. Lemmon and R.~Sun, ``Performance-rate functions for dynamically quantized
  feedback systems,'' \emph{The 45th IEEE Conference onDecision and Control
  (CDC)}, 2006.

\bibitem{silva2011framework}
E.~Silva, M.~S. Derpich, and J.~Ostergaard, ``A framework for control system
  design subject to average data-rate constraints,'' \emph{IEEE Transactions on
  Automatic Control}, vol.~56, no.~8, pp. 1886--1899, 2011.

\bibitem{silva2011achievable}
E.~Silva, M.~S. Derpich, and J.~{\O}stergaard, ``An achievable data-rate region
  subject to a stationary performance constraint for {LTI} plants,'' \emph{IEEE
  Transactions on Automatic Control}, vol.~56, no.~8, pp. 1968--1973, 2011.

\bibitem{silva2013characterization}
E.~Silva, M.~Derpich, J.~Ostergaard, and M.~Encina, ``A characterization of the
  minimal average data rate that guarantees a given closed-loop performance
  level,'' \emph{IEEE Transactions on Automatic Control}, 2015.

\bibitem{iglesias2001analogue}
P.~Iglesias, ``An analogue of {B}ode's integral for stable nonlinear systems:
  {R}elations to entropy,'' \emph{The 40th IEEE Conference on Decision and
  Control (CDC)}, 2001.

\bibitem{zang2003nonlinear}
G.~Zang and P.~A. Iglesias, ``Nonlinear extension of {B}ode's integral based on
  an information-theoretic interpretation,'' \emph{Systems \& Control Letters},
  vol.~50, no.~1, pp. 11--19, 2003.

\bibitem{elia2004bode}
N.~Elia, ``When {B}ode meets {S}hannon: Control-oriented feedback communication
  schemes,'' \emph{IEEE Transactions on Automatic Control}, vol.~49, no.~9, pp.
  1477--1488, 2004.

\bibitem{martins2008}
N.~C. Martins and M.~A. Dahleh, ``Feedback control in the presence of noisy
  channels: ``{B}ode-like" fundamental limitations of performance,'' \emph{IEEE
  Transactions on Automatic Control}, vol.~53, no.~7, pp. 1604--1615, 2008.

\bibitem{marko1973bidirectional}
H.~Marko, ``The bidirectional communication theory -- {A} generalization of
  information theory,'' \emph{IEEE Transactions on Communications}, vol.~21,
  no.~12, pp. 1345--1351, 1973.

\bibitem{massey1990causality}
J.~Massey, ``Causality, feedback and directed information,''
  \emph{International Symposium on Information Theory and Its Applications
  (ISITA)}, pp. 27--30, 1990.

\bibitem{kramer2003capacity}
G.~Kramer, ``Capacity results for the discrete memoryless network,'' \emph{IEEE
  Transactions on Information Theory}, vol.~49, no.~1, pp. 4--21, 2003.

\bibitem{gourieroux1987kullback}
C.~Gourieroux, A.~Monfort, and E.~Renault, ``Kullback causality measures,''
  \emph{Annales d'Economie et de Statistique}, pp. 369--410, 1987.

\bibitem{amblard2011directed}
P.-O. Amblard and O.~J. Michel, ``On directed information theory and {G}ranger
  causality graphs,'' \emph{Journal of computational neuroscience}, vol.~30,
  no.~1, pp. 7--16, 2011.

\bibitem{jiao2015justification}
J.~Jiao, T.~A. Courtade, K.~Venkat, and T.~Weissman, ``Justification of
  logarithmic loss via the benefit of side information,'' \emph{IEEE
  Transactions on Information Theory}, vol.~61, no.~10, pp. 5357--5365, 2015.

\bibitem{bertsekas1978stochastic}
D.~P. Bertsekas and S.~E. Shreve, \emph{Stochastic optimal control: The
  discrete time case}.\hskip 1em plus 0.5em minus 0.4em\relax Academic Press
  New York, 1978, vol. 139.

\bibitem{massey2005conservation}
J.~L. Massey and P.~C. Massey, ``Conservation of mutual and directed
  information,'' \emph{IEEE International Symposium on Information Theory
  (ISIT)}, 2005.

\bibitem{CoverThomas}
T.~M. Cover and J.~A. Thomas, \emph{Elements of Information Theory}.\hskip 1em
  plus 0.5em minus 0.4em\relax New York, NY, USA: Wiley-Interscience, 1991.

\bibitem{extendedversion}
T.~Tanaka, K.~H. Johansson, T.~Oechtering, H.~Sandberg, and M.~Skoglund, ``Rate
  of prefix-free codes in {LQG} control systems,'' \emph{IEEE International
  Symposium on Information Theory (ISIT)}, 2016.

\bibitem{derpich2013fundamental}
M.~S. Derpich, E.~I. Silva, and J.~{\O}stergaard, ``Fundamental inequalities
  and identities involving mutual and directed informations in closed-loop
  systems,'' \emph{arXiv preprint arXiv:1301.6427}, 2013.

\bibitem{zamir1992universal}
R.~Zamir and M.~Feder, ``On universal quantization by randomized
  uniform/lattice quantizers,'' \emph{IEEE Transactions on Information Theory},
  vol.~38, no.~2, 1992.

\bibitem{tanaka2016optimal}
T.~Tanaka, K.~H. Johansson, and M.~Skoglund, ``Optimal block length for
  data-rate minimization in networked {LQG} control,'' \emph{The 6th IFAC
  Workshop on Distributed Estimation and Control in Networked Systems
  (NecSys)}, pp. 133--138, 2016.

\bibitem{you2011minimum}
K.~You and L.~Xie, ``Minimum data rate for mean square stabilizability of
  linear systems with {M}arkovian packet losses,'' \emph{IEEE Transactions on
  Automatic Control}, vol.~56, no.~4, pp. 772--785, 2011.

\bibitem{witsenhausen1971separation}
H.~S. Witsenhausen, ``Separation of estimation and control for discrete time
  systems,'' \emph{Proceedings of the IEEE}, vol.~59, no.~11, pp. 1557--1566,
  1971.

\bibitem{1503.01848}
T.~Tanaka and H.~Sandberg, ``{SDP}-based joint sensor and controller design for
  information-regularized optimal {LQG} control,'' \emph{The 54th IEEE
  Conference on Decision and Control (CDC)}, 2015.

\bibitem{braslavsky2007feedback}
J.~H. Braslavsky, R.~H. Middleton, and J.~S. Freudenberg, ``Feedback
  stabilization over signal-to-noise ratio constrained channels,'' \emph{IEEE
  Transactions on Automatic Control}, vol.~52, no.~8, pp. 1391--1403, 2007.

\bibitem{1411.7632}
T.~Tanaka, K.-K. Kim, P.~A. Parrilo, and S.~K. Mitter, ``Semidefinite
  programming approach to {G}aussian sequential rate-distortion trade-offs,''
  \emph{IEEE Transactions on Automatic Control (To appear)}, 2014.

\bibitem{charalambous2014nonanticipative}
C.~D. Charalambous, P.~A. Stavrou, and N.~U. Ahmed, ``Nonanticipative rate
  distortion function and relations to filtering theory,'' \emph{IEEE
  Transactions on Automatic Control}, vol.~59, no.~4, pp. 937--952, 2014.

\bibitem{rezaei2006rate}
F.~Rezaei, N.~Ahmed, and C.~D. Charalambous, ``Rate distortion theory for
  general sources with potential application to image compression,''
  \emph{International Journal of Applied Mathematical Sciences}, vol.~3, no.~2,
  pp. 141--165, 2006.

\bibitem{charalambous2014optimization}
C.~D. Charalambous and P.~A. Stavrou, ``Optimization of directed information
  and relations to filtering theory,'' \emph{The 2014 European Control
  Conference (ECC)}, 2014.

\bibitem{csiszar1974extremum}
I.~Csisz{\'a}r, ``On an extremum problem of information theory,'' \emph{Studia
  Scientiarum Mathematicarum Hungarica}, vol.~9, no.~1, pp. 57--71, 1974.

\bibitem{sims2003implications}
C.~A. Sims, ``Implications of rational inattention,'' \emph{Journal of monetary
  Economics}, vol.~50, no.~3, pp. 665--690, 2003.

\bibitem{shafieepoorfard2013rational}
E.~Shafieepoorfard and M.~Raginsky, ``Rational inattention in scalar {LQG}
  control,'' \emph{The 52nd IEEE Conference on Decision and Control (CDC)},
  2013.

\bibitem{stavrou2016filtering}
P.~A. Stavrou, T.~Charalambous, and C.~D. Charalambous, ``Filtering with
  fidelity for time-varying {G}auss-{M}arkov processes,'' \emph{The 55th IEEE
  Conference on Decision and Control (CDC)}, 2016.

\bibitem{toh1999sdpt3}
K.-C. Toh, M.~J. Todd, and R.~H. T{\"u}t{\"u}nc{\"u}, ``{SDPT}3 —- {A} {MATLAB}
  software package for semidefinite programming, version 1.3,''
  \emph{Optimization methods and software}, vol.~11, no. 1-4, pp. 545--581,
  1999.

\bibitem{lofberg2004yalmip}
J.~L{\"o}fberg, ``{YALMIP}: {A} toolbox for modeling and optimization in
  {MATLAB},'' \emph{The 2004 IEEE International Symposium on Computer Aided
  Control Systems Design}, 2004.

\bibitem{harville1997matrix}
D.~A. Harville, \emph{Matrix algebra from a statistician's perspective}.\hskip
  1em plus 0.5em minus 0.4em\relax Springer, 1997, vol.~1.

\bibitem{folland1999real}
G.~Folland, \emph{Real analysis: modern techniques and their
  applications}.\hskip 1em plus 0.5em minus 0.4em\relax Hoboken, NJ, USA: John
  Wiley \& Sons, 1999.

\bibitem{kim2010feedback}
Y.-H. Kim, ``Feedback capacity of stationary gaussian channels,'' \emph{IEEE
  Transactions on Information Theory}, vol.~56, no.~1, pp. 57--85, 2010.

\bibitem{kailath2000linear}
T.~Kailath, A.~Sayed, and B.~Hassibi, \emph{Linear Estimation}.\hskip 1em plus
  0.5em minus 0.4em\relax Prentice-Hall information and system sciences series,
  2000.

\bibitem{srdstationary}
T.~Tanaka, ``Semidefinite representation of sequential rate-distortion function
  for stationary {G}auss-{M}arkov processes,'' \emph{The 2015 IEEE
  Multi-Conference on Systems and Control (MSC)}, 2015.

\bibitem{boyd2009}
S.~Boyd and L.~Vandenberghe, \emph{Convex optimization}.\hskip 1em plus 0.5em
  minus 0.4em\relax Cambridge university press, 2009.

\bibitem{kumar1986stochastic}
P.~R. Kumar and P.~Varaiya, \emph{Stochastic systems: estimation,
  identification and adaptive control}.\hskip 1em plus 0.5em minus 0.4em\relax
  Prentice-Hall, 1986.

\bibitem{kailath1968innovations}
T.~Kailath, ``An innovations approach to least-squares estimation--{Part I}:
  Linear filtering in additive white noise,'' \emph{IEEE Transactions on
  Automatic Control}, vol.~13, no.~6, pp. 646--655, 1968.

\bibitem{tanaka2015zero}
T.~Tanaka, ``Zero-delay rate-distortion optimization for partially observable
  {G}auss-{M}arkov processes,'' \emph{The 52nd IEEE Conference on Decision and
  Control (CDC)}, 2015.

\bibitem{simon2006optimal}
D.~Simon, \emph{Optimal state estimation: Kalman, H-infinity, and nonlinear
  approaches}.\hskip 1em plus 0.5em minus 0.4em\relax John Wiley \& Sons, 2006.

\end{thebibliography}

\ifdefined\SHORTVERSION
\begin{IEEEbiography}[{\includegraphics[width=1in,height=1.25in,clip,keepaspectratio]{Takashi_Tanaka.jpg}}]{Takashi Tanaka}
received the B.S. degree from the University of Tokyo, Tokyo, Japan, in 2006, and the M.S. and Ph.D. degrees in Aerospace Engineering (automatic control) from the University of Illinois at Urbana-Champaign (UIUC), Champaign, IL, USA, in 2009 and 2012, respectively.
From 2012 to 2015, he was a Postdoctoral Associate with the Laboratory for Information and Decision Systems (LIDS) at the Massachusetts Institute of Technology (MIT), Cambridge, MA, USA. Currently, he is a postdoctoral researcher at KTH Royal Institute of Technology, Stockholm, Sweden, where he has been since 2015.
From Fall 2017, he will be an Assistant Professor in the Department of Aerospace Engineering and Engineering Mechanics at the University of Texas at Austin.
His research interests include control theory and its applications; most recently the information-theoretic perspectives of optimal control problems. 
Dr. Tanaka was a recipient of the IEEE Conference on Decision and Control (CDC) best student paper award in 2011.

\end{IEEEbiography}
\begin{IEEEbiography}[{\includegraphics[width=1in,height=1.25in,clip,keepaspectratio]{Peyman.pdf}}]{Peyman Mohajerin Esfahani}
received the B.Sc. and M.Sc. degrees from Sharif University of Technology, Tehran, Iran, in 2005 and 2008, respectively, and the Ph.D. degree from the Automatic Control Laboratory at ETH Zurich, Switzerland, in 2014. 
He is currently an Assistant Professor with the Delft Center for Systems and Control, Delft University of Technology, the Netherlands. Prior to joining TU Delft, he held several research appointments at EPFL, ETH Zurich, and Massachusetts Institute of Technology between 2014 and 2016. His research interests include theoretical and practical aspects of decision-making problems in uncertain and dynamic environments, with applications to control and security of large-scale and distributed systems. He was selected for the Spark Award by ETH Zurich for the 20 best inventions of the year in 2012 and received the SNSF Postdoc Mobility fellowship in 2015. He was one of the three finalists for the Young Researcher Prize in Continuous Optimization awarded by the Mathematical Optimization Society in 2016. He received the 2016 George S. Axelby Outstanding Paper Award from the IEEE Control Systems Society.
\end{IEEEbiography}
\begin{IEEEbiography}[{\includegraphics[width=1in,height=1.25in,clip,keepaspectratio]{sanjoy-mitter.jpg}}]{Sanjoy K. Mitter}
received the Ph.D. degree in Electrical Engineering (automatic control) from the Imperial College, London, U.K., in 1965.
He taught at Case Western Reserve University from 1965 to 1969. He joined the Massachusetts Institute of Technology (MIT), Cambridge, MA, in 1969, where he has been a Professor of electrical engineering since 1973. He was the Director of the MIT Laboratory for Information and Decision Systems from 1981 to 1999. He has also been a Professor of mathematics at the Scuola Normale, Pisa, Italy, from 1986 to 1996. He has held visiting
positions at Imperial College, London; University of Groningen, Holland; INRIA, France; Tata Institute of Fundamental Research, India and ETH, Zurich, Switzerland; and several American universities. He was the McKay Professor at the University of California, Berkeley in March 2000, and held the Russell-Severance-Springer Chair in Fall 2003. His current research interests are communication and control in a networked environment, the relationship of statistical and quantum physics to information theory and control, and autonomy and adaptiveness for integrative organization.
Dr. Mitter was awarded the AACC Richard E. Bellman Control Heritage Award for 2007 and the IEEE Eric E. Sumner Award  in 2015. He is a Member of the National Academy of Engineering. He is the winner of the 2000 IEEE Control Systems Award.
\end{IEEEbiography}
\fi

\end{document}